\documentclass[11 pt]{amsart}
\usepackage{amsmath}
\usepackage{latexsym,amsfonts,amssymb,mathrsfs}
\usepackage{mathdots}
\usepackage{color}
\usepackage [all,2cell,color]{xy}

\usepackage{tikz-cd}
 \usepackage[utf8]{inputenc}
 \DeclareFontFamily{U}{min}{}
 \DeclareFontShape{U}{min}{m}{n}{<-> udmj30}{}

\UseAllTwocells
\usepackage[
textwidth=3cm,
textsize=small,
colorinlistoftodos]
{todonotes}

\usetikzlibrary{positioning} 
\usetikzlibrary{decorations.pathreplacing}
\usetikzlibrary{arrows, decorations.markings} 
\pgfarrowsdeclarecombine{twotriang}{twotriang}{stealth}{stealth}
{stealth}{stealth}
\tikzset{mono/.style={>-stealth}} 
\tikzset{epi/.style={-twotriang}} 
\tikzset{arrow/.style={->}}
\tikzset{arrowshorter/.style={->, shorten <=2pt, shorten >=2pt}}
\tikzset{twoarrowlonger/.style={double,double distance=1.5pt,
shorten <=5pt,shorten >=6pt,
decoration={markings,mark=at position -4pt with {\arrow[scale=1.75]{>}}},
preaction={decorate}}} 

\tikzset{mapstikz/.style={-stealth, 
decoration={markings,mark=at position 0pt with {\arrow[scale=0.5]{|}}}, preaction={decorate}}}
\tikzset{dot/.style={circle,draw,fill,inner sep=1pt}}

\parskip 4pt

\pagestyle{plain}

\theoremstyle{plain}   
\newtheorem{thm}{Theorem}[section] 
\makeatletter\let\c@thm\c@thm\makeatother
\newtheorem{cor}{Corollary}[section]
\makeatletter\let\c@cor\c@thm\makeatother
\newtheorem{lem}{Lemma}[section]
\makeatletter\let\c@lem\c@thm\makeatother
\newtheorem{prop}{Proposition}[section]
\makeatletter\let\c@prop\c@thm\makeatother

\makeatletter\let\c@claim\c@thm\makeatother

\theoremstyle{definition}
\newtheorem{defn}{Definition}[section]
\makeatletter\let\c@defn\c@thm\makeatother
\newtheorem{const}{Construction}[section]
\makeatletter\let\c@const\c@thm\makeatother
\newtheorem{notn}{Notation}[section]
\makeatletter\let\c@notn\c@thm\makeatother

\theoremstyle{remark}
\newtheorem{rmk}{Remark}[section]
\makeatletter\let\c@rmk\c@thm\makeatother
\newtheorem{ex}{Example}[section]
\makeatletter\let\c@ex\c@thm\makeatother

\makeatletter\let\c@observationn\c@thm\makeatother

\makeatletter
\let\c@equation\c@thm
\numberwithin{equation}{section}
\makeatother

\usepackage[capitalise]{cleveref}

\newcommand{\newrefformat}[2]{}

\crefname{lem}{Lemma}{Lemmas}
\crefname{thm}{Theorem}{Theorems}
\crefname{defn}{Definition}{Definitions}
\crefname{notn}{Notation}{Notations}
\crefname{const}{Construction}{Constructions}
\crefname{prop}{Proposition}{Propositions}
\crefname{rmk}{Remark}{Remarks}
\crefname{cor}{Corollary}{Corollaries}
\crefname{equation}{Display}{Displays}
\crefname{ex}{Example}{Examples}


\newcommand{\cC}{\mathcal{C}}
\newcommand{\cD}{\mathcal{D}}

\newcommand{\cH}{\mathcal{H}}

\newcommand{\cP}{\mathcal{P}}

\newcommand{\cS}{\mathcal{S}}
\newcommand{\cT}{\mathcal{T}}

\newcommand{\cV}{\mathcal{V}}
\newcommand{\cW}{\mathcal{W}}

\newcommand{\cat}{\cC\!\mathit{at}}

\newcommand{\set}{\cS\!\mathit{et}}

\newcommand{\sset}{\mathit{s}\set}
\newcommand{\D}{\cD}

\newcommand{\sdot}{S_{\bullet}}
\newcommand{\sdotn}[1]{S_{#1}}
\newcommand{\pcat}{\mathcal{P}}
\newcommand{\epi}{\twoheadrightarrow}
\newcommand{\mono}{\rightarrowtail}

\newcommand{\adc}{\mathcal{DC}at_{aug}}
\newcommand{\pdc}{\mathcal{DC}at_*}
\newcommand{\asdc}{\mathcal{DC}at_{aug}^{st}}
\newcommand{\sdc}{\mathcal{DC}at^{st}}
\newcommand{\dc}{\mathcal{DC}at}
\newcommand{\psdc}{\mathcal{DC}at^{st}_*} 
\newcommand{\untwoseg}{\mathcal{U}2\mathcal{S}eg}

\newcommand{\Sq}{Sq\,}
\newcommand{\Ob}{Ob\,}
\newcommand{\Hor}{Hor\,}
\newcommand{\Ver}{Ver\,}
\newcommand{\Horz}{\cH or_0}
\newcommand{\Verz}{\cV er_0}
\newcommand{\Horo}{\cH or_1}
\newcommand{\Vero}{\cV er_1}
\newcommand{\aug}{A}

\DeclareMathOperator{\id}{id}
\DeclareMathOperator{\Mor}{mor}
\DeclareMathOperator{\Fun}{Fun}

\DeclareMathOperator{\Hom}{Hom}
\DeclareMathOperator{\tr}{tr}

\newcommand{\Pplus}{P^{\vartriangleleft}}
\newcommand{\Pminus}{P^{\vartriangleright}}

\DeclareFontFamily{OT1}{pzc}{}
\DeclareFontShape{OT1}{pzc}{m}{it}{<-> s * [1.10] pzcmi7t}{}
\DeclareMathAlphabet{\mathpzc}{OT1}{pzc}{m}{it}
\newcommand{\R}{\mathbb{R}}

\tikzset{ 
    vnode/.style={circle, radius=2pt, minimum size=4pt, draw, fill, inner sep=0, label={[below,text height=5mm]:#1}}
}

\tikzset{
    boxy/.style={baseline={([yshift=0.5ex]current bounding box.center)}}
}

\usetikzlibrary{shapes}
\tikzset{
    vellipsefirstone/.style={draw, ellipse, minimum width=0.6*#1, minimum height=1.5*#1, blue, thick}
}
 
\tikzset{
    vellipsesecondone/.style={draw, ellipse, minimum width=0.6*#1, minimum height=1.5*#1, orange, thick}
}
\tikzset{
    vellipsethirdone/.style={draw, ellipse, minimum width=0.6*#1, minimum height=1.5*#1, green!40!black!60, thick}
}

\tikzset{
    vellipsefirsttwo/.style={draw, ellipse, minimum width=2*#1, minimum height=1.5*#1, blue, thick}
}
\tikzset{
    vellipsesecondtwo/.style={draw, ellipse, minimum width=2*#1, minimum height=1.5*#1, orange, thick}
}
\tikzset{
    vellipsefirstthree/.style={draw, ellipse, minimum width=3.5*#1, minimum height=1.5*#1, blue, thick}
}

\newdir{ >}{{}*!/-5pt/@{>}}

\begin{document}
\title{2-Segal sets and the Waldhausen construction}

\author{Julia E. Bergner}
\address{Department of Mathematics, University of Virginia, Charlottesville, VA 22904, USA}
\email{bergnerj@member.ams.org}
\author{Ang\'{e}lica M. Osorno}
\address{Department of Mathematics, Reed College, Portland, OR 97202, USA}
\email{aosorno@reed.edu}
\author{Viktoriya Ozornova}
\address{Mathematical Institute, University of Bonn, 53115 Bonn, Germany}
\email{ozornova@math.uni-bonn.de}
\author{Martina Rovelli}
\address{UPHESS BMI FSV, \'{E}cole Polytechnique F\'{e}d\'{e}rale de Lausanne, CH-1015 Lausanne, Switzerland}
\email{martina.rovelli@epfl.ch}
\author{Claudia I. Scheimbauer}
\address{Max Planck Institute for Mathematics, 53111 Bonn, Germany}
\email{scheimbauer@mpim-bonn.mpg.de}

\date{\today}

\subjclass[2010]{55U10, 18G30, 19D10, 18D05}

\keywords{2-Segal spaces, Waldhausen $\sdot$-construction, double categories, partial monoids, cobordism categories}

\thanks{The first-named author was partially supported by NSF CAREER award DMS-1352298. The second-named author was partially supported by a grant from the Simons Foundation (\#359449, Ang\'elica Osorno). The fifth-named author was partially supported by the grant P2EZP2\textunderscore 159113 from the Swiss National Science Foundation.}

\begin{abstract}
It is known by results of Dyckerhoff-Kapranov and of G\'alvez--Carrillo-Kock-Tonks that the output of the Waldhausen $\sdot$-construction has a unital 2-Segal structure.  Here, we prove that a certain $\sdot$-functor defines an equivalence between the category of augmented stable double categories and the category of unital 2-Segal sets.  The inverse equivalence is described explicitly by a path construction.  We illustrate the equivalence for the known examples of partial monoids, cobordism categories with genus constraints and graph coalgebras.
\end{abstract}

\maketitle

In \cite{waldhausen}, Waldhausen gave a definition of the algebraic $K$-theory of certain categories using the $\sdot$-construction. The input categories for this construction, now called Waldhausen categories, have specified cofibrations and weak equivalences, subject to some axioms, and generalize more classical notions such as exact categories.  The essential step is to construct a simplicial space whose $k$th space of simplices is the classifying space of the groupoid of diagrams of a certain shape. 

More recently, there have been many generalizations of this construction using several flavours of $(\infty, 1)$-categories as input, such as \cite{BarKthy}, \cite{BGT}, and \cite{FLP}.
Analyzing the structure of the output of such a construction in detail, Dyckerhoff and Kapranov \cite{DK} and, independently, G\'alvez--Carrillo, Kock, and Tonks \cite{GalvezKockTonks}, realized that for some inputs (e.g.~an exact category), it is not just a simplicial space, but has additional structure which generalizes that of a category up to homotopy.
This structure is referred to as a \emph{unital 2-Segal space} by the former authors, and as a \emph{decomposition space} by the latter.  

Although the two sets of authors come at the definition of unital 2-Segal space from two different perspectives and are motivated by different examples, it is significant that both groups of authors identify the output of the $\sdot$-construction as a key example.
One can therefore ask whether all unital $2$-Segal spaces arise via an $\sdot$-construction for a suitably generalized input category.

In this paper, we restrict ourselves to the discrete case of unital 2-Segal sets, which are certain simplicial sets, rather than simplicial spaces.  We briefly describe this structure in this context; a more precise definition is given in the next section.

A 1-\emph{Segal set} is a simplicial set $X$ such that the Segal maps 
\[ X_n \rightarrow \underbrace{X_1 \times_{X_0} \cdots \times_{X_0} X_1}_n \]
are isomorphisms for $n \geq 2$.  This condition allows us to think of $X$ as having an object set $X_0$, a morphism set $X_1$, and a composition which can be defined by the span
\[ X_1\times_{X_0} X_1 \leftarrow X_2 \to X_1, \]
since the first arrow is an isomorphism.  Indeed, a simplicial set is a 1-Segal set if and only if it is isomorphic to the nerve of a category.

In contrast, a 2-Segal set is a simplicial set $X$ such that certain maps
\[ X_n \rightarrow \underbrace{X_2 \times_{X_1} \cdots \times_{X_1} X_2}_{n-1} \]
are isomorphisms for $n \geq 3$.  In this setting, we still have an object set $X_0$ and a morphism set $X_1$, but we no longer have composition of all morphisms, since the first map in the span
\[ X_1\times_{X_0} X_1 \leftarrow X_2 \to X_1 \] 
is no longer necessarily invertible.  However, we can think of a 2-Segal set as having a multi-valued composition, where an element of $X_1 \times_{X_0} X_1$ is lifted to a preimage in $X_2$, which is in turn sent to its image in $X_1$.  Thus, two potentially composable morphisms could have no composite at all (if the preimage in $X_2$ is empty) or multiple composites (if the preimage has multiple elements).  The invertibility of the 2-Segal maps given above is used to prove that this multi-valued composition is associative.  We think of this structure as that of a \emph{multi-valued category}.   

To understand what the 2-Segal condition does, let us look more precisely at how the maps are defined.  Unlike the case of 1-Segal maps, where for every $n \geq 2$ there is a single map, 2-Segal maps are parametrized by triangulations of regular $(n+1)$-gons for $n\geq 3$. For example, if $X$ is a simplicial set, the two triangulations
\begin{center}
\begin{tikzpicture}[scale=1.5, baseline=(a)]
\node (a) at (0,0.5) {};
\draw (0,0) node[dot] {} node[below left] {0} --  (1,0) node[dot] {} node[below right] {1} -- (1,1) node[dot] {} node[above right] {2} -- (0,1) node[dot] {} node[above left] {3} -- cycle;
\draw (1,0) -- (0,1);
\draw (0.5,-0.5) node {$\mathcal T_1$};
\end{tikzpicture}
\hspace{1cm}\mbox{and}\hspace{1cm}
\begin{tikzpicture}[scale=1.5, baseline=(a)]
\node (a) at (0,0.5) {};
\draw (0,0) node[dot] {} node[below left] {0} --  (1,0) node[dot] {} node[below right] {1} -- (1,1) node[dot] {} node[above right] {2} -- (0,1) node[dot] {} node[above left] {3} -- cycle;
\draw (0,0) -- (1,1);
\draw (0.5,-0.5) node {$\mathcal T_2$};
\end{tikzpicture}
\end{center}
of the square induce two different maps $X_3 \rightarrow X_2 \times_{X_1} X_2$; indeed, different face maps are used to define the pullbacks corresponding to the two triangulations.  The fact that $X_3$ is isomorphic to both pullbacks gives associativity of the partially defined composition.  We specify these maps more precisely in the following section.

We also restrict ourselves to \emph{unital} 2-Segal sets, for which composition with identity morphisms always exists and is unique.  It is a result of Dyckerhoff and Kapranov that the category of unital 2-Segal sets is equivalent to the category of multi-valued categories \cite{DK}.

Now let us consider the $\sdot$-construction in this context. When applied to an exact category, the output of the $S_\bullet$-construction is a simplicial space obtained by taking the geometric realization of the groupoid of diagrams of a certain shape in every degree.  It is a result of both Dyckerhoff-Kapranov and G\'alvez--Carrillo-Kock-Tonks that this simplicial space is $2$-Segal.  Roughly speaking, a $3$-simplex arises from a cartesian square in the exact category, and one of the two 2-Segal maps in degree 3 extracts the cospan 
\[
  \begin{tikzpicture}[scale=0.8]
    \def\l{2cm}
    \begin{scope}
   \draw[fill] (0,0) circle (1pt) node (b0){};
   \draw[fill] (-\l,0) circle (1pt) node (b3){};
   \draw[fill] (0,\l) circle (1pt) node (b1){};

   \draw[arrow] (b1)--node[anchor=west](x01){}(b0);
   \draw[arrow] (b3)--node[anchor=north](x03){}(b0);
   
   \end{scope}
  \end{tikzpicture}
\]
of this square. As the cartesian square is determined up to isomorphism by its cospan, the corresponding 2-Segal map is a weak equivalence.

To get a 2-Segal set, a naive guess would be to take the \emph{set} of all diagrams of the required shape, rather than the classifying space of the groupoid of such diagrams of a particular shape.  Although this construction produces a simplicial set, it is not $2$-Segal. The obstruction boils
down to the fact that the cartesian square that completes a cospan in an exact category is only determined up to isomorphism.  Since this square is not defined uniquely, the 2-Segal map fails to be an isomorphism.

In this paper, we identify the optimal amount of structure so that a discrete version of the $\sdot$-construction can be defined and is a $2$-Segal set. Although exact categories do not fit into this discrete context, they inspire the structure and the properties we are looking for.
For instance, the input object should have a collection of distinguished squares with the property that every cospan can be completed to a distinguished square in a unique way. These ideas lead to the notion of \emph{stable pointed double category}, which we now briefly explain.

A double category is a category internal to categories. More informally, it consists of the following data, subject to some axioms: a set of objects; two different morphism sets which we suggestively call ``horizontal'' and ``vertical'' morphisms; and ``squares'', which have ``horizontal'' source and target morphisms and ``vertical'' source and target morphisms. We depict a square by
\[
  \begin{tikzpicture}[scale=0.8]
    \def\l{2cm}
    \begin{scope}
   \draw[fill] (0,0) circle (1pt) node (b0){};
   \draw[fill] (-\l,\l) circle (1pt) node (b2){};
   \draw[fill] (-\l,0) circle (1pt) node (b3){};
   \draw[fill] (0,\l) circle (1pt) node (b1){};

   \draw[epi] (b1)--node[anchor=west](x01){}(b0);
   \draw[mono] (b2)--node[anchor=south](x12){}(b1);
   \draw[epi] (b2)--node[anchor=east](x23){}(b3);
   \draw[mono] (b3)--node[anchor=north](x03){}(b0);

   \draw[twoarrowlonger] (-0.8*\l, 0.8*\l)--node[above, xshift=0.1cm]{}(-0.2*\l,0.2*\l);
   
   \draw[right of=b0, xshift=-0.75cm, yshift=-0.08cm] node (sc){.};
   \end{scope}
  \end{tikzpicture}
\]

To define a simplicial object via an $\sdot$-construction, we need the input to be \emph{pointed}.  In the framework of exact categories, we assume a zero object, but in the double category setup we only ask for an object $*$ which is initial for the horizontal category and terminal for the vertical category. For such a pointed double category $\mathcal D$ we let $S_n(\mathcal D)$ be the set of diagrams of the form
$$
\raisebox{-0.5\height}{\includegraphics{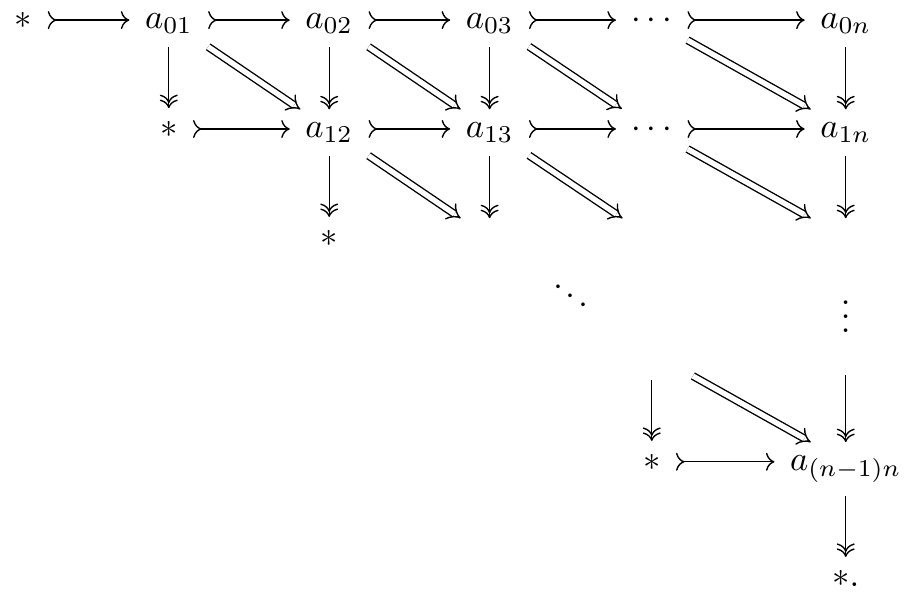}}
$$
As $n$ varies, we obtain a simplicial set whose face maps are given by deleting a row and a column, and composing appropriately.

If the pointed double category $\mathcal D$ is \emph{stable}, meaning that any square is uniquely determined by the span composed by its horizontal and vertical sources, and, simultaneously, by the cospan composed by its horizontal and vertical targets, then $\sdot(\mathcal D)$ is a 2-Segal set.  

Observe that $S_0(\mathcal{D}) = \{*\}$;  we call a 2-Segal set with this property \emph{reduced}. To obtain 2-Segal sets which are not reduced, we replace the singleton set $\{*\}$ by a subset of objects called the \emph{augmentation}.  When taking the $\sdot$-construction, we require the elements along the diagonal to be in the augmentation set.

The definitions of double categories and the conditions on them which we require can be found in \cref{sec:doublecats}, and the $\sdot$-construction is described explicitly in \cref{sec:wald}.
The following is the statement of our main result, \cref{thm:main_thm}. 

\theoremstyle{plain}
\newtheorem*{thm:main}{Main Theorem}
\begin{thm:main}
The generalized $\sdot$-construction defines an equivalence of categories between the category of augmented stable double categories and the category of unital 2-Segal sets. 
\end{thm:main}

The inverse functor can also be described explicitly via a path construction or d\'ecalage functor, as we explain in \cref{sec:path}. 
In this paper,  we illustrate the equivalence of the above theorem by three examples of 2-Segal sets which do not arise naturally from the ordinary $\sdot$-construction. The first one encodes the structure of Segal's partial monoids \cite{Segal}; the second one is borrowed from work of the fifth-named author and Valentino \cite{SV}, and encodes 2-dimensional cobordisms with genus constraints.  The third one is a 2-Segal set associated to a graph, and is a more basic version of a 2-Segal space encoding the combinatorics of graphs, related to the Schmitt coalgebra \cite{GalvezKockTonks_comb}, \cite{Schmitt}. 
We describe these 2-Segal sets in \cref{sec:twosegex}.  In \cref{sec:examples}, we return to these examples and give an explicit description of the associated augmented stable double category that results from applying the path construction functor to each.  

As mentioned earlier, the $\sdot$-construction of an exact category does not fit into the framework described in this paper, because it fails to be stable in the discrete setting. In a future paper, we will establish an equivalence of homotopy theories between unital 2-Segal spaces and double Segal spaces which are augmented and stable in a sense which appropriately generalizes the usage in the current paper.  The latter give an appropriate model for a homotopical version of augmented stable double categories.  

Such structures are likely to arise not only from exact categories, but also from stable $(\infty, 1)$-categories, for which an $\sdot$-construction has been already defined in \cite{BarKthy}, \cite{BGT}, and \cite{FLP}.  We expect our result to recover these known constructions. 
    
\subsection*{Acknowledgements}
We would like to thank the organizers of the Women in Topology II Workshop and the Banff International Research Station for providing a wonderful opportunity for collaborative research.  Conversations with T.\ Dyckerhoff, I.\ G\'alvez--Carrillo, and A.\ Tonks were helpful. The fifth-named author would like to thank A.\ Valentino for conversations which led to the example of cobordisms with genus constraints. We also thank the referee for a detailed report which led to many improvements to this paper.

\section{Some background on 2-Segal sets}

Before introducing 2-Segal sets, we recall the more familiar definition of 1-Segal sets.  

\begin{defn}\label{defn:1Segal}
A simplicial set $X$ is a \emph{1-Segal set} if, for all $n\geq 2$, the map
\[ X_n \rightarrow \underbrace{X_1 \times_{X_0} \cdots \times_{X_0} X_1}_n, \]
induced by the maps $[1]\cong\{i \leq i+1\} \hookrightarrow [n]$ in the category $\Delta$, for all $0 \leq i <n$, is a bijection.
\end{defn}

\begin{rmk} When unspecified, pullbacks of $X_1$ over $X_0$ follow the convention 
$$X_1 \times_{X_0} X_1:=\textrm{lim}\left[X_1\stackrel{d_0}\longrightarrow X_0\stackrel{d_1}\longleftarrow X_1 \right].$$
\end{rmk} 

The maps in \cref{defn:1Segal} can be defined for any simplicial object, not just for simplicial sets, and of particular interest has been the case of simplicial spaces, in which these maps are required to be weak equivalences rather than isomorphisms, and the pullbacks are taken to be homotopy pullbacks.  Rezk calls such simplicial spaces \emph{Segal spaces} in \cite{rezk}, taking the name from similar maps used by Segal in his work on $\Gamma$-spaces in \cite{segalgamma}.  Dyckerhoff and Kapranov use the term \emph{1-Segal spaces} to distinguish them from more general $n$-Segal spaces, and we follow their usage here.   While we restrict ourselves to the case of simplicial sets in this paper, and defer homotopical variants to later work, the following proposition points to the importance of 1-Segal spaces in defining homotopical categories.

\begin{prop}\label{prop:1segalnerve}
A simplicial set is 1-Segal if and only if it is isomorphic to the nerve of a category.
\end{prop}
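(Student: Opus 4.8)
The plan is to prove the two implications separately, in each case making essential use of the fact that the linearly ordered set $[n]$, regarded as a category, is generated by its $n$ edges $\{i \leq i+1\}$ subject only to the evident composition relations.

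For the ``if'' direction, suppose $X \cong N\cC$ for a category $\cC$. By definition an $n$-simplex of $N\cC$ is a functor $[n] \to \cC$, and since every morphism $i \to j$ in $[n]$ factors uniquely as the composite of the edges $i \to i+1 \to \cdots \to j$, such a functor is determined by, and freely specified by, the images of the $n$ edges, subject only to the requirement that the target of the image of $\{i-1 \leq i\}$ agree with the source of the image of $\{i \leq i+1\}$. This is exactly the condition that the tuple of edge-images lie in the iterated pullback $X_1 \times_{X_0} \cdots \times_{X_0} X_1$, and the resulting assignment is precisely the Segal map of \cref{defn:1Segal}. Hence the Segal maps are bijections and $X$ is 1-Segal.

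For the ``only if'' direction, let $X$ be a 1-Segal set. I would build a category $\cC$ with object set $X_0$ and morphism set $X_1$, where the source and target of $f \in X_1$ are $d_1 f$ and $d_0 f$, the identity on $x \in X_0$ is $s_0 x$, and the composite of a composable pair is obtained by applying the inverse of the Segal isomorphism $X_2 \xrightarrow{\cong} X_1 \times_{X_0} X_1$ followed by the remaining face map $d_1 \colon X_2 \to X_1$. The unit axioms follow from the simplicial identities relating $s_0$ to the face maps, together with the fact that the degenerate 2-simplices $s_0 f$ and $s_1 f$ witness composition with identities. Associativity is the one genuinely structural point: the two bracketings of a triple of composable morphisms both arise by applying appropriate inner face maps to the unique 3-simplex corresponding, under the Segal isomorphism $X_3 \xrightarrow{\cong} X_1 \times_{X_0} X_1 \times_{X_0} X_1$, to that triple, so they necessarily coincide.

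It then remains to exhibit an isomorphism $X \cong N\cC$. On objects and morphisms this is the identity, and in each higher degree $n$ one takes the Segal bijection $X_n \xrightarrow{\cong} X_1 \times_{X_0} \cdots \times_{X_0} X_1 = (N\cC)_n$, which is invertible precisely by the 1-Segal hypothesis. The main obstacle, and the bulk of the verification, is to check that these degreewise bijections assemble into a map of simplicial sets, i.e.\ that they commute with all face and degeneracy maps. Compatibility with the outer face maps and with the degeneracies is straightforward from the simplicial identities, whereas compatibility with the inner face maps $d_i$ is exactly the statement that such a face corresponds to composing two adjacent morphisms in $\cC$; reducing this to the definition of composition and to the associativity established above is where the argument requires care. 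Naturality of this isomorphism, which yields full faithfulness of the nerve and hence identifies its essential image with the 1-Segal sets, is then routine.
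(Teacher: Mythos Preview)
Your argument is correct. The paper does not actually supply a proof of this proposition, treating it as a classical fact; the subsequent \cref{fundcat} instead sketches how the category is recovered from a 1-Segal set via the fundamental category functor $\tau_1$, the left adjoint to the nerve, with a reference to Gabriel--Zisman for details. Your direct construction of $\cC$ from $X_0$, $X_1$, and the inverse of the Segal isomorphism in degree~$2$ is precisely what $\tau_1$ produces on a 1-Segal set, as that remark itself observes. The only difference in packaging is that you verify the category axioms and the degreewise simplicial isomorphism $X \cong N\cC$ by hand, whereas the paper's remark appeals to the $\tau_1 \dashv N$ adjunction and the fact that its counit is an isomorphism; the underlying content is the same.
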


\begin{rmk}\label{fundcat}
Given a 1-Segal set $X$, the corresponding category can be recovered using the fundamental category functor $\tau_1$. This functor is left adjoint to the nerve functor, and moreover, it is a left inverse (i.e., the counit is an isomorphism); see \cite[\S II.4]{GabrielZisman} for more details.  For a general simplicial set $X$, the set of objects of $\tau_1 X$ is $X_0$, and the morphisms are given by equivalence classes of strings of elements in $X_1$, where the equivalence relation is generated by elements of $X_2$. If $X$ is a 1-Segal set, every equivalence class is uniquely represented by an element in $X_1$. Thus, in this particular case, the sets of objects and morphisms of $\tau_1 X$ are given by $X_0$ and $X_1$, respectively, and the source, target, and identity maps are provided by $d_1$, $d_0$, and $s_0$. Finally, composition is given by the composite
\[X_1\times_{X_0} X_1 \underset{\cong}{\xrightarrow{(d_2,d_0)^{-1}}} X_2 \xrightarrow{d_1} X_1.\]  
Here, we use the fact that $X$ is 1-Segal, and hence that the map $(d_2, d_0)$ is an isomorphism, to obtain a single composition map $X_1 \times_{X_0} X_1 \rightarrow X_1$.  We frequently define composition maps via such diagrams in this paper.
\end{rmk}

We consider the following 2-dimensional generalization of 1-Segal sets as given by \cite{DK}.  

\begin{defn}
A \emph{2-Segal set} is a simplicial set such that, for every $n \geq 3$ and every triangulation $\cT$ of a regular $(n+1)$-gon, the induced map
\[ X_n \rightarrow \underbrace{X_2 \times_{X_1} \cdots \times_{X_1} X_2}_{n-1}, \]
which we call the $\cT$-\emph{Segal map}, is an isomorphism of sets. 
\end{defn} 

Let us explain further how these triangulations of polygons induce such maps.  To do so, we make use of the following notation.  If $S$ is a subset of $\{0,1, \ldots, n\}$, and $X$ is a simplicial set, then we denote by $X_S$ the set of $|S|$-simplices of $X$.  This notation is useful to specify face maps; for example we can denote $d_0 \colon X_2 \rightarrow X_1$ by $X_{\{0,1,2\}} \rightarrow X_{\{1,2\}}$.    

For $n\geq 3$, consider a regular $(n+1)$-gon with a cyclic labelling of its vertices by the set $\{0,1,\dots, n\}$. A triangulation of such an $(n+1)$-gon determines $n-1$ subsets of $\{0,\dots, n\}$ with exactly three elements (the triangles), and the iterated pullback in the $\cT$-Segal map is induced by taking triangles which agree along a 1-dimensional face (a two-element subset).  

For example, when $n=3$, the two triangulations
\begin{center}
\begin{tikzpicture}[scale=1.5, baseline=(a)]
\node (a) at (0,0.5) {};
\draw (0,0) node[dot] {} node[below left] {0} --  (1,0) node[dot] {} node[below right] {1} -- (1,1) node[dot] {} node[above right] {2} -- (0,1) node[dot] {} node[above left] {3} -- cycle;
\draw (1,0) -- (0,1);
\draw (0.5,-0.5) node {$\mathcal T_1$};
\end{tikzpicture}
\hspace{1cm}\mbox{and}\hspace{1cm}
\begin{tikzpicture}[scale=1.5, baseline=(a)]
\node (a) at (0,0.5) {};
\draw (0,0) node[dot] {} node[below left] {0} --  (1,0) node[dot] {} node[below right] {1} -- (1,1) node[dot] {} node[above right] {2} -- (0,1) node[dot] {} node[above left] {3} -- cycle;
\draw (0,0) -- (1,1);
\draw (0.5,-0.5) node {$\mathcal T_2$};
\end{tikzpicture}
\end{center}
of the square 
determine the two diagrams 
$$\begin{tikzcd}[column sep= small]
& X_3 \arrow[swap]{dl}{d_2} \arrow{dr}{d_0}\\
X_{\{0,1,3\}} \arrow[swap]{dr}{d_0}& & X_{\{1,2,3\}} \arrow{dl}{d_1}\\
& X_{\{1,3\}}
\end{tikzcd}
\quad\mbox{and}\quad  
\begin{tikzcd}[column sep= small]
&X_3 \arrow[swap]{dl}{d_3} \arrow{dr}{d_1}\\
X_{\{0,1,2\}} \arrow[swap]{dr}{d_1}& & X_{\{0,2,3\}} \arrow{dl}{d_2}\\
& X_{\{0,2\}},
\end{tikzcd}
$$
which in turn give two maps
$$f_{\mathcal T_1} \colon X_3 \longrightarrow X_{\{0,1,3\}} {\underset{X_{\{1,3\}}}{\times}} X_{\{1,2,3\}}
\quad\mbox{and}\quad  
f_{\mathcal T_2} \colon X_3 \longrightarrow X_{\{0,1,2\}} {\underset{X_{\{0,2\}}}{\times}} X_{\{0,2,3\}}.
$$

The following result follows from the definition of 2-Segal set.

\begin{prop}\label{prop:3iso}
Let $X$ be a simplicial set and $Y$ a 2-Segal set. 
\begin{enumerate}
\item \label{unique} A simplicial map $g\colon X \to Y$ is uniquely determined by $g_i$ for $i=0,1,2$.
\item\label{existence} Given $g_i\colon X_i \to Y_i$ for $i=0,1,2,3$ compatible with the simplicial structure, there exists an extension to a simplicial map $g\colon X\to Y$.
\item\label{isomorphism} If $X$ is also a 2-Segal set, then $g$ is an isomorphism of simplicial sets if and only if $g_i$ is an isomorphism for $i=0,1,2$.
\end{enumerate}
\end{prop}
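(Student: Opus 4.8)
My plan is to treat the three parts in order, using throughout the one structural fact available: since $Y$ is 2-Segal, for each $n\geq 3$ and each triangulation $\cT$ of the $(n+1)$-gon the $\cT$-Segal map $f_\cT^Y\colon Y_n\to Y_{S_1}\times_{\cdots}\times Y_{S_{n-1}}$ is a bijection, where the $S_j$ are the triangles of $\cT$. For the uniqueness statement, I would fix any such $\cT$ and exploit naturality: for a simplicial map $g$ the square relating $g_n$, the map $\gamma_\cT$ induced on the iterated pullbacks by $g_2$ on each triangle factor (well defined because $g_1$ matches the shared edges), and the two $\cT$-Segal maps $f_\cT^X,f_\cT^Y$ commutes. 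As $f_\cT^Y$ is invertible, this forces
\[ g_n \;=\; (f_\cT^Y)^{-1}\circ \gamma_\cT\circ f_\cT^X, \]
so $g_n$ is completely determined by $g_1,g_2$. Together with the given $g_0,g_1,g_2$ this pins down every $g_n$, proving uniqueness.

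For the existence statement I would take the displayed formula as the \emph{definition} of $g_n$ for $n\geq 4$ (using, for each $n$, some triangulation), and then verify two things: that the definition does not depend on $\cT$, and that the resulting family is simplicial. The base case is level $3$: postcomposing the given $g_3$ with $f_{\cT_i}^Y$ for either triangulation $\cT_i$ of the square, and using that $g_3$ commutes with the face maps $d_0,d_1,d_2,d_3$, recovers exactly $\gamma_{\cT_i}\circ f_{\cT_i}^X$; hence the given $g_3$ agrees with the formula for both triangulations. This is the only place the level-$3$ datum is used, and it is precisely what the hypothesis supplies.

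The crux is triangulation independence for $n\geq 4$, which I expect to be the main obstacle. Any two triangulations are connected by a sequence of diagonal flips, each localized to four vertices $\{a,b,c,d\}$, so it suffices to treat a single flip. Given $x\in X_n$ and the element $y:=g_n^\cT(x)$, I would compute $f_{\cT'}^Y(y)$ coordinatewise: all triangles outside the flipped quadrilateral contribute $g_2$ of the corresponding face of $x$ and match the coordinates of $g_n^{\cT'}(x)$ automatically. For the two flipped triangles, the face of $y$ on $\{a,b,c,d\}$ is a $3$-simplex whose $\cT$-triangles are $g_2$ of the faces of $x$; by the level-$3$ 2-Segal isomorphism and the base case this $3$-simplex equals $g_3$ of the $\{a,b,c,d\}$-face of $x$, and then face-compatibility of $g_3$ computes its coordinates along the new diagonal as $g_2$ of the appropriate faces of $x$. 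Thus $f_{\cT'}^Y(g_n^\cT(x))=f_{\cT'}^Y(g_n^{\cT'}(x))$ and $g_n$ is well defined. To check $d_i g_n=g_{n-1}d_i$ I would choose a triangulation in which vertex $i$ is an \emph{ear}, i.e.\ lies only in the single triangle on $\{i-1,i,i+1\}$ (indices cyclic); deleting $i$ then sends this triangulation to one of the $n$-gon, and comparing via the bijection $f_{\cT'}^Y$ reduces the identity to matching the non-ear triangle coordinates, which holds by face-compatibility of $g_2$. Compatibility with degeneracies is analogous, choosing triangulations adapted to the doubled vertex and reducing to the already-established lower-degree compatibilities.

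For the isomorphism statement the forward direction is immediate. Conversely, assume $g_0,g_1,g_2$ are bijections. For $n\geq 3$, since $X$ is now \emph{also} 2-Segal, both $f_\cT^X$ and $f_\cT^Y$ are bijections for a fixed $\cT$; moreover $\gamma_\cT$ is a bijection because it is assembled on the iterated pullback from the bijections $g_1$ and $g_2$. Hence $g_n=(f_\cT^Y)^{-1}\circ\gamma_\cT\circ f_\cT^X$ is a composite of bijections, so each $g_n$ is a bijection and $g$ is an isomorphism of simplicial sets.
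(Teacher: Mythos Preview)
Your proposal is correct and follows essentially the same route as the paper: both use the formula $g_n=(f_\cT^Y)^{-1}\circ\gamma_\cT\circ f_\cT^X$ for uniqueness and the isomorphism statement, establish triangulation independence for existence via diagonal flips reduced to the given $g_3$, and verify simplicial compatibility by choosing triangulations adapted to the relevant vertex (your ``ear'' argument is exactly the paper's choice of the triangle $\{0,n-1,n\}$ for $d_n$, stated for general $d_i$). The only cosmetic difference is that you phrase the flip argument elementwise while the paper writes it as a factorization of $f_\cT$ and $f_{\cT'}$ through a common intermediate pullback $X_2\times_{X_1}\cdots\times_{X_1}X_3\times_{X_1}\cdots\times_{X_1}X_2$.
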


\begin{proof}
Given $n\geq 3$, consider a triangulation $\cT$ of the $(n+1)$-gon. Then commutativity of the diagram
\[
\xymatrixcolsep{5pc}\xymatrix{
X_n \ar[r]^-{g_n} \ar[d]_{f_\cT} & Y_n \ar[d]_{\cong}^{f_{\cT}}\\
X_2 \times_{X_1} \cdots \times_{X_1} X_2 \ar[r]_{g_2\times \cdots \times g_2} & Y_2 \times_{Y_1} \cdots \times_{Y_1} Y_2,
}
\]
where the vertical arrows are the $\cT$-Segal maps, shows that $g_n$ is completely determined by $g_1$ and $g_2$, proving (\ref{unique}). The same diagram proves (\ref{isomorphism}): if  the two $\cT$-Segal maps are isomorphisms and $g_1$ and $g_2$ are isomorphisms, then the bottom map is also an isomorphism, and hence so is $g_n$.

To prove \eqref{existence}, we first prove that for $n\geq 3$ and any two triangulations $\cT$ and $\cT'$ of the $(n+1)$-gon, the diagram
\[
\xymatrix{
&X_2 \times_{X_1} \cdots \times_{X_1} X_2 \ar[rr]^{g_2\times \cdots \times g_2} && Y_2 \times_{Y_1} \cdots \times_{Y_1} Y_2 \ar[dr]_-{\cong}^-{f_{\cT}^{-1}}\\
X_n  \ar[dr]_-{f_{\cT'}}\ar[ur]^-{f_\cT} &&&& Y_n \\
&X_2 \times_{X_1} \cdots \times_{X_1} X_2 \ar[rr]_{g_2\times \cdots \times g_2} && Y_2 \times_{Y_1} \cdots \times_{Y_1} Y_2 \ar[ur]^-{\cong}_-{f_{\cT'}^{-1}}
}
\]
commutes. When $n=3$ there are only two triangulations, and the statement is true since we are given $g_3$, which is equal to both composites as shown in the diagram above. For $n\geq 4$, if two triangulations differ only by one edge, so that we can obtain one from the other by changing the diagonal of a sub-quadrilateral in the $(n+1)$-gon,  the result follows from the fact that the 2-Segal maps $f_\cT$ and $f_{\cT'}$ factor as
\[
\xymatrixcolsep{4pc}\xymatrixrowsep{3pc}\xymatrix{
&&X_2 \times_{X_1} \cdots \times_{X_1} X_2 \\
X_n  \ar[drr]_-{f_{\cT'}}\ar[urr]^-{f_\cT} \ar[rr] &&X_2 \times_{X_1} \cdots \times_{X_1} X_3 \times_{X_1} \cdots \times_{X_1} X_2 \ar[u]_{\id \times \cdots \times f_{\overline{\cT}}\times \cdots  \times \id} \ar[d]^{\id \times \cdots \times f_{\overline{\cT'}}\times \cdots  \times \id}\\
&&X_2 \times_{X_1} \cdots \times_{X_1} X_2 ,
}
\]
where $\overline{\cT}$ and $\overline{\cT'}$ are the two triangulations of the sub-quadrilateral specified above. Since any two triangulations can be related by a sequence of such one-edge flips, the result follows for arbitrary triangulations of the $(n+1)$-gon.

We can thus define $g_n$ as the composite $f^{-1}_{\cT} \circ (g_2 \times \cdots \times g_2) \circ f_{\cT}$ for any triangulation $\cT$. It remains to show that these maps are compatible with the simplicial structure. We prove the compatibility with $d_n\colon X_n \to X_{n-1}$; the other faces and the degeneracies are proved similarly. Let $\cT$ be a triangulation of the $(n+1)$-gon that contains the triangle $\{ 0,n-1,n\}$, and let $\cT'$ be the corresponding triangulation of the $n$-gon, obtained by removing that precise triangle. Then we have the commutative diagram
\[
\xymatrix{
X_n\ar[r]^-{f_{\cT}}\ar[d]_{d_n}& {\overbrace{X_2 \times_{X_1} \cdots \times_{X_1} X_2}^{n-1}} \ar[rr]^{g_2\times \cdots \times g_2} \ar[d] && {\overbrace{Y_2 \times_{Y_1} \cdots \times_{Y_1} Y_2}^{n-1}} \ar[d] \ar[r]_-{\cong}^-{f_{\cT}^{-1}}& Y_n \ar[d]^{d_n}\\
X_{n-1}\ar[r]_-{f_{\cT'}}& {\underbrace{X_2 \times_{X_1} \cdots \times_{X_1} X_2}_{n-2}} \ar[rr]_{g_2\times \cdots \times g_2} && {\underbrace{Y_2 \times_{Y_1} \cdots \times_{Y_1} Y_2}_{n-2}} \ar[r]^-{\cong}_-{f_{\cT'}^{-1}}& Y_{n-1},
}
\]
where the two vertical maps in the middle are projections onto all but one of the factors, corresponding to the triangle that was removed.
\end{proof}

We thus obtain the following consequence; for more details on coskeletal simplicial sets, see, for example \cite[\S IV.3.2]{GJ}.

\begin{cor}
If $Y$ is a 2-Segal set then it is 3-coskeletal.
\end{cor}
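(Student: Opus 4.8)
The plan is to derive $3$-coskeletality formally from the truncation adjunction, using the two clauses of \cref{prop:3iso} as the inputs. Recall that restriction to degrees $\le 3$ gives a functor $\tr_3$ whose right adjoint is $\operatorname{cosk}_3$, and that $Y$ is $3$-coskeletal exactly when the unit $Y \to \operatorname{cosk}_3 Y$ is an isomorphism; by this adjunction together with the Yoneda lemma, that is equivalent to requiring that for every simplicial set $X$ the truncation map
\[
\Hom_{\sset}(X,Y) \longrightarrow \Hom(\tr_3 X, \tr_3 Y), \qquad g \mapsto \tr_3 g,
\]
be a bijection (see \cite[\S IV.3.2]{GJ}). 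Thus I would reduce the corollary to checking that this map is both injective and surjective, which is precisely what the two parts of \cref{prop:3iso} deliver.

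For injectivity, I would note that a map $\tr_3 X \to \tr_3 Y$ is by definition a compatible family $g_i \colon X_i \to Y_i$ for $i = 0,1,2,3$, and in particular records the components $g_0, g_1, g_2$. Since $Y$ is $2$-Segal, \cref{prop:3iso}~(\ref{unique}) guarantees that a simplicial map into $Y$ is already determined by these three components; hence two simplicial maps $X \to Y$ with equal $3$-truncations agree in degrees $\le 2$ and therefore coincide. For surjectivity, given any compatible family $g_0, g_1, g_2, g_3$, the existence clause \cref{prop:3iso}~(\ref{existence}) extends it to a genuine simplicial map $g \colon X \to Y$ whose $3$-truncation is the prescribed data. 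Naturality in $X$ is immediate from the construction, so the truncation map is a natural bijection and the unit $Y \to \operatorname{cosk}_3 Y$ is an isomorphism.

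I expect no genuine obstacle beyond what \cref{prop:3iso} already supplies: the surjectivity step is exactly where the $2$-Segal condition enters, since it is what allows the prescribed degree-$3$ data to be glued consistently across all triangulations of the higher polygons. This is also what pins down the sharp bound as $3$-coskeletal rather than $2$-coskeletal, as the extension in \cref{prop:3iso}~(\ref{existence}) genuinely requires the degree-$3$ data and not merely the data in degrees $\le 2$.
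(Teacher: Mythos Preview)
Your argument is correct and is essentially the paper's own proof, just spelled out in more detail: both derive the bijection $\Hom_{\sset}(X,Y)\cong\Hom_{\sset_{\leq 3}}(\tr_3 X,\tr_3 Y)$ directly from parts~(\ref{unique}) and~(\ref{existence}) of \cref{prop:3iso} and then invoke the coskeleton adjunction. The paper compresses the injectivity/surjectivity split and the Yoneda step into a single sentence, but the content is identical.
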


\begin{proof}
Let $\Delta_{\leq 3}$ denote the truncation of $\Delta$ containing only the objects $[i]$ for $i \leq 3$, and let $\sset_{\leq 3}$ denote the category of functors $\Delta^{\textrm{op}}_{\leq 3} \rightarrow \set$.  Note that (\ref{unique}) and (\ref{existence}) in \cref{prop:3iso} imply that for $X$ any simplicial set and $Y$ a 2-Segal set, the map
\[\Hom_{\sset}(X,Y) \longrightarrow \Hom_{\sset_{\leq 3}}(\tr_3 X, \tr_3 Y)\]
is an isomorphism. Thus any 2-Segal set $Y$ is 3-coskeletal.
\end{proof}

\begin{rmk}
For an interval $[x_0, x_n]$, we can consider a ``triangulation'' $\cT$ given by vertices $\{x_0 < x_1 < \cdots < x_n\}$,
$$\begin{tikz}
\draw (0,0) node[dot] {} node[anchor=north]{$x_0$} -- (1,0) node[dot]{} node[anchor=north]{$x_1$} -- (2,0) node[dot]{} node[anchor=north]{$x_2$} -- node[anchor=north]{$\cdots$} (4,0) node[dot]{} node[anchor=north]{$x_{n-1}$} -- (5,0) node[dot] {} node[anchor=north]{$x_n.$};
\end{tikz}$$
The combinatorics of the subdivision induce the usual 1-Segal maps
\[ X_n \rightarrow X_1 \times_{X_0} \cdots \times_{X_0} X_1. \]
Thus, we can see how 2-Segal sets give a ``higher-dimensional" generalization of 1-Segal sets.
\end{rmk}

One can also check that any 1-Segal set is also 2-Segal; a proof can be found in \cite{DK}. 

We now consider some additional properties on 2-Segal sets.

\begin{defn}
A 2-Segal set $X$ is \emph{reduced} if $X_0$ consists of a single point.
\end{defn}

The following definition is taken from \cite{DK}.

\begin{defn}  \label{defn:unital} 
A 2-Segal set is \emph{unital} if for all $n\geq 2$ and $0\leq i \leq n-1$, the diagram
\begin{center}
\begin{tikzcd}
X_{n-1} \arrow[r, "\alpha_i"] \arrow[d, "s_i"'] & X_0 \arrow[d, "s_0"]\\
X_n \arrow[r, "\beta_i"']  & X_1
\end{tikzcd}
\end{center}
is a pullback, where $s_0$ and $s_i$ are degeneracy maps and the maps $\alpha_i, \beta_i$ are induced by the following maps:
\[
 \begin{aligned}
\alpha^i \colon [0]&\to &&[n-1]\\
0&\mapsto &&i, \\
\beta^i \colon [1] &\to &&[n]\\
0&\mapsto&& i\\
1&\mapsto &&i+1.
 \end{aligned}
\]
\end{defn}

The conditions for unitality of a 2-Segal set can be reduced by the following lemma. The proof is analogous to that of a similar reduction in \cite{GalvezKockTonks}.
\begin{lem}\label{lem:easyunital}
A 2-Segal set $X$ is unital if and only if
\begin{center}
\begin{tikzcd}
X_{1} \arrow[r, "d_0"] \arrow[d, "s_1"'] & X_0 \arrow[d, "s_0"]\\
X_2 \arrow[r, "d_0"']  & X_1
\end{tikzcd}
\quad\mbox{and}\quad
\begin{tikzcd}
X_{1} \arrow[r, "d_1"] \arrow[d, "s_0"'] & X_0 \arrow[d, "s_0"]\\
X_2 \arrow[r, "d_2"']  & X_1
\end{tikzcd}
\end{center}
are pullback diagrams. 
\end{lem}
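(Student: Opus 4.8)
The forward direction is immediate: the two displayed squares are precisely the instances $(n,i)=(2,1)$ and $(n,i)=(2,0)$ of the defining family in \cref{defn:unital}, so if $X$ is unital they are in particular pullbacks. All the content is in the converse, where I would show that these two degree-$2$ pullbacks, together with the $2$-Segal condition, force every square from \cref{defn:unital} — call it $U_{n,i}$ — to be a pullback. I will prove this for $n\ge 3$ and all $0\le i\le n-1$, the case $n=2$ being exactly the hypothesis.

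The plan is to reduce each $U_{n,i}$ to one of the two given squares by isolating, inside the $(n+1)$-gon, a single triangle having $\{i,i+1\}$ as an edge. For $i\le n-2$ I would choose a triangulation $\cT$ containing the triangle $T=\{i,i+1,i+2\}$, and for $i\ge 1$ one containing $T=\{i-1,i,i+1\}$; for $n\ge 3$ at least one of these is available for every $i$. Grouping the iterated fiber product of the $\cT$-Segal map into the factor $X_T\cong X_2$ and the remaining triangles — which triangulate the $n$-gon obtained by deleting the interior vertex of $T$ — the $2$-Segal condition in degree $n-1$ identifies the leftover iterated pullback with $X_{n-1}$, the face omitting that vertex. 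This produces a natural isomorphism
\[
X_n \;\cong\; X_2 \;\times_{X_1}\; X_{n-1},
\]
under which the two projections are $\partial_T$ and the face deleting the interior vertex of $T$ (namely $d_{i+1}$, resp.\ $d_i$), and under which $\beta_i$ corresponds to the map $X_2\to X_1$ recording the edge $\{i,i+1\}$ of $T$ — that is $d_2$ when $T=\{i,i+1,i+2\}$ and $d_0$ when $T=\{i-1,i,i+1\}$.

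With this decomposition in hand I would compute the pullback defining $U_{n,i}$. Forming $X_n\times_{X_1,s_0}X_0$ and substituting the isomorphism above, the constraint $\beta_i(\sigma)=s_0(v)$ becomes a constraint on the $X_2$-factor alone, namely $d_2(\sigma_2)=s_0(v)$ (resp.\ $d_0(\sigma_2)=s_0(v)$). This is exactly the hypothesis that the second (resp.\ first) given square is a pullback, so $\sigma_2$ is forced to be the degenerate $2$-simplex $s_0(\eta)$ (resp.\ $s_1(\eta)$) on its remaining edge $\eta$, whereupon the matching condition in the fiber product becomes $\eta=\beta_i(\tau)$ (resp.\ $\eta=\beta_{i-1}(\tau)$) for the $X_{n-1}$-factor $\tau$. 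Hence the fiber product collapses to a copy of $X_{n-1}$, and a short verification using the simplicial identities (such as $d_{i+1}s_i=\id$, $d_i s_i=\id$, and $d_2 s_0=s_0 d_1$), together with the computation of the $T$-face of $s_i(\tau)$, shows that the resulting bijection is realized by $s_i$ and that the $X_0$-coordinate is $\alpha_i$. That is precisely the assertion that $U_{n,i}$ is a pullback.

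The main obstacle, and the step I would treat most carefully, is justifying the single-triangle decomposition $X_n\cong X_2\times_{X_1}X_{n-1}$. One must check that $T$ shares exactly the one edge $\{i,i+2\}$ (resp.\ $\{i-1,i+1\}$) with the rest of the polygon — its other two edges being boundary edges of the $(n+1)$-gon — so that associativity of fiber products lets one split off the $X_T$ factor, and that the leftover iterated pullback is identified with $X_{n-1}$ by the $2$-Segal isomorphism one dimension down (this is vacuous when $n=3$, where the remainder is a single triangle, and is otherwise the genuine $2$-Segal condition in degree $n-1$). Once that identification and the bookkeeping of which face and degeneracy correspond to which edge are pinned down, the reduction to the two base squares is forced; notably, no induction on the squares $U_{n,i}$ themselves is needed, since the leftover factor is identified with $X_{n-1}$ using the $2$-Segal property rather than any unitality in lower degrees.
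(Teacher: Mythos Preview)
Your argument is correct. The forward direction is indeed trivial, and for the converse your single-triangle splitting
\[
X_n \;\cong\; X_T \times_{X_1} X_{n-1}
\]
with $T=\{i,i+1,i+2\}$ (or $\{i-1,i,i+1\}$) is exactly the right move: the $2$-Segal condition yields this isomorphism, the constraint $\beta_i=s_0(-)$ lives on the $X_T$ factor alone, and the hypothesis pullback identifies that factor with $X_1$ via $s_0$ (resp.\ $s_1$), after which $d_1s_0=\id$ (resp.\ $d_1s_1=\id$) collapses the remaining fiber product to $X_{n-1}$. Your bookkeeping of which face of $T$ is which, and the verification that the resulting map is $(s_i,\alpha_i)$, are accurate.

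As for comparison with the paper: the paper does not actually prove this lemma but only remarks that ``the proof is analogous to that of a similar reduction in \cite{GalvezKockTonks}.'' Your write-up therefore supplies strictly more than the paper does. The approach you take---reducing the general unitality square to the degree-$2$ case by peeling off one triangle via the $2$-Segal isomorphism---is the standard one and is in the spirit of the cited reference, so there is no genuine divergence in method, only in the level of detail provided.

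One small expository point: in your penultimate paragraph you could streamline by noting that the identity $d_1s_0=\id$ makes the intermediate $X_1$-factor disappear immediately, rather than phrasing it as ``the matching condition becomes $\eta=\beta_i(\tau)$''; but this is purely cosmetic.
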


\begin{notn}
Let $\untwoseg$ and $\untwoseg_*$ denote the full subcategories of $\sset$ consisting of unital $2$-Segal sets and reduced unital 2-Segal sets, respectively.
\end{notn}

A useful way to check that a simplicial set is 2-Segal is the Path Space Criterion, which can be found in \cite{DK} and in a similar form in \cite{GalvezKockTonks}.

Recall the endofunctors of $\Delta$ which are used to define the path space construction,
 \[
  \begin{aligned}
   i \colon \Delta  \to & \Delta\\
   [n]  \mapsto & [0]*[n] 
   \end{aligned}
   \qquad \text{and} \qquad
   \begin{aligned}
   f \colon \Delta  \to & \Delta\\
   [n] \mapsto & [n]*[0],
  \end{aligned}
 \]
where $*$ denotes the join operation, which for linear posets is simply given by concatenation.  Here, the functor names are meant to suggest adjoining an initial and final object, respectively. Note that there are natural transformations $i\Rightarrow \id_{\Delta}$ and $f\Rightarrow \id_{\Delta}$ induced by the maps $\delta^0\colon [n] \to [0]*[n]=[n+1]$ and $\delta^{n+1}\colon [n] \to [n]*[0]=[n+1]$, respectively.

\begin{defn}[\cite{DK}]\label{defn:pathspace}
Given a simplicial set $X$, its \emph{path spaces} are the simplicial sets $\Pplus(X)=X\circ i^{\textrm{op}}$ and $\Pminus(X)=X\circ f^{\textrm{op}}$. 
\end{defn}

The natural transformations above induce maps of simplicial sets 
\[
d_0\colon \Pplus (X) \to X \quad \text{and} \quad d_{top} \colon \Pminus(X)\to X
\]
that are natural in $X$. These simplicial sets are also often called \emph{d\'ecalages}, for example in \cite{GalvezKockTonks}.

The following Path Space Criterion relates $1$-Segal sets and $2$-Segal sets.
\begin{thm}[\cite{DK}, \cite{GalvezKockTonks}]\label{thm:PathSpaceCriterion}
A simplicial set $X$ is $2$-Segal if and only if its path spaces $\Pplus X$ and $\Pminus X$ are $1$-Segal sets.
\end{thm}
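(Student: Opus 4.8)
The plan is to reduce both directions to the combinatorics of triangulations, by identifying the $1$-Segal maps of the path spaces with the $\cT$-Segal maps of $X$ for the two families of \emph{fan} triangulations. Unwinding \cref{defn:pathspace}, an $m$-simplex of $\Pplus X = X\circ i^{\textrm{op}}$ is an $(m+1)$-simplex of $X$ on the vertices $\{0,1,\dots,m+1\}$, and the face and degeneracy maps of $\Pplus X$ are those of $X$ with indices shifted up by one; thus the $j$-th vertex of such an $m$-simplex records the edge $\{0,j+1\}$ of $X$, and its $j$-th edge records the triangle $\{0,j+1,j+2\}$. Under this dictionary the $1$-Segal map of $\Pplus X$ in degree $m$ is exactly the $\cT$-Segal map of $X$ in degree $m+1$ for the triangulation of the $(m+2)$-gon by the fan $\{0,k,k+1\}$ of triangles based at the vertex $0$, the iterated pullback being glued along the edges $\{0,k\}$. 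Symmetrically, the $1$-Segal map of $\Pminus X$ in degree $m$ is the $\cT$-Segal map for the fan based at the top vertex. First I would check this identification; since $n\geq 3$ corresponds to $m=n-1\geq 2$, it matches the ranges in \cref{defn:1Segal} exactly, and the only care needed is the indexing of $i^{\textrm{op}}$ and $f^{\textrm{op}}$.

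The forward direction is then immediate: if $X$ is $2$-Segal, every $\cT$-Segal map is an isomorphism, in particular those for the two fans, so $\Pplus X$ and $\Pminus X$ are $1$-Segal. For the converse, assume both path spaces are $1$-Segal, so that every fan-based $\cT$-Segal map of $X$ is an isomorphism. Specializing to $n=3$, the two triangulations $\cT_1,\cT_2$ of the square are precisely the fan based at the top vertex and the fan based at $0$, so both are isomorphisms; restricting along the order isomorphism $\{a,b,c,d\}\cong[3]$, it follows that for every four-element subset of vertices both triangulation maps of the associated square are isomorphisms. I will call this the \emph{local square condition}.

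It remains to promote the fan conditions to all triangulations, which I would do by a flip argument. Any two triangulations of the $(n+1)$-gon are related by a finite sequence of flips, each exchanging the two diagonals of a sub-quadrilateral $Q$. As in the proof of \cref{prop:3iso}, if $\cT$ and $\cT'$ differ by a single such flip, then $f_{\cT}$ and $f_{\cT'}$ both factor through one common map
\[ h\colon X_n \longrightarrow X_2 \times_{X_1} \cdots \times_{X_1} X_3 \times_{X_1} \cdots \times_{X_1} X_2, \]
whose single $X_3$-factor corresponds to $Q$, followed by $\id\times\cdots\times f_{\overline{\cT}}\times\cdots\times\id$ and $\id\times\cdots\times f_{\overline{\cT'}}\times\cdots\times\id$, where $\overline{\cT},\overline{\cT'}$ are the two triangulations of $Q$. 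By the local square condition both $f_{\overline{\cT}}$ and $f_{\overline{\cT'}}$ are isomorphisms, so $f_{\cT}$ is an isomorphism if and only if $h$ is, if and only if $f_{\cT'}$ is; that is, being an isomorphism is flip-invariant. Since the fan based at $0$ already gives an isomorphism and every triangulation is reachable from it by flips, all $\cT$-Segal maps are isomorphisms and $X$ is $2$-Segal. The main obstacle is precisely this last step: constructing the common factorization $h$ and verifying its naturality is the only place where the full local square condition---and hence both path spaces---is needed, and it is what upgrades the two one-parameter fan families to the complete two-dimensional family of triangulations.
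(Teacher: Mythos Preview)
The paper does not prove this theorem; it is quoted from \cite{DK} and \cite{GalvezKockTonks} without argument. Your proposal is correct and is essentially the standard proof found in those references: the identification of the $1$-Segal maps of $\Pplus X$ and $\Pminus X$ with the $\cT$-Segal maps for the two fan triangulations is exactly right, and your flip argument---leveraging the factorization through the common map $h$ that the paper itself uses in the proof of \cref{prop:3iso}---correctly propagates the isomorphism property from the fan at $0$ to every triangulation, once the $n=3$ case supplies both diagonal flips.

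One small remark on presentation: in your converse you invoke ``restricting along the order isomorphism $\{a,b,c,d\}\cong[3]$'' to get the local square condition for an arbitrary sub-quadrilateral. This is fine, but it is worth saying explicitly that the sets $X_S$ for $|S|=4$ are by definition copies of $X_3$ with face maps relabelled, so the two $2$-Segal maps for $Q$ are literally the same functions as those for $[3]$; no extra naturality is needed. With that clarification the argument is complete.
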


\section{Three examples} \label{sec:twosegex}
In this section, we give explicit descriptions of three examples of 2-Segal sets.  The first is that of partial monoids; we follow the treatment of Segal in \cite{Segal}.

\begin{ex}\label{ex: partial monoids}
A partial monoid is a set $M$ together with a subset $M_2 \subseteq M\times M$ which is the domain of an associative multiplication. In other words, there is a map $\cdot \colon M_2\to M$ such that
\begin{enumerate}
\item for all $m,m',m''\in M$, we have that $(m\cdot m') \cdot m''$ is defined if an only if $m\cdot (m'\cdot m'')$ is defined, and if they are both defined, then we have {\em associativity} 
\[(m\cdot m') \cdot m'' = m\cdot (m'\cdot m''),\] 

\item there is a {\em unit} $1\in M$ such that for every $m\in M$, we have that $(1,m)\in M_2$ and $(m,1)\in M_2$, and $1\cdot m=m\cdot 1=m$.
\end{enumerate}

In \cite{Segal}, Segal defined the nerve of a partial monoid as the following simplicial set. Let $M_0=\{1\}$, and for $k\geq 1$, let $M_k \subseteq M^{\times k}$ be the subset of \emph{composable $k$-tuples}, which are elements $(m_1,\ldots, m_k)\in M^{\times k}$ such that $(m_1\cdots m_i,m_{i+1})\in M_2$ for every $1\leq i < k$. The face maps are given by composition, and the degeneracy maps are defined using insertion of the unit $1$. More precisely, for $0\leq i\leq k$,
\begin{align*}
d_i:  (m_1,\ldots, m_k) & \longmapsto \begin{cases}
(m_2,\ldots, m_k), & i=0,\\
(m_1,\ldots , m_i\cdot m_{i+1}, \ldots, m_k), & 0< i < k,\\
(m_1,\ldots, m_{k-1}), & i=k;
\end{cases}\\
s_i: (m_1,\ldots, m_k) & \longmapsto (m_1,\ldots, m_i, 1, m_{i+1},\ldots,  m_k).
\end{align*}
We claim that $M_\bullet$ is a 2-Segal set. Indeed, let $\cT$ be any triangulation of a polygon with $n$ vertices. We need to check that the induced
 $\cT$-Segal map
$$f_\cT \colon  M_n \rightarrow \underbrace{M_2\times_{M_1} \cdots \times_{M_1} M_2}_{n-1}$$
is a bijection. The maps used in the fiber product on the right-hand side are face maps. 

Let us first illustrate this bijection for an example. Consider the following triangulation of a pentagon:
$$\begin{tikzpicture}[scale=1.5]
\draw (0:1) node[dot] (0) {} node[anchor=west] {0}
-- (72: 1) node[dot] (1) {} node[anchor=south west] {1}
-- (144:1) node[dot] (2) {} node[anchor=south east] {2}
-- (216:1) node[dot] (3) {} node[anchor=north east] {3}
-- (288:1) node[dot] (4) {} node[anchor= north west] {4.}
-- cycle;
\draw (0) -- (2) -- (4);
\end{tikzpicture}$$

An element in $M_4$ is a composable 4-tuple $(m_1, m_2, m_3, m_4)$, i.e.,~we have that $(m_1, m_2), (m_1m_2, m_3),$ and $(m_1m_2m_3, m_4)\in M_2$. 
The $\cT$-Segal map lands in the fiber product
 $${M_2} \, {\underset{M_1}{^{d_1}\times^{d_2}}} M_2\,  {\underset{M_1}{^{d_0}\times^{d_1}}} M_2,$$
and it sends $(m_1,m_2,m_3,m_4)$ to 
\[ ((m_1,m_2),(m_1\cdot m_2,m_3\cdot m_4),(m_3,m_4)). \] 
An arbitrary element in the fiber product above is a triple $$(m_1, m_2), (n_1, n_2), (m_3, m_4)$$
of elements in $M_2$ such that $n_1=m_1\cdot m_2$ and $n_2=m_3\cdot m_4$. We can think of such an element as a decoration of the triangulation:
$$\begin{tikzpicture}[scale=1.5]\small
\draw (0:1) node[dot] (0) {} node[anchor=west] {0}
-- node[anchor=south west] {$m_1$} (72: 1) node[dot] (1) {} node[anchor=south west] {1}
-- node[above] {$m_2$} (144:1) node[dot] (2) {} node[anchor=south east] {2}
-- node[anchor=east] {$m_3$} (216:1) node[dot] (3) {} node[anchor=north east] {3}
-- node[below] {$m_4$} (288:1) node[dot] (4) {} node[anchor= north west] {4.}
-- node[anchor=north west] {$(m_1\cdot m_2)\cdot (m_3\cdot m_4)$} (360:1);
\draw (0) -- node[above] {$m_1\cdot m_2$} (2) -- node[anchor= west] {$m_3\cdot m_4$} (4);
\end{tikzpicture}$$
Since $(n_1, n_2) = (m_1\cdot m_2, m_3\cdot m_4)\in M_2$, we have that $(m_1\cdot m_2)\cdot (m_3\cdot m_4)$ is well-defined and by associativity, $(m_1\cdot m_2\cdot m_3, m_4)\in M_2$. In particular, all other necessary products are well-defined and therefore $(m_1,m_2,m_3,m_4)\in M_4$.

The essential ingredient of this argument is to relate an element in the fiber product appearing in the $\cT$-Segal map with a decoration of the triangulation. In particular, such an element determines the labels $m_1,\ldots, m_n$ decorating the outer edges of the $n$-gon except for the last one (the 0$n$ edge). The diagonals in the interior are decorated by iterated products of some of these elements. Finally, the 0$n$ edge is decorated by an iterated product of all of the elements; it is a classical result going back to Catalan \cite{Catalan} that triangulations of an $(n+1)$-gon are in one-to-one correspondence with the ways of bracketing a product of $n$ elements. Associativity of the multiplication then implies that $(m_1,\ldots, m_n)\in M_n$.

Moreover, the 2-Segal set $M_\bullet$ is unital. The map $\beta_i \colon M_{n} \to M_1$ of \cref{defn:unital} sends $(m_1,\dots , m_{n})\in M_{n}$ to $m_{i+1}$. Thus, an element of the pullback of the diagram
\[
\xymatrix{
 & M_0 \ar[d]^-{s_0}\\
M_{n} \ar[r]_-{\beta_i}  & M_1
}
\]
is an element of $M_{n}$ of the form $(m_1,\dots, m_i, 1, m_{i+2},\dots, m_{n})$. The map from $M_{n-1}$ to the pullback induced by the universal property, which sends $(m_1,\dots, m_{n-1})$ to $(m_1,\dots, m_i, 1, m_{i+1},\dots, m_{n-1})$, is then an isomorphism.
\end{ex}

The second example is that of cobordisms with a genus constraint, which is taken from and treated in more detail in work of the fifth-named author and Valentino in \cite{SV}. 

\begin{ex} \label{ex:cob} 
Fix a non-negative integer $g$, and consider 2-dimensional cobordisms with the constraint that its genus is less than or equal to $g$.   Following the definition of (the nerve of) the usual 2-dimensional cobordism category, we consider the following simplicial set $2\mathrm{Cob}^{\leq g}$.
\begin{itemize}
\item Let the elements in $(2\mathrm{Cob}^{\leq g})_0$ be 1-dimensional closed manifolds, which can be depicted by
$$\begin{tikzpicture}[scale=0.1]
\draw (0,0) circle (3);
\draw (7,0) circle (3);
\draw (14,0) node{$\cdots$};
\draw (21,0) circle (3);
\end{tikzpicture}$$
and are just disjoint unions of circles.

\item Let the elements of $(2\mathrm{Cob}^{\leq g})_k$ be diffeomorphism classes of 2-di\-men\-sion\-al cobordisms $\Sigma$ between 1-dimensional closed manifolds $\partial_{\textrm{in}} \Sigma$ and $\partial_{\textrm{out}} \Sigma$ with genus less than or equal to $g$, together with a decomposition thereof into $k$ cobordisms $\Sigma_1, \ldots, \Sigma_k$. Here, a diffeomorphism of such decomposed cobordisms must restrict to the individual composed cobordisms $\Sigma_i$ for $1\leq i \leq k$. We write $(\Sigma_1, \ldots, \Sigma_k)$ for an element in $(2\mathrm{Cob}^{\leq g})_k$, since the individual cobordisms fully determine the $k$-simplex.
\end{itemize}

For example, if $g=0$, the left picture is allowed as a 3-simplex of $2\mathrm{Cob}^{\leq 0}$, whereas the second one is not:
$$
\raisebox{-0.5\height}{\includegraphics{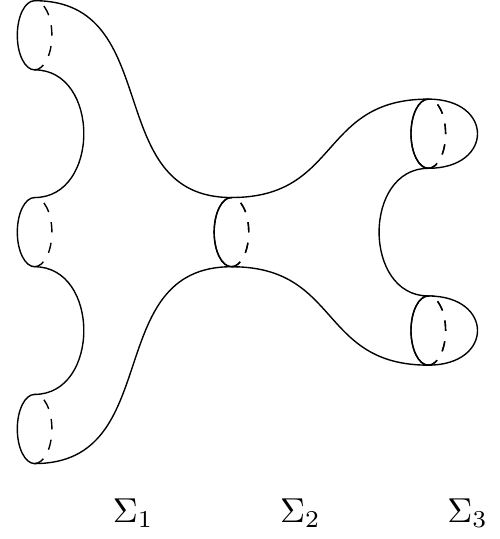}}
\hspace{1.5cm}
\raisebox{-0.5\height}{\includegraphics{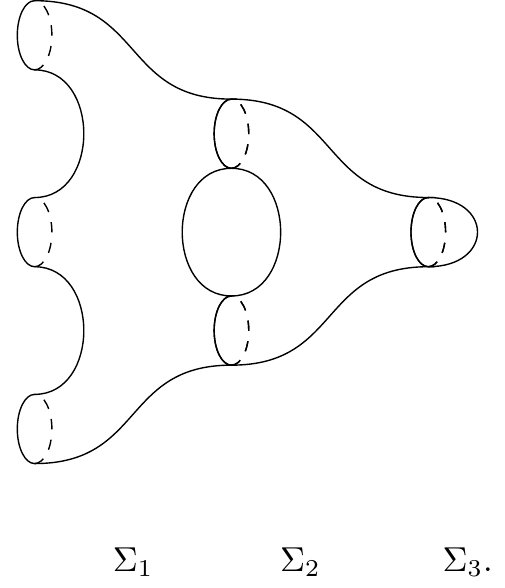}}
$$

Observe that $2\mathrm{Cob}^{\leq g}$ is not the nerve of a category, because not all pairs with compatible outgoing and incoming boundary components compose, as illustrated by the picture above.  However, it is a unital 2-Segal set which is a simplicial subset of the nerve of the usual 2-dimensional cobordism category $2\mathrm{Cob}$. 
\end{ex}

Our next example is inspired by the example of the 2-Segal space of graphs described by G\'alvez--Carrillo, Kock, and Tonks in \cite{GalvezKockTonks_comb}, from which one can obtain the Hopf algebra of graphs. Our example instead looks at a 2-Segal set corresponding to a single graph.

\begin{ex} \label{ex:graph}
Let $G$ be a graph consisting of a set $v(G)$ of vertices of $G$ and a set of edges between vertices.  We associate to $G$ a simplicial set $X$ as follows.
\begin{enumerate}
    \item The set $X_0$ has a single element which we denote by $\varnothing$.
    
    \item \label{x1} The set $X_1$ is the set of all subgraphs of $G$.
    
    \item \label{xn} Any $X_n$ has elements $(H; S_1, \ldots, S_n)$ where $H$ is a subgraph of $G$ and the sets $S_1, \ldots, S_n$ form a partition of the set $v(H)$ of vertices into $n$ disjoint (but possibly empty) sets. 

    \item The face maps $d_i \colon X_n \rightarrow X_{n-1}$ are defined in the following way.
    \begin{enumerate}
        \item If $i=0$, then
        \[ d_0(H; S_1, \ldots, S_n) = (H^{(0)}; S_2, \ldots, S_n) \]
        where $H^{(0)}$ denotes the full subgraph of $H$ on the vertices $v(H) \setminus S_1$.
        
        \item If $i=n$, then 
        \[ d_n(H; S_1, \ldots, S_n) = (H^{(n)}; S_1, \ldots, S_{n-1}) \]
        where $H^{(n)}$ denotes the full subgraph of $H$ on the vertices $v(H) \setminus S_n$.
        
        \item If $0<i<n$, then 
        \[ d_i(H; S_1, \ldots, S_n) =(H; S_1, \ldots S_{i-1}, S_i \cup S_{i+1}, S_{i+2}, \ldots, S_n). \]
    \end{enumerate}
    
    \item The degeneracy maps $s_i \colon X_n \rightarrow X_{n+1}$ are given by 
    \[ s_i(H; S_1, \ldots, S_n) = (H; S_1, \ldots S_i, \varnothing, S_{i+1}, \ldots, S_n). \]
    \end{enumerate}

Observe that condition \eqref{x1} is actually encompassed by condition \eqref{xn}, if we regard a graph as being partitioned into a single set. We use this uninteresting partition in the depictions of particular examples that follow.  

One can check that this simplicial set is 2-Segal but not 1-Segal. 
Let us investigate a specific example.  Consider the graph 
\begin{center}
\begin{tikzpicture}
\def\l{1cm} 
\draw (0,0) node (G){$G:$};
\draw (\l,0)  node[vnode=$a$](a){};
\draw (2*\l,0) node[vnode=$b$](b){};
\draw (3*\l,0) node[vnode=$c.$](c){};
\draw (a)--(b);
\draw (b)--(c);
\end{tikzpicture}
\end{center}
Then we can depict the set of 1-simplices as: 

\begin{tikzpicture}
\def\l{1cm} 

 \draw (0,0) node {$X_1:$};
 \draw (\l,0) node {$\varnothing$};
 \draw (\l,0) node[vellipsefirstone=\l]{};
 
\begin{scope}[xshift=2*\l]
\draw (0,0) node [vnode=$a$](a){};
\draw (0,0) node[vellipsefirstone=\l]{};
\end{scope}

 \begin{scope}[xshift=3*\l]
\draw (0,0) node [vnode=$b$](b){};
\draw (0,0) node[vellipsefirstone=\l]{};
\end{scope}

 \begin{scope}[xshift=4*\l]
\draw (0,0) node [vnode=$c$](c){};
\draw (0,0) node[vellipsefirstone=\l]{};
\end{scope}

\begin{scope}[xshift=4*\l]
\draw (\l,0) node [vnode=$a$](a){};
\draw (2*\l,0) node [vnode=$b$](b){};
\draw (1.5*\l,0) node[vellipsefirsttwo=\l]{};
\end{scope}

\begin{scope}[xshift=6.5*\l]
\draw (\l,0) node [vnode=$a$](a){};
\draw (2*\l,0) node [vnode=$b$](b){};
\draw (a)--(b);
\draw (1.5*\l,0) node[vellipsefirsttwo=\l]{};
\end{scope}

\begin{scope}[xshift=9.0*\l]
\draw (\l,0) node [vnode=$a$](a){};
\draw (2*\l,0) node [vnode=$c$](c){};
\draw (1.5*\l,0) node[vellipsefirsttwo=\l]{};
\end{scope}

\begin{scope}[yshift=-2*\l,xshift=\l]
\draw (\l,0) node [vnode=$b$](b){};
\draw (2*\l,0) node [vnode=$c$](c){};
\draw (1.5*\l,0) node[vellipsefirsttwo=\l]{};
\end{scope}

\begin{scope}[yshift=-2*\l, xshift=3.5*\l]
\draw (\l,0) node [vnode=$b$](b){};
\draw (2*\l,0) node [vnode=$c$](c){};
\draw (b)--(c);
\draw (1.5*\l,0) node[vellipsefirsttwo=\l]{};
\end{scope}
 
\begin{scope}[yshift=-2*\l, xshift=6*\l]
\draw[fill] (\l,0) node [vnode=$a$](a){};
\draw (2*\l,0) node [vnode=$b$](b){};
\draw (3*\l,0) node [vnode=$c$](c){};
\draw (2*\l,0) node[vellipsefirstthree=\l]{};
\end{scope}

\begin{scope}[yshift=-4*\l,xshift=\l]
\draw (\l,0) node [vnode=$a$](a){};
\draw (2*\l,0) node [vnode=$b$](b){};
\draw (3*\l,0) node [vnode=$c$](c){};
\draw (a)--(b);
\draw (2*\l,0) node[vellipsefirstthree=\l]{};
\end{scope}

\begin{scope}[yshift=-4*\l, xshift=5.5*\l]
\draw (\l,0) node [vnode=$a$](a){};
\draw (2*\l,0) node [vnode=$b$](b){};
\draw (3*\l,0) node [vnode=$c$](c){};
\draw (b)--(c);
\draw (2*\l,0) node[vellipsefirstthree=\l]{};
\end{scope}

\begin{scope}[yshift=-6*\l,xshift=3*\l]
\draw (\l,0) node [vnode=$a$](a){};
\draw (2*\l,0) node [vnode=$b$](b){};
\draw (3*\l,0) node [vnode=$c.$](c){};
\draw (a)--(b);
\draw (b)--(c);
\draw (2*\l,0) node[vellipsefirstthree=\l]{};
\end{scope}
\end{tikzpicture} 

We do not list all the 2-simplices, but illustrate with some examples of face maps.  We illustrate partitions by colored circles; we use blue for the first element of the partition, orange for the second, and, for 3-simplices, green for the third.
In most examples the partitions are ordered from left to right.  First, we show the effect of all the face maps on a representative element of $X_2$:
\begin{center}
\begin{tikzpicture}
\def\l{1cm} 
\begin{scope}

\draw[fill] (\l,0) node [vnode=$a$](a){};
\draw (2*\l,0) node [vnode=$b$](b){};
\draw (3*\l,0) node [vnode=$c$](c){};
\draw (a)--(b);

\draw (\l,0) node[vellipsefirstone=\l]{};
\draw (2.5*\l, 0) node[vellipsesecondtwo=\l]{};
\end{scope}

\begin{scope}[yshift={-1.7*\l}, xshift={-3*\l}]
\draw (\l,0) node [vnode=$b$](b){};
\draw (2*\l,0) node [vnode=$c$](c){};
\draw (1.5*\l, 0) node[vellipsefirsttwo=\l]{};
\end{scope}

\begin{scope}[yshift=-2.2*\l]
\draw (\l,0) node [vnode=$a$](a){};
\draw (2*\l,0) node [vnode=$b$](b){};
\draw (3*\l,0) node [vnode=$c$](c){};
\draw (a)--(b);
\draw (2*\l, 0) node[vellipsefirstthree=\l]{};
\end{scope}

\begin{scope}[yshift=-1.7*\l, xshift=4*\l]
\draw (\l,0) node [vnode=$a.$](a){};
\draw (\l,0) node[vellipsefirstone=\l]{};
\end{scope}

\draw[|->, thick] (0.5*\l, -0.1*\l)--node[above](d0){$d_0$}(-1.0*\l, -1.0*\l);
\draw[|->, thick] (2*\l, -0.8*\l)--node[left](d1){$d_1$}(2*\l, -1.3*\l);
\draw[|->, thick] (3.5*\l, -0.6*\l)--node[above](d2){$d_2$}(4.4*\l, -1.3*\l);
\end{tikzpicture}
\end{center}
Likewise, we depict the degeneracies of a particular 1-simplex as follows:
\begin{center}
\begin{tikzpicture}
\def\l{1cm} 
\begin{scope}

\draw (\l,0) node [vnode=$a$](a){};
\draw (2*\l,0) node [vnode=$b$](b){};
\draw (a)--(b);
\draw (1.5*\l,0) node[vellipsefirsttwo=\l]{};
\end{scope}

\begin{scope}[yshift={-2*\l}, xshift={-2*\l}]
\draw (\l,0) node (emp){$\varnothing$};
\draw (2*\l,0) node [vnode=$a$](a){};
\draw (3*\l,0) node [vnode=$b$](b){};
\draw (a)--(b);
\draw (\l,0) node[vellipsefirstone=\l]{};
\draw (2.5*\l, 0) node[vellipsesecondtwo=\l]{};
\end{scope}

\begin{scope}[yshift=-2*\l, xshift=2*\l]
\draw[fill] (\l,0) node [vnode=$a$](a){};
\draw (2*\l,0) node [vnode=$b$](b){};
\draw (3*\l,0) node(emp){$\varnothing.$};
\draw (a)--(b);
\draw (1.5*\l,0) node[vellipsefirsttwo=\l]{};
\draw (3*\l, 0) node[vellipsesecondone=\l]{};
\end{scope}

\draw[|->, thick] (0.4*\l, -0.2*\l)--node[above, yshift=0.1cm](s0){$s_0$}(-0.2*\l, -1.0*\l);
\draw[|->, thick] (2.6*\l, -0.2*\l)--node[above](s1){$s_1$}(3.5*\l, -1.0*\l);
\end{tikzpicture}
\end{center}
More interestingly, we have the following face maps of a 3-simplex of $X$:
\begin{center}
\begin{tikzpicture}
\def\l{1cm} 
\begin{scope}

\draw[fill] (\l,0) node [vnode=$a$](a){};
\draw (2*\l,0) node [vnode=$b$](b){};
\draw (3*\l,0) node [vnode=$c$](c){};
\draw (b)--(c);

\draw (\l,0) node[vellipsefirstone=\l]{};
\draw (2*\l, 0) node[vellipsesecondone=\l]{};
\draw (3*\l, 0) node[vellipsethirdone=\l]{};
\end{scope}

\begin{scope}[yshift={-1.7*\l}, xshift={-3*\l}]
\draw (\l,0) node [vnode=$b$](b){};
\draw (2*\l,0) node [vnode=$c$](c){};
\draw (b)--(c);
\draw (\l,0) node[vellipsefirstone=\l]{};
\draw (2*\l, 0) node[vellipsesecondone=\l]{};
\end{scope}

\begin{scope}[yshift=-2.5*\l]
\draw[fill] (\l,0) node [vnode=$a$](a){};
\draw (2*\l,0) node [vnode=$b$](b){};
\draw (3*\l,0) node [vnode=$c$](c){};
\draw (b)--(c);
\draw (1.5*\l,0) node[vellipsefirsttwo=\l]{};
\draw (3*\l, 0) node[vellipsesecondone=\l]{};
\end{scope}

\begin{scope}[yshift=-2.5*\l, xshift=3.3*\l]
\draw[fill] (\l,0) node [vnode=$a$](a){};
\draw (2*\l,0) node [vnode=$b$](b){};
\draw (3*\l,0) node [vnode=$c$](c){};
\draw (b)--(c);
\draw (\l,0) node[vellipsefirstone=\l]{};
\draw(2.5*\l, 0) node[vellipsesecondtwo=\l]{};
\end{scope}

\begin{scope}[yshift={-1.7*\l}, xshift={6.3*\l}]
\draw (\l,0) node [vnode=$a$](a){};
\draw (2*\l,0) node [vnode=$b.$](b){};
\draw (\l,0) node[vellipsefirstone=\l]{};
\draw (2*\l, 0) node[vellipsesecondone=\l]{};
\end{scope}

\draw[|->, thick] (0.5*\l, -0.1*\l)--node[above](d0){$d_0$}(-1.35*\l, -1.0*\l);
\draw[|->, thick] (2*\l, -0.8*\l)--node[left](d1){$d_1$}(2*\l, -1.8*\l);
\draw[|->, thick] (3.5*\l, -0.6*\l)--node[above](d2){$d_2$}(4.7*\l, -1.5*\l);
\draw[|->, thick] (4*\l, 0*\l)--node[above](d3){$d_3$}(7.7*\l, -1.0*\l);
\end{tikzpicture}
\end{center}

We can start to see from these face maps how one might reconstruct a 3-simplex from its faces.

Indeed, the fact that the associated simplicial set $X$ is 2-Segal can be illustrated by the diagrams
\begin{center}
\begin{tikzpicture}
\def\l{1cm} 
\begin{scope}

\draw[fill] (\l,0) node [vnode=$a$](a){};
\draw (2*\l,0) node [vnode=$b$](b){};
\draw (3*\l,0) node [vnode=$c$](c){};
\draw (b)--(c);

\draw (\l,0) node[vellipsefirstone=\l]{};
\draw (2*\l, 0) node[vellipsesecondone=\l]{};
\draw (3*\l, 0) node[vellipsethirdone=\l]{};
\end{scope}

\begin{scope}[yshift=-3*\l, xshift=\l]
\draw (\l,0) node [vnode=$b$](b){};
\draw (2*\l,0) node [vnode=$c$](c){};
\draw (b)--(c);
\draw (\l,0) node[vellipsefirstone=\l]{};
\draw (2*\l, 0) node[vellipsesecondone=\l]{};
\end{scope}

\begin{scope}[ xshift=4*\l]
\draw[fill] (\l,0) node [vnode=$a$](a){};
\draw (2*\l,0) node [vnode=$b$](b){};
\draw (3*\l,0) node [vnode=$c$](c){};
\draw (b)--(c);
\draw (\l,0) node[vellipsefirstone=\l]{};
\draw(2.5*\l, 0) node[vellipsesecondtwo=\l]{};
\end{scope}

\begin{scope}[yshift=-3*\l, xshift=5*\l]
\draw (\l,0) node [vnode=$b$](b){};
\draw (2*\l,0) node [vnode=$c$](c){};
\draw (b)--(c);
\draw (1.5*\l, 0) node[vellipsefirsttwo=\l]{};
\end{scope}

\draw[|->, thick] (2*\l, -1*\l)--node[left](d0){$d_0$}(2*\l, -2.1*\l);
\draw[|->, thick] (3.5*\l, 0)--node[above](d2){$d_2$}(4.5*\l, 0);
\draw[|->, thick] (3.5*\l, -3*\l)--node[below](d1){$d_1$}(5.4*\l, -3*\l);
\draw[|->, thick] (6*\l, -1*\l)--node[right](d0r){$d_0$}(6*\l, -2.2*\l);
\end{tikzpicture}
\end{center}
and
\begin{center}
\begin{tikzpicture}
\def\l{1cm} 
\begin{scope}

\draw[fill] (\l,0) node [vnode=$a$](a){};
\draw (2*\l,0) node [vnode=$b$](b){};
\draw (3*\l,0) node [vnode=$c$](c){};
\draw (b)--(c);

\draw (\l,0) node[vellipsefirstone=\l]{};
\draw (2*\l, 0) node[vellipsesecondone=\l]{};
\draw (3*\l, 0) node[vellipsethirdone=\l]{};
\end{scope}

\begin{scope}[yshift=-3*\l]
\draw[fill] (\l,0) node [vnode=$a$](a){};
\draw (2*\l,0) node [vnode=$b$](b){};
\draw (3*\l,0) node [vnode=$c$](c){};
\draw (b)--(c);
\draw (1.5*\l,0) node[vellipsefirsttwo=\l]{};
\draw (3*\l, 0) node[vellipsesecondone=\l]{};
\end{scope}

\begin{scope}[xshift={4*\l}]
\draw (\l,0) node [vnode=$a$](a){};
\draw (2*\l,0) node [vnode=$b$](b){};
\draw (\l,0) node[vellipsefirstone=\l]{};
\draw (2*\l, 0) node[vellipsesecondone=\l]{};
\end{scope}

\begin{scope}[xshift={4*\l}, yshift=-3*\l]
\draw (\l,0) node [vnode=$a$](a){};
\draw (2*\l,0) node [vnode=$b.$](b){};
\draw (1.5*\l,0) node[vellipsefirsttwo=\l]{};
\end{scope}

\draw[|->, thick] (2*\l, -1*\l)--node[left](d1){$d_1$}(2*\l, -2.1*\l);
\draw[|->, thick] (3.5*\l, 0)--node[above](d3){$d_3$}(4.5*\l, 0);
\draw[|->, thick] (3.5*\l, -3*\l)--node[below](d2){$d_2$}(4.4*\l, -3*\l);
\draw[|->, thick] (5.5*\l, -1*\l)--node[right](d1r){$d_1$}(5.5*\l, -2.2*\l);
\end{tikzpicture}
\end{center}

However, $X$ is not 1-Segal, as can be illustrated by the diagrams
\begin{center}
\begin{tikzpicture}
\def\l{1cm} 
\begin{scope}
\draw (\l,0) node [vnode=$a$](a){};
\draw (2*\l,0) node [vnode=$b$](b){};
\draw (\l,0) node[vellipsefirstone=\l]{};
\draw (2*\l, 0) node[vellipsesecondone=\l]{};
\draw (-0.3*\l,-0.7*\l) node [vnode=$a$](a2){};
\draw (3.3*\l,-0.7*\l) node [vnode=$b$](b2){};
\draw (-0.3*\l, -0.7*\l) node[vellipsefirstone=\l]{};
\draw (3.3*\l,-0.7*\l) node[vellipsefirstone=\l]{};
\draw[|->, shorten >=0.35cm, shorten <=0.35cm, thick] (a)--node[above, xshift=-0.1cm]{$d_2$}(a2);
\draw[|->, shorten >=0.35cm, shorten <=0.35cm, thick] (b)--node[above, xshift=0.1cm]{$d_0$}(b2);
\end{scope}

\draw (4.2*\l, 0) node{$\quad$and$\quad$};

\begin{scope}[xshift=5.5*\l]
\draw (\l,0) node [vnode=$a$](a){};
\draw (2*\l,0) node [vnode=$b$](b){};
\draw (a)--(b);
\draw (\l,0) node[vellipsefirstone=\l]{};
\draw (2*\l, 0) node[vellipsesecondone=\l]{};
\draw (-0.3*\l,-0.7*\l) node [vnode=$a$](a2){};
\draw (3.3*\l,-0.7*\l) node [vnode=$b.$](b2){};
\draw (-0.3*\l, -0.7*\l) node[vellipsefirstone=\l]{};
\draw (3.3*\l,-0.7*\l) node[vellipsefirstone=\l]{};
\draw[|->, shorten >=0.35cm, shorten <=0.35cm, thick] (a)--node[above, xshift=-0.1cm]{$d_2$}(a2);
\draw[|->, shorten >=0.35cm, shorten <=0.35cm, thick] (b)--node[above, xshift=0.1cm]{$d_0$}(b2);
\end{scope}
\end{tikzpicture}
\end{center}

In fact, this example shows that, unlike for the other two examples given in this section, the 1-Segal map 
\[ X_2 \rightarrow X_1 \times_{X_0} X_1 \]
is not injective.

One can check, however, that these 2-Segal sets arising from graphs are always unital.
\end{ex}

\section{Augmented stable double categories}  \label{sec:doublecats}

Our aim is to give a description of unital $2$-Segal sets in terms of categorical structures to which we can apply some version of Waldhausen $\sdot$-construction.   Here, we work with structures that have two different kinds of morphisms on the same set of objects, which are better described in terms of double categories. Double categories were first defined by Ehresmann \cite{Ehresmann}, and good introductory accounts can be found in \cite{FiorePaoliPronk} and \cite{GP}. We begin by recalling the definition. 

\begin{defn}  \label{doublecat}
 A \emph{(small) double category} is an internal category in the category of small categories.
\end{defn}

\begin{rmk}\label{doublecatremark}
Roughly speaking, being an internal category in the category of small categories means that a double category consists of a ``category of objects" and a ``category of morphisms".  However, this definition obscures the symmetry between the two kinds of morphisms. Hence, it is useful to unpack this definition further to see that a double category $\cD$ consists of sets of objects $\Ob{\cD}$, horizontal morphisms $\Hor{\cD}$, vertical morphisms $\Ver{\cD}$, and squares $\Sq{\cD}$, which are connected by various source, target, identity and composition maps. 

In particular, $(\Ob{\cD},\Hor{\cD})$ and $(\Ob{\cD},\Ver{\cD})$ form categories which we denote by $\Horz{\cD}$ and $\Verz{\cD}$, respectively. The subscript 0 indicates that each of these categories can be considered as a category of objects in a category internal to categories, as we explain below.  We denote horizontal and vertical morphisms by 
\[\begin{tikzpicture}\draw[mono] (0,0) -- (1,0);\end{tikzpicture} \quad\mbox{and}\quad \begin{tikzpicture}[baseline=(a)]\draw[epi] (0,0) --node (a) {} (0,-1);\end{tikzpicture}, 
\]
respectively. Squares are depicted by diagrams of the following form:
\begin{center}
  \begin{tikzpicture}[scale=0.8]
    \def\l{2cm}
    \begin{scope}
   \draw[fill] (0,0) circle (1pt) node (b0){};
   \draw[fill] (-\l,\l) circle (1pt) node (b2){};
   \draw[fill] (-\l,0) circle (1pt) node (b3){};
   \draw[fill] (0,\l) circle (1pt) node (b1){};

   \draw[epi] (b1)--node[anchor=west](x01){$k$}(b0);
   \draw[mono] (b2)--node[anchor=south](x12){$f$}(b1);
   \draw[epi] (b2)--node[anchor=east](x23){$j$}(b3);
   \draw[mono] (b3)--node[anchor=north](x03){$g$}(b0);

   \draw[twoarrowlonger] (-0.8*\l, 0.8*\l)--node[above, xshift=0.1cm]{$\alpha$}(-0.2*\l,0.2*\l);
   
   \draw[right of=b0, xshift=-0.75cm, yshift=-0.08cm] node (sc){.};
   \end{scope}
  \end{tikzpicture}
 \end{center}
There are horizontal and vertical source and target maps 
\[
s_h, t_h\colon \Sq{\cD}\to \Ver{\cD} \mbox{ and } s_v, t_v\colon \Sq{\cD}\to \Hor{\cD}
\]
given by, for example, $s_h(\alpha)=j$.  Note that the horizontal source and target of a square are vertical morphisms, and similarly the vertical source and target of a square are horizontal morphisms. In a double category we have horizontal composition of squares
\[\circ_{h}\colon \Sq{\cD}\times_{\Ver{\cD}} \Sq{\cD}\to \Sq{\cD}\] 
and vertical composition of squares
\[\circ_{v}\colon \Sq{\cD}\times_{\Hor{\cD}} \Sq{\cD}\to \Sq{\cD}.\]
These compositions are compatible with the compositions in $\Horz{\cD}$ and $\Verz{\cD}$ via source and target maps.  The compatibility allows depicting horizontal and vertical composition as 
  \begin{center}
  \begin{tikzpicture}[scale=0.8, baseline=(x01)]
    \def\l{2cm}
\begin{scope}
    \begin{scope}
   \draw[fill] (0,0) circle (1pt) node (b0){};
   \draw[fill] (-\l,\l) circle (1pt) node (b2){};
   \draw[fill] (-\l,0) circle (1pt) node (b3){};
   \draw[fill] (0,\l) circle (1pt) node (b1){};

   \draw[epi] (b1)--node[anchor=west](x01){}(b0);
   \draw[mono] (b2)--node[anchor=south](x12){}(b1);
   \draw[epi] (b2)--node[anchor=east](x23){}(b3);
   \draw[mono] (b3)--node[anchor=north](x03){}(b0);

   \draw[twoarrowlonger] (-0.8*\l, 0.8*\l)--(-0.2*\l,0.2*\l);
   \end{scope}
    \begin{scope}[xshift=2cm]
   \draw[fill] (0,0) circle (1pt) node (b0){};
   \draw[fill] (-\l,\l) circle (1pt) node (b2){};
   \draw[fill] (-\l,0) circle (1pt) node (b3){};
   \draw[fill] (0,\l) circle (1pt) node (b1){};

   \draw[epi] (b1)--node[anchor=west](x01){}(b0);
   \draw[mono] (b2)--node[anchor=south](x12){}(b1);
   \draw[epi] (b2)--node[anchor=east](x23){}(b3);
   \draw[mono] (b3)--node[anchor=north](x03){}(b0);

   \draw[twoarrowlonger] (-0.8*\l, 0.8*\l)--(-0.2*\l,0.2*\l);
   \end{scope}
\end{scope}
  \end{tikzpicture}

\quad\mbox{and}\quad

\begin{tikzpicture}[scale=0.8, baseline=(b1)]
    \def\l{2cm}
\begin{scope}
    \begin{scope}
   \draw[fill] (0,0) circle (1pt) node (b0){};
   \draw[fill] (-\l,\l) circle (1pt) node (b2){};
   \draw[fill] (-\l,0) circle (1pt) node (b3){};
   \draw[fill] (0,\l) circle (1pt) node (b1){};

   \draw[epi] (b1)--node[anchor=west](x01){}(b0);
   \draw[mono] (b2)--node[anchor=south](x12){}(b1);
   \draw[epi] (b2)--node[anchor=east](x23){}(b3);
   \draw[mono] (b3)--node[anchor=north](x03){}(b0);

   \draw[twoarrowlonger] (-0.8*\l, 0.8*\l)--(-0.2*\l,0.2*\l);
   \end{scope}
    
\end{scope}
\begin{scope}[yshift=-2cm]
    \begin{scope}
   \draw[fill] (0,0) circle (1pt) node (b0){};
   \draw[fill] (-\l,\l) circle (1pt) node (b2){};
   \draw[fill] (-\l,0) circle (1pt) node (b3){};
   \draw[fill] (0,\l) circle (1pt) node (b1){};

   \draw[epi] (b1)--node[anchor=west](x01){}(b0);
   \draw[mono] (b2)--node[anchor=south](x12){}(b1);
   \draw[epi] (b2)--node[anchor=east](x23){}(b3);
   \draw[mono] (b3)--node[anchor=north](x03){}(b0);

   \draw[twoarrowlonger] (-0.8*\l, 0.8*\l)--(-0.2*\l,0.2*\l);
   \draw[right of=b0, xshift=-0.75cm, yshift=-0.08cm] node (sc){.};
   \end{scope}

\end{scope}
  \end{tikzpicture}
 \end{center}
 
 There are also horizontal and vertical identity squares, respectively, which can be depicted by the following diagrams: 
  \begin{center}
  \begin{tikzpicture}[scale=0.8]
    \def\l{2cm}
    \begin{scope}
   \draw[fill] (0,0) circle (1pt) node (b0){\,};
   \draw[fill] (-\l,\l) circle (1pt) node (b2){};
   \draw[fill] (-\l,0) circle (1pt) node (b3){};
   \draw[fill] (0,\l) circle (1pt) node (b1){};

   \draw[epi] (b1)--node[anchor=west](x01){$j$}(b0);
   \draw[mono] (b2)--node[anchor=south](x12){$\id^h$}(b1);
   \draw[epi] (b2)--node[anchor=east](x23){$j$}(b3);
   \draw[mono] (b3)--node[anchor=north](x03){$\id^h$}(b0);

   \draw[twoarrowlonger] (-0.8*\l, 0.8*\l)--(-0.2*\l,0.2*\l);
   \draw (-0.3*\l, 0.6*\l)node(middle){$\id^h_j$};
   \end{scope}

      \draw (1.5, 0.5*\l) node{ and };
   
    \begin{scope}[xshift=5cm]
   \draw[fill] (0,0) circle (1pt) node (b0){\,\, .};
   \draw[fill] (-\l,\l) circle (1pt) node (b2){};
   \draw[fill] (-\l,0) circle (1pt) node (b3){};
   \draw[fill] (0,\l) circle (1pt) node (b1){};

   \draw[epi] (b1)--node[anchor=west](x01){$\id^v$}(b0);
   \draw[mono] (b2)--node[anchor=south](x12){$f$}(b1);
   \draw[epi] (b2)--node[anchor=east](x23){$\id^v$}(b3);
   \draw[mono] (b3)--node[anchor=north](x03){$f$}(b0);

   \draw[twoarrowlonger] (-0.8*\l, 0.8*\l)--(-0.2*\l,0.2*\l);
   \draw (-0.3*\l, 0.6*\l)node(middle){$\id^v_f$};
   
   \end{scope}
  \end{tikzpicture}
 \end{center}
 
This data is further subject to the following axioms:
 \begin{itemize} 
 \item associativity of the compositions $\circ_{h}$ and $\circ_v$; 
 \item unitality of $\circ_h$ and $\circ_v$: the identity squares $\id^h$ and $\id^v$ act as the identity for horizontal and vertical composition of squares, respectively; and  
 \item interchange law: given squares as in the following diagram, the order of composition does not matter, i.e.,~we can first compose either horizontally or vertically: 
  \begin{center}
  \begin{tikzpicture}[scale=0.8]
    \def\l{2cm}
\begin{scope}
    \begin{scope}
   \draw[fill] (0,0) circle (1pt) node (b0){};
   \draw[fill] (-\l,\l) circle (1pt) node (b2){};
   \draw[fill] (-\l,0) circle (1pt) node (b3){};
   \draw[fill] (0,\l) circle (1pt) node (b1){};

   \draw[epi] (b1)--node[anchor=west](x01){}(b0);
   \draw[mono] (b2)--node[anchor=south](x12){}(b1);
   \draw[epi] (b2)--node[anchor=east](x23){}(b3);
   \draw[mono] (b3)--node[anchor=north](x03){}(b0);

   \draw[twoarrowlonger] (-0.8*\l, 0.8*\l)--(-0.2*\l,0.2*\l);
   \end{scope}
    \begin{scope}[xshift=2cm]
   \draw[fill] (0,0) circle (1pt) node (b0){};
   \draw[fill] (-\l,\l) circle (1pt) node (b2){};
   \draw[fill] (-\l,0) circle (1pt) node (b3){};
   \draw[fill] (0,\l) circle (1pt) node (b1){};

   \draw[epi] (b1)--node[anchor=west](x01){}(b0);
   \draw[mono] (b2)--node[anchor=south](x12){}(b1);
   \draw[epi] (b2)--node[anchor=east](x23){}(b3);
   \draw[mono] (b3)--node[anchor=north](x03){}(b0);

   \draw[twoarrowlonger] (-0.8*\l, 0.8*\l)--(-0.2*\l,0.2*\l);
   \end{scope}
\end{scope}
\begin{scope}[yshift=-2cm]
    \begin{scope}
   \draw[fill] (0,0) circle (1pt) node (b0){};
   \draw[fill] (-\l,\l) circle (1pt) node (b2){};
   \draw[fill] (-\l,0) circle (1pt) node (b3){};
   \draw[fill] (0,\l) circle (1pt) node (b1){};

   \draw[epi] (b1)--node[anchor=west](x01){}(b0);
   \draw[mono] (b2)--node[anchor=south](x12){}(b1);
   \draw[epi] (b2)--node[anchor=east](x23){}(b3);
   \draw[mono] (b3)--node[anchor=north](x03){}(b0);

   \draw[twoarrowlonger] (-0.8*\l, 0.8*\l)--(-0.2*\l,0.2*\l);
   \end{scope}
    \begin{scope}[xshift=2cm]
   \draw[fill] (0,0) circle (1pt) node (b0){};
   \draw[fill] (-\l,\l) circle (1pt) node (b2){};
   \draw[fill] (-\l,0) circle (1pt) node (b3){};
   \draw[fill] (0,\l) circle (1pt) node (b1){};

   \draw[epi] (b1)--node[anchor=west](x01){}(b0);
   \draw[mono] (b2)--node[anchor=south](x12){}(b1);
   \draw[epi] (b2)--node[anchor=east](x23){}(b3);
   \draw[mono] (b3)--node[anchor=north](x03){}(b0);

   \draw[twoarrowlonger] (-0.8*\l, 0.8*\l)--(-0.2*\l,0.2*\l);
   
      \draw[right of=b0, xshift=-0.75cm, yshift=-0.08cm] node (sc){.};
   \end{scope}
\end{scope}
  \end{tikzpicture}
 \end{center}
 \end{itemize}

From a double category $\cD$ we can extract two categories $\Horo{\cD}$ and $\Vero{\cD}$ corresponding to horizontal and vertical composition of squares, respectively. For example, the category $\Horo{\cD}$ has $\Ver{\cD}$ as the set of objects and $\Sq{\cD}$ as the set of morphisms, with source, target, identity, and composition given by $s_h$, $t_h$, $\id^h$, and $\circ_h$, respectively. In fact, the categories $\Verz{\cD}$ and $\Vero{\cD}$ are the ``categories of objects'' and ``categories of morphisms'', respectively, of the category internal to categories $\cD$. Similarly, the pair $(\Horz{\cD},\Horo{\cD})$ also forms a category internal to categories, which is the \emph{transpose} of $\cD$.

The following diagram, adapted from \cite{GP}, gives a useful depiction of some the sets and categories that are associated to a double category and which have been described above.
\[ \xymatrix{
\Ob{\cD} \ar[r] \ar[d] & \Hor{\cD} \ar@<1ex>[l]^{t_h} \ar@<-1ex>[l]_{s_h} \ar[d]  & \Horz{\cD} \ar[d]\\
\Ver{\cD} \ar[r]  \ar@<1ex>[u]^{t_v} \ar@<-1ex>[u]_{s_v}  & \Sq{\cD} \ar@<1ex>[l]^{t_h} \ar@<-1ex>[l]_{s_h}  \ar@<1ex>[u]^{t_v} \ar@<-1ex>[u]_{s_v}  & \Horo{\cD} \ar@<1ex>[u] \ar@<-1ex>[u] \\
\Verz{\cD} \ar[r]  & \Vero{\cD} \ar@<1ex>[l] \ar@<-1ex>[l] 
}\]
\end{rmk}

\begin{ex}
If $\mathcal E$ is an exact category, the full subcategories of admissible monomorphisms and admissible epimorphisms assemble to the data of a double category, where the collection of distinguished squares is given by all commutative squares which are simultaneously pullbacks and pushouts.
\end{ex}

We can also consider appropriate functors between double categories. 
\begin{defn}
A \emph{double functor} between double categories is an internal functor.
\end{defn}

In other words, a double functor consists of an assignment of objects, horizontal and vertical morphisms, and squares, all of which are compatible with all source, target, composition, and unit maps.

To define a meaningful version of the Waldhausen $\sdot$-construction, we need to impose further conditions on double categories.  Recall that an exact category has a zero object, namely an object which is both initial and terminal.  Since categories with zero objects are often referred to as pointed, we use this terminology for our generalization to double categories.

\begin{defn}
A double category $\cD$ is \emph{pointed} if it is equipped with a fixed distinguished object $*$ which is both initial in $\Horz{\cD}$ and terminal in $\Verz{\cD}$.  A double functor $F\colon \cD \to \cD'$ between pointed double categories is \emph{pointed} if it sends the distinguished object of $\cD$ to the distinguished object of $\cD'$.
\end{defn} 

In order to obtain all 2-Segal sets, and not just those with a single 0-simplex, we will have need of a more general notion than pointedness.   

\begin{defn}
An \emph{augmentation} of a double category $\cD$ consists of a set of objects $\aug$ satisfying the condition that for every object $d$ of $\cD$ there are unique morphisms $a\mono d $ in $\Hor{\cD}$ and $d \epi a'$ in $\Ver{\cD}$ such that $a$ and $a'$ are in $\aug$.

An \emph{augmented double category} is a double category equipped with a choice of augmentation.   A double functor between augmented double categories is \emph{augmented} if objects in the augmentation of the source are sent to objects in the augmentation of the target.
\end{defn}

\begin{rmk}
Note that if the augmentation set is a singleton, so $\aug=\{*\}$, the definition reduces to being pointed by the single object $*$.
\end{rmk}

The following result is a reformulation of the notion of augmentation in categorical terms. 

\begin{prop}\label{augpb}
A subset $\aug$ of $\Ob{\cD}$ is an augmentation for the double category $\cD$  if and only if there exist maps
\begin{align*}
f\colon \Ob{\cD} & \longrightarrow \Hor{\cD} & g\colon \Ob{\cD} & \longrightarrow \Ver{\cD}\\
\delta^h\colon \Ob{\cD} & \longrightarrow \aug & \delta^v\colon \Ob{\cD} & \longrightarrow \aug,
\end{align*}
such that $t_h\circ f = \id$ and $s_v \circ g=\id$, and the two diagrams
\[\xymatrixrowsep{.8pc}\xymatrix{
\Ob{\cD}\ar@{-->}[r]^{\delta^h} \ar@{-->}[dd]_{f}&\aug\ar@{^{(}->}[dd]&& \Ob{\cD}\ar@{-->}[r]^{\delta^v} \ar@{-->}[dd]_{g}&\aug\ar@{^{(}->}[dd]\\
&&\text{and}\\
\Hor{\cD}\ar[r]_-{s_h}&\Ob{\cD}&& \Ver{\cD}\ar[r]_-{t_v}&\Ob{\cD}
}
\]
commute and are pullback diagrams.

Equivalently, $A$ is an augmentation for $\cD$ if and only if the maps
$$t_h\circ \mbox{pr}_1: \Hor{\cD}\times_{\Ob{\cD}} \aug \to \Ob{\cD} \quad\mbox{and}\quad s_h\circ \mbox{pr}_2: \Ver{\cD}\times_{\Ob{\cD}} \aug \to \Ob{\cD}$$
are bijections.
\end{prop}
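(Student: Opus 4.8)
The plan is to first rephrase the augmentation condition as a pair of bijection statements, which immediately yields the final ``equivalently'' characterization, and then to match these bijections against the two pullback squares. Unpacking the definition, $\aug$ is an augmentation precisely when two independent conditions hold: for every object $d$ there is a unique horizontal morphism $a \mono d$ with $a \in \aug$, and for every object $d$ there is a unique vertical morphism $d \epi a'$ with $a' \in \aug$. I would record each of these as a bijectivity statement. Writing $\Hor{\cD}\times_{\Ob{\cD}}\aug$ for the pullback of the horizontal source map $s_h \colon \Hor{\cD}\to\Ob{\cD}$ along the inclusion $\aug \hookrightarrow \Ob{\cD}$, this fiber product is exactly the set of horizontal morphisms whose source lies in $\aug$; the horizontal uniqueness condition says precisely that the target map $t_h\circ\mathrm{pr}_1$ from this set to $\Ob{\cD}$ is a bijection. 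Dually, $\Ver{\cD}\times_{\Ob{\cD}}\aug$ is the set of vertical morphisms whose target lies in $\aug$, and the vertical condition says the associated source map $\Ver{\cD}\times_{\Ob{\cD}}\aug \to \Ob{\cD}$ is a bijection. This is the content of the last displayed equivalence, so that reformulation requires only this unpacking of ``unique'' into ``bijective.''

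For the pullback-square formulation I would argue the horizontal case in full, the vertical case being formally dual, and show that the existence of $f$ and $\delta^h$ with $t_h\circ f=\id$ making the left square a pullback is equivalent to bijectivity of $t_h \colon \Hor{\cD}\times_{\Ob{\cD}}\aug \to \Ob{\cD}$. In one direction, given such a bijection I set $f$ to be its inverse followed by the inclusion into $\Hor{\cD}$, and put $\delta^h := s_h\circ f$; then $\delta^h$ lands in $\aug$, the square commutes by construction, $t_h\circ f=\id$ holds, and since $f$ identifies $\Ob{\cD}$ bijectively with the pullback $\Hor{\cD}\times_{\Ob{\cD}}\aug$, the square is a pullback. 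Conversely, if the square is a pullback then the induced comparison map $\Ob{\cD}\to \Hor{\cD}\times_{\Ob{\cD}}\aug$, namely $d\mapsto f(d)$, is a bijection; postcomposing with $t_h$ and using $t_h\circ f=\id$ forces $t_h$ restricted to the pullback to be the inverse of this bijection, hence itself a bijection. The same argument with $s_v$, $g$, and $\delta^v$ handles the right square.

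The only real subtlety, and the step I would be most careful about, is the role of the section conditions $t_h\circ f=\id$ and $s_v\circ g=\id$. A pullback square on its own merely asserts an abstract bijection $\Ob{\cD}\cong \Hor{\cD}\times_{\Ob{\cD}}\aug$, which records existence and uniqueness of a horizontal morphism \emph{with source in $\aug$} but says nothing about matching targets. It is the equation $t_h\circ f=\id$ that pins the comparison bijection down to be the inverse of $t_h$, thereby encoding that the unique such morphism has the prescribed target $d$; without it the pullback condition would be strictly weaker than augmentation. Tracking the source/target conventions for horizontal versus vertical morphisms carefully throughout is the principal bookkeeping hazard, but I expect no genuine difficulty beyond it, since the whole proposition is essentially a translation between three equivalent ways of saying ``unique lift.''
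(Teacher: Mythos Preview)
Your proposal is correct. The paper states this proposition without proof, treating it as a direct reformulation of the definition of augmentation; your argument supplies exactly the unpacking one would expect, and your observation about the role of the section conditions $t_h\circ f=\id$ and $s_v\circ g=\id$ is the one nontrivial point to get right.
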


\begin{rmk}
In particular, the maps $f$ and $\delta^h$ of \cref{augpb} make the nerve of the category $\Horz{\cD}$ into an augmented simplicial set with an extra degeneracy, more precisely, with a backward contracting homotopy,
\[\begin{tikzcd}
A \arrow[r, hook, bend left =50, ""] & \Ob{\cD} \arrow[l, "\delta^h"] \arrow[r, arrowshorter]  \arrow [r, bend left =50, "f"] & \Hor{\cD} \arrow [r, bend left =50, "f\times \id"] \arrow[l, arrow, shift left=1ex] \arrow[l, arrow, shift right=1ex] \arrow[r, arrowshorter, shift left=0.75ex] \arrow[r, arrowshorter, shift right=0.75ex] & 
\Hor{\cD} \times_{\Ob{\cD}} \Hor{\cD} \arrow[l, arrow] \arrow[l, arrow, shift left=1.5ex] \arrow[l, arrow, shift right=1.5ex] \arrow[r, phantom, "\cdots"] & {},
\end{tikzcd}\]
where $f\times \id: \Hor{\cD} \cong \Ob{\cD}\times_{\Ob{\cD}} \Hor{\cD} \to \Hor{\cD} \times_{\Ob{\cD}} \Hor{\cD}$, and similarly for the other extra degeneracy maps.

Similarly, $g$ and $\delta^v$ make the nerve of the category $\Verz{\cD}$ into an augmented simplicial set with a forward contracting homotopy given by $g$,
\[\begin{tikzcd}
A \arrow[r, hook, bend right =50, ""] & \Ob{\cD} \arrow[l, "\delta^v"] \arrow[r, arrowshorter]  \arrow [r, bend right =50, "g", swap] & \Ver{\cD} \arrow [r, bend right =50, swap, "\id\times g"] \arrow[l, arrow, shift left=1ex] \arrow[l, arrow, shift right=1ex] \arrow[r, arrowshorter, shift left=0.75ex] \arrow[r, arrowshorter, shift right=0.75ex] & 
\Ver{\cD} \times_{\Ob{\cD}} \Ver{\cD} \arrow[l, arrow] \arrow[l, arrow, shift left=1.5ex] \arrow[l, arrow, shift right=1.5ex] \arrow[r, phantom, "\cdots"] & {}.
\end{tikzcd}\]
\end{rmk}

In the definition of the $\sdot$-construction for exact categories, certain squares are required to be both pullbacks and pushouts.  In other words, such a square is determined, up to isomorphism, by its span; it is similarly determined by its cospan.  In a double category, this condition does not make sense, since in general the horizontal and vertical morphisms in a double category need not be morphisms in a common category, so the span or cospan of a square may not be given by a diagram in a category. Furthermore, pullbacks and pushouts are only determined up to isomorphism, but we need uniqueness on the nose in the discrete setting. 

\begin{defn}
A double category is \emph{stable} if every square is uniquely determined by its span of source morphisms and, independently, by its cospan of target morphisms.  More precisely, we require the maps
\[
 \begin{aligned}
  (s_h, s_v)\colon \Sq\cD \to \Ver{\cD} \times_{\Ob{\cD}} \Hor{\cD} \\
  (t_h, t_v)\colon \Sq\cD \to \Ver{\cD} \times_{\Ob{\cD}} \Hor{\cD}
 \end{aligned}
\]
depicted by
 \begin{center}
  \begin{tikzpicture}[scale=0.6]
    \def\l{2cm}
    \begin{scope}
   \draw[fill] (0,0) circle (1pt) node (b0){};
   \draw[fill] (-\l,\l) circle (1pt) node (b2){};
   \draw[fill] (-\l,0) circle (1pt) node (b3){};
   \draw[fill] (0,\l) circle (1pt) node (b1){};

   \draw[epi] (b1)--node[anchor=west](x01){}(b0);
   \draw[mono] (b2)--node[anchor=south](x12){}(b1);
   \draw[epi] (b2)--node[anchor=east](x23){}(b3);
   \draw[mono] (b3)--node[anchor=north](x03){}(b0);

   \draw[twoarrowlonger] (-0.8*\l, 0.8*\l)--(-0.2*\l,0.2*\l);
   \end{scope}
\draw[|->] (0.1*\l, 0.5*\l)--node[above](op1){}(0.4*\l,0.5*\l);
    \begin{scope}[xshift=1.5*\l]
   \draw[fill] (-\l,\l) circle (1pt) node (b2){};
   \draw[fill] (-\l,0) circle (1pt) node (b3){};
  \draw[fill] (0,\l) circle (1pt) node (b1){};

    \draw[mono] (b2)--node[anchor=south](x12){}(b1);
   \draw[epi] (b2)--node[anchor=east](x23){}(b3);
   \end{scope}
\draw (2.5*\l, 0.5*\l) node{and};
    \begin{scope}[xshift=4.5*\l]
   \draw[fill] (0,0) circle (1pt) node (b0){};
   \draw[fill] (-\l,\l) circle (1pt) node (b2){};
   \draw[fill] (-\l,0) circle (1pt) node (b3){};
   \draw[fill] (0,\l) circle (1pt) node (b1){};

   \draw[epi] (b1)--node[anchor=west](x01){}(b0);
   \draw[mono] (b2)--node[anchor=south](x12){}(b1);
   \draw[epi] (b2)--node[anchor=east](x23){}(b3);
   \draw[mono] (b3)--node[anchor=north](x03){}(b0);

   \draw[twoarrowlonger] (-0.8*\l, 0.8*\l)--(-0.2*\l,0.2*\l);
   \end{scope}

\draw[|->] (4.6*\l, 0.5*\l)--node[above](op1){}(4.9*\l,0.5*\l);
    \begin{scope}[xshift=6*\l]
   \draw[fill] (0,0) circle (1pt) node (b0){};
   \draw[fill] (-\l,0) circle (1pt) node (b3){};
   \draw[fill] (0,\l) circle (1pt) node (b1){};

   \draw[epi] (b1)--node[anchor=west](x01){}(b0);
   \draw[mono] (b3)--node[anchor=north](x03){}(b0);
   \end{scope}
\end{tikzpicture}
 \end{center}
to be bijections. 
\end{defn}
\begin{rmk}
The double category associated to an exact category is not stable, except for trivial examples. Indeed, a cartesian square is determined by the corresponding span only up to isomorphism.
\end{rmk}

An important consequence of stability, which we now show, is that the set of squares and their horizontal and vertical compositions are essentially determined by the rest of the data.

Let $\cH$ and $\cV$ be categories with the same set of objects $\Ob$, and $\Sq$ a set together with maps 
\[
s_h, t_h\colon \Sq\to\Mor \cV \quad \mbox{ and } \quad s_v, t_v\colon \Sq\to\Mor \cH,
\]
which we call horizontal and vertical source and target maps, respectively. As suggested by the name, the set $\Sq$ is a proposed set of squares for a double category. Assume that these maps are compatible with the source and target maps in $\cH$ and $\cV$, as in the definition of a double category (see \cref{doublecatremark}). For example, $s_{\cH}s_v=s_{\cV}s_h$. In other words, we are given some of the data and axioms for a double category as detailed in \cref{doublecatremark}, missing precisely horizontal and vertical compositions of squares and existence of horizontal and vertical identity squares. Furthermore, assume that the stability condition holds, i.e.,~the two maps
\begin{equation}\label{eqn_stability}\xymatrix{\Mor\cH\times_{\Ob} \Mor\cV&\ar[l]_-{(t_v,t_h)}\Sq\ar[r]^-{(s_h,s_v)}&\Mor\cV\times_{\Ob} \Mor\cH}
\end{equation}
are bijections.

Then there are two horizontal ``compositions'' of squares defined using composition in $\cH$ and two vertical ``compositions''\footnote{A priori, these maps are not fully compatible with source and target, and hence do not deserve to be called composition.} of squares defined using composition in $\cV$, as follows.

We define the first horizontal composition of squares using the bijection in \eqref{eqn_stability} given by $(s_h, s_v)$ and the second using the bijection in \eqref{eqn_stability} given by $(t_v, t_h)$:
\begin{equation}\label{h1h2}
\begin{tikzcd}[column sep=1.5cm]
\circ_h^1: \Sq \times_{\Mor \cV} \Sq \arrow[r, "{(s_h, s_v)\times s_v}"]& (\Mor\cV \times_{\Ob} \Mor \cH) \times_{\Ob} \Mor\cH \arrow{d}{\id \times \circ_{\cH}} \\
& \Mor\cV \times_{\Ob} \Mor\cH \arrow[r, "{(s_h, s_v)^{-1}}", "\cong"'] &[-1.7cm] \Sq , \\
\circ_h^2: \Sq \times_{\Mor \cV} \Sq \arrow[r, "{t_v \times (t_v,t_h)}"]& \Mor\cH \times_{\Ob} (\Mor \cH \times_{\Ob} \Mor\cV) \arrow{d}{ \circ_{\cH}\times \id} \\
& \Mor\cH \times_{\Ob} \Mor\cV \arrow[r, "{(t_v, t_h)^{-1}}", "\cong"'] & \Sq . 
\end{tikzcd}
\end{equation}
Pictorially, the two maps can be understood as follows:
 \begin{center}
   
  \begin{tikzpicture}[scale=0.5]
    \def\l{2cm}
    \begin{scope}
   \draw[fill] (0,0) circle (1pt) node (b0){};
   \draw[fill] (-\l,\l) circle (1pt) node (b2){};
   \draw[fill] (-\l,0) circle (1pt) node (b3){};
   \draw[fill] (0,\l) circle (1pt) node (b1){};
   
   \draw[fill] (\l,\l) circle (1pt) node (b4){};
   \draw[fill] (\l,0) circle (1pt) node (b5){};

   \draw[epi] (b1)--node[anchor=west](x01){}(b0);
   \draw[mono] (b2)--node[anchor=south](x12){}(b1);
   \draw[epi] (b2)--node[anchor=east](x23){}(b3);
   \draw[mono] (b3)--node[anchor=north](x03){}(b0);
   
   \draw[epi] (b4) -- (b5);
   \draw[mono] (b0) -- (b5);
   \draw[mono] (b1) -- (b4); 

   \draw[twoarrowlonger] (-0.8*\l, 0.8*\l)--(-0.2*\l,0.2*\l);
   \draw[twoarrowlonger] (0.2*\l, 0.8*\l)--(0.8*\l,0.2*\l);
   
   \end{scope}
\draw[|->] (1.25*\l, 0.5*\l)--node[above](op1){}(1.75*\l,0.5*\l);
    \begin{scope}[xshift=3*\l]
   \draw[fill] (-\l,\l) circle (1pt) node (b2){};
   \draw[fill] (-\l,0) circle (1pt) node (b3){};
   \draw[fill] (0,\l) circle (1pt) node (b1){};
   
   \draw[fill] (\l,\l) circle (1pt) node (b4){};

    \draw[mono] (b2)--node[anchor=south](x12){}(b1);
   \draw[epi] (b2)--node[anchor=east](x23){}(b3);
      \draw[mono] (b1) -- (b4);
   \end{scope}
\draw[|->] (4.5*\l, 0.5*\l)--node[above](op1){}(5*\l,0.5*\l);   

    \begin{scope}[xshift=6.5*\l]
   \draw[fill] (-\l,\l) circle (1pt) node (b2){};
   \draw[fill] (-\l,0) circle (1pt) node (b3){};
   
   \draw[fill] (\l,\l) circle (1pt) node (b4){};

    \draw[mono] (b2)--node[anchor=south](x12){}(b4);
   \draw[epi] (b2)--node[anchor=east](x23){}(b3);
    \end{scope}
\draw[|->] (7.75*\l, 0.5*\l)--node[above](op1){$\cong$} (8.25*\l,0.5*\l);

\begin{scope}[xshift=9.75*\l]
   \draw[fill] (-\l,\l) circle (1pt) node (b2){};
   \draw[fill] (-\l,0) circle (1pt) node (b3){};
   
    \draw[fill] (\l,\l) circle (1pt) node (b4){};
    \draw[fill] (\l,0) circle (1pt) node (b5){};

    \draw[mono] (b2)--node[anchor=south](x12){}(b4);
    \draw[epi] (b2)--node[anchor=east](x23){}(b3);
    \draw[epi] (b4) -- (b5);
    \draw[mono] (b3)--node[anchor=south](x12){}(b5);
    
    \draw[twoarrowlonger] (-0.8*\l, 0.8*\l) -- (0.8*\l,0.2*\l);
    \end{scope}
\end{tikzpicture}
\begin{tikzpicture}[scale=0.5]
    \def\l{2cm}
    \begin{scope}

   \draw[fill] (0,0) circle (1pt) node (b0){};
   \draw[fill] (-\l,\l) circle (1pt) node (b2){};
   \draw[fill] (-\l,0) circle (1pt) node (b3){};
   \draw[fill] (0,\l) circle (1pt) node (b1){};
   
   \draw[fill] (\l,\l) circle (1pt) node (b4){};
   \draw[fill] (\l,0) circle (1pt) node (b5){};

   \draw[epi] (b1)--node[anchor=west](x01){}(b0);
   \draw[mono] (b2)--node[anchor=south](x12){}(b1);
   \draw[epi] (b2)--node[anchor=east](x23){}(b3);
   \draw[mono] (b3)--node[anchor=north](x03){}(b0);
   
   \draw[epi] (b4) -- (b5);
   \draw[mono] (b0) -- (b5);
   \draw[mono] (b1) -- (b4); 

   \draw[twoarrowlonger] (-0.8*\l, 0.8*\l)--(-0.2*\l,0.2*\l);
   \draw[twoarrowlonger] (0.2*\l, 0.8*\l)--(0.8*\l,0.2*\l);
   
   \end{scope}
\draw[|->] (1.25*\l, 0.5*\l)--node[above](op1){}(1.75*\l,0.5*\l);
    \begin{scope}[xshift=3*\l]
   \draw[fill] (0,0) circle (1pt) node (b0){};
   \draw[fill] (-\l,0) circle (1pt) node (b3){};
   
   \draw[fill] (\l,\l) circle (1pt) node (b4){};
   \draw[fill] (\l,0) circle (1pt) node (b5){};
   
     \draw[mono] (b3)--node[anchor=south](x12){}(b0);
   \draw[epi] (b4)--node[anchor=east](x23){}(b5);
       \draw[mono] (b0) -- (b5);
   \end{scope}
\draw[|->] (4.5*\l, 0.5*\l)--node[above](op1){}(5*\l,0.5*\l);   

    \begin{scope}[xshift=6.5*\l]
   \draw[fill] (-\l,0) circle (1pt) node (b3){};
   
   \draw[fill] (\l,\l) circle (1pt) node (b4){};
   \draw[fill] (\l,0) circle (1pt) node (b5){};
   
   \draw[epi] (b4)--node[anchor=east](x23){}(b5);
       \draw[mono] (b3) -- (b5);
    \end{scope}
\draw[|->] (7.75*\l, 0.5*\l)--node[above](op1){$\cong$} (8.25*\l,0.5*\l);

\begin{scope}[xshift=9.75*\l]
   \draw[fill] (-\l,\l) circle (1pt) node (b2){};
   \draw[fill] (-\l,0) circle (1pt) node (b3){};
   
    \draw[fill] (\l,\l) circle (1pt) node (b4){};
    \draw[fill] (\l,0) circle (1pt) node (b5){};

    \draw[mono] (b2)--node[anchor=south](x12){}(b4);
    \draw[epi] (b2)--node[anchor=east](x23){}(b3);
    \draw[epi] (b4) -- (b5);
    \draw[mono] (b3)--node[anchor=south](x12){}(b5);
    
    \draw[twoarrowlonger] (-0.8*\l, 0.8*\l) -- (0.8*\l,0.2*\l);
    \end{scope}
\end{tikzpicture} 

 \end{center}
Similarly, we can define two vertical ``compositions'' $\circ_v^1$ and $\circ_v^2$ of squares using composition in $\cV$.

\begin{prop} \label{stableassociative}
Let $\cH$, $\cV$, $\Sq$, $s_v$, $t_v$, $s_h$, $t_h$ be as above. Assume further that
\[\circ_h^1=\circ_h^2 \quad \text{and}\quad \circ_v^1=\circ_v^2.\]
Then this data assembles into a stable double category, i.e., associativity and unitality of both compositions and the interchange law hold.
\end{prop}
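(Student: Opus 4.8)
The plan is to reduce every axiom to a computation of boundary data followed by an appeal to stability: since both $(s_h,s_v)$ and $(t_v,t_h)$ are bijections, two squares coincide as soon as they have the same source span, or as soon as they have the same target cospan. Writing $\circ_h$ for the common value $\circ_h^1=\circ_h^2$ and $\circ_v$ for $\circ_v^1=\circ_v^2$, the first step is to record the full boundary of a composite. Applying the defining formula through $(s_h,s_v)^{-1}$ shows that $\alpha\circ_h\beta$ has source span $(s_h(\alpha),\,s_v(\beta)\circ_\cH s_v(\alpha))$, while applying the formula through $(t_v,t_h)^{-1}$ shows it has target cospan $(t_v(\beta)\circ_\cH t_v(\alpha),\,t_h(\beta))$; both hold precisely because $\circ_h^1=\circ_h^2$. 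In particular $s_h(\alpha\circ_h\beta)=s_h(\alpha)$, $t_h(\alpha\circ_h\beta)=t_h(\beta)$, and the horizontal edges compose in $\cH$, so $\circ_h$ is genuinely compatible with all four source and target maps (and symmetrically for $\circ_v$). This is what upgrades the footnoted ``compositions'' to honest ones, and it guarantees that all the iterated composites appearing below are in fact defined.

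Given this, associativity of $\circ_h$ is immediate: for a composable triple $\alpha,\beta,\gamma$ both $(\alpha\circ_h\beta)\circ_h\gamma$ and $\alpha\circ_h(\beta\circ_h\gamma)$ have source span $\bigl(s_h(\alpha),\,s_v(\gamma)\circ_\cH s_v(\beta)\circ_\cH s_v(\alpha)\bigr)$ by the boundary formulas of the previous step together with associativity in $\cH$, hence they agree by stability; the vertical case is dual, using $\cV$. The interchange law is handled the same way. For a $2\times2$ array with rows $(\alpha,\beta)$ and $(\gamma,\delta)$, I would compute that both $(\alpha\circ_h\beta)\circ_v(\gamma\circ_h\delta)$ and $(\alpha\circ_v\gamma)\circ_h(\beta\circ_v\delta)$ have source span $\bigl(s_h(\gamma)\circ_\cV s_h(\alpha),\,s_v(\beta)\circ_\cH s_v(\alpha)\bigr)$, again using only the boundary formulas and associativity in $\cH$ and $\cV$, and then invoke stability once more.

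For unitality I must first produce the identity squares, since they are not part of the given data. For a vertical morphism $j$ with source object $X$ and target object $Y$, set $\tau:=(s_h,s_v)^{-1}(j,\id_X)$ and $\sigma:=(t_v,t_h)^{-1}(\id_Y,j)$. A short source-span computation through $\circ_h^1$ shows that $\tau$ is a right unit, since $\alpha\circ_h\tau=\alpha$ whenever defined because the top edges compose to $s_v(\alpha)$; dually, a target-cospan computation through $\circ_h^2$ shows that $\sigma$ is a left unit. Since $t_h(\sigma)=j=s_h(\tau)$, the composite $\sigma\circ_h\tau$ is defined, and evaluating it two ways gives $\tau=\sigma\circ_h\tau=\sigma$; thus the single square $\id^h_j:=\tau=\sigma$ has all four edges as required and is a two-sided unit for $\circ_h$. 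The vertical identities $\id^v_f$ are constructed symmetrically.

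The main obstacle is the first step: without the hypothesis $\circ_h^1=\circ_h^2$ each formula pins down only half of the boundary of a composite, so one cannot even assert that iterated composites are defined, let alone equal. The whole argument therefore hinges on extracting, from this single equality, the complete source-and-target compatibility; once that is in hand, associativity and interchange are pure bookkeeping with stability, and the only remaining subtlety is the construction of identity squares, which the left-unit/right-unit coincidence $\tau=\sigma$ resolves cleanly.
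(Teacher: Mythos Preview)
Your proposal is correct and follows essentially the same approach as the paper's proof: both arguments first observe that the equality $\circ_h^1=\circ_h^2$ pins down all four boundary edges of a horizontal composite (and dually for $\circ_v$), then deduce associativity and interchange by comparing source spans and invoking stability. Your treatment of identity squares is also the same as the paper's---your $\tau$ and $\sigma$ are exactly the paper's $\beta$ and $\alpha$, and your left-unit/right-unit coincidence $\sigma\circ_h\tau=\sigma=\tau$ is the same computation the paper carries out when it argues that the composite of its two squares equals each of them; you simply package the two halves as ``$\tau$ is a right unit'' and ``$\sigma$ is a left unit'' before specializing.
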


\begin{proof}
Note that, by construction, $\circ_h^1$ is compatible with composition of vertical sources and with horizontal source, while $\circ_h^2$ is compatible with composition of vertical targets and with horizontal target. Thus, if $\circ_h^1=\circ_h^2$, this assignment is compatible with all source and target maps, and we can thus think of it as a composition $\circ_h$ that can be iterated. Since composition in $\cH$ is associative, stability implies associativity of composition, as both ways of associating correspond to squares with the same source span. A similar argument shows that $\circ_v=\circ_v^1=\circ_v^2$ is an associative composition compatible with source and target maps. 
 
Stability also implies the interchange law since we are comparing squares with the same source span. It remains to check the existence of horizontal and vertical identity squares. Given a vertical morphism $m$, by stability, there exist two squares as follows:
\begin{center}
\begin{tikzpicture}[baseline=(x01.base)]
    \def\l{1cm}
    \begin{scope}

  \draw[fill] (0,0) circle (1pt) node (b0){};
  \draw[fill] (-\l,\l) circle (1pt) node (b2){};
  \draw[fill] (-\l,0) circle (1pt) node (b3){};
  \draw[fill] (0,\l) circle (1pt) node (b1){};
   
  \draw[epi] (b1)--node[anchor=west](x01){$m$} (b0);
  \draw[mono] (b2)--node[anchor=south](x12){$f$}(b1);
  \draw[epi] (b2)--node[anchor=east](x23){$k$}(b3);
  \draw[mono] (b3)--node[anchor=north](x03){$\id$}(b0);
   
  \draw[twoarrowlonger] (-0.8*\l, 0.8*\l)-- node[anchor=east] {$\alpha$} (-0.2*\l,0.2*\l);

  \end{scope}
\end{tikzpicture}
\hspace{1cm}and\hspace{1cm}
\begin{tikzpicture}[baseline=(x01.mid)]
  \def\l{1cm}
    \begin{scope}[xshift=2cm]

  \draw[fill] (0,0) circle (1pt) node (b0){};

  \draw[fill] (0,\l) circle (1pt) node (b1){};
   
  \draw[fill] (\l,\l) circle (1pt) node (b4){};
  \draw[fill] (\l,0) circle (1pt) node (b5){} node[xshift=0.25cm] (labelb5){.};

  \draw[epi] (b1)--node[anchor=east](x01){$m$}(b0);

  \draw[epi] (b4) -- node[anchor=west] {$l$} (b5);
  \draw[mono] (b0) -- node [anchor=north] {$g$} (b5);
  \draw[mono] (b1) -- node[anchor=south] {$\id$} (b4); 

  \draw[twoarrowlonger] (0.2*\l, 0.8*\l)-- node[anchor=east,yshift=-0.1cm] {$\beta$} (0.8*\l,0.2*\l);
  \end{scope} 
\end{tikzpicture}
\end{center} 
Composing them, we obtain
\begin{center}
\begin{tikzpicture}[scale=0.5]
    \def\l{2cm}
    \begin{scope}

  \draw[fill] (0,0) circle (2pt) node (b0){};
  \draw[fill] (-\l,\l) circle (2pt) node (b2){};
  \draw[fill] (-\l,0) circle (2pt) node (b3){};
  \draw[fill] (0,\l) circle (2pt) node (b1){};
   
  \draw[fill] (\l,\l) circle (2pt) node (b4){};
  \draw[fill] (\l,0) circle (2pt) node (b5){};

  \draw[mono] (b2)--node[anchor=south](x12){$f$}(b1);
  \draw[epi] (b2)--node[anchor=east](x23){$k$}(b3);
  \draw[mono] (b3)--node[anchor=north](x03){$\id$}(b0);
      
  \draw[epi] (b4) -- node[anchor=west] {$l$} (b5);
  \draw[mono] (b0) -- node [anchor=north] {$g$} (b5);
  \draw[mono] (b1) -- node[anchor=south] {$\id$} (b4); 
  
  \draw[twoarrowlonger] (-0.8*\l, 0.8*\l) -- (0.8*\l,0.2*\l);
   \end{scope} 
\end{tikzpicture}
\end{center}
which is a square whose span of sources is the pair $(k,f)$, so by stability this square must be $\alpha$, and therefore its vertical target $g= g\circ \id$ is the identity $\id$ and its horizontal target $m$ must be $l$. Similarly, its cospan of targets is $(g,l=m)$, so by stability this square must be $\beta$, and its vertical source $f=\id\circ f$ must be the identity and its horizontal source $k$ must be $m$. Finally, stability again shows that $\alpha=\beta$ acts as an identity for $\circ_h$.

A similar argument shows the existence of vertical identities.
\end{proof}

\begin{rmk}\label{rmk stable double functor}
Moreover, if $\cC$ is a double category and $\cD$ is a stable double category, when defining a double functor $F \colon \cC \to \cD$, the assignment on squares is uniquely determined by the assignment on vertical and horizontal morphisms.  However, it is not true that to define a double functor, it is enough to define functors on the horizontal and vertical categories that coincide on objects; the assignment on squares still requires some compatibility between the vertical and horizontal pieces.
\end{rmk}

For double categories which are stable and augmented the necessary data can be reduced even further.

\begin{lem}
\label{bijectionhorizontalvertical}
Let $\cD$ be an augmented stable double category. Then there is a bijection between the set of horizontal arrows and the set of vertical arrows, i.e.,
\[\Hor{\cD}\cong\Ver{\cD}.\]
\end{lem}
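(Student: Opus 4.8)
The plan is to build the bijection directly from the two stability bijections and the uniqueness clause in the definition of an augmentation, by constructing maps $\Phi\colon \Hor{\cD}\to\Ver{\cD}$ and $\Psi\colon\Ver{\cD}\to\Hor{\cD}$ and then showing they are mutually inverse. To define $\Phi$, I would start with a horizontal morphism $h\colon x\to y$ and use the augmentation to pick out the unique vertical morphism $v_x\colon x\epi a'$ with $a'\in\aug$. The pair $(v_x,h)$ is then a source span, i.e.\ an element of $\Ver{\cD}\times_{\Ob{\cD}}\Hor{\cD}$ with common object $x$, so stability yields a unique square $\alpha$ with $s_h(\alpha)=v_x$ and $s_v(\alpha)=h$; I set $\Phi(h)\colon=t_h(\alpha)$, the right-hand vertical edge. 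Dually, given a vertical morphism $w\colon y\to z$, the augmentation supplies the unique horizontal morphism $h_z\colon a\mono z$ with $a\in\aug$, the pair $(w,h_z)$ is a target cospan, stability yields a unique square $\beta$ with $t_h(\beta)=w$ and $t_v(\beta)=h_z$, and I set $\Psi(w)\colon=s_v(\beta)$, the top horizontal edge.

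Next I would check that $\Psi\circ\Phi=\id$ and $\Phi\circ\Psi=\id$, which is where the uniqueness in the augmentation does the essential work. For the square $\alpha$ built from $h$, its bottom edge $t_v(\alpha)$ is a horizontal morphism whose source is the augmentation object $a'$; by uniqueness of the horizontal augmentation morphism into the object $t_h(\alpha)$'s target, this bottom edge must coincide with the canonical morphism $h_{t_h(\alpha)}$ used to define $\Psi$. Hence the target cospan of $\alpha$ is precisely the cospan $(t_h(\alpha),t_v(\alpha))$ appearing in the construction of $\Psi(\Phi(h))$, and stability (a square is determined by its target cospan) forces the square built for $\Psi(\Phi(h))$ to be $\alpha$ itself, so $\Psi(\Phi(h))=s_v(\alpha)=h$. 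The identity $\Phi(\Psi(w))=w$ is symmetric: the left edge of $\beta$ is a vertical morphism with target in $\aug$, so by uniqueness it is the canonical vertical augmentation morphism used by $\Phi$, the source span of $\beta$ matches the one used to construct $\Phi(\Psi(w))$, and the other stability bijection identifies that square with $\beta$, giving $\Phi(\Psi(w))=t_h(\beta)=w$.

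The step I expect to be the main obstacle is exactly the bookkeeping just described: one must verify that the ``free'' edge of each constructed square---the bottom edge of $\alpha$ and the left edge of $\beta$---is \emph{automatically} the canonical augmentation morphism, rather than merely some morphism with one endpoint in $\aug$. This is precisely what the uniqueness statement in the definition of an augmentation guarantees, and once it is in place the two applications of stability close the argument with no further computation. Notably, no associativity, unitality, or interchange is required here; only the two stability bijections and the two uniqueness statements of the augmentation enter, which keeps the proof short.
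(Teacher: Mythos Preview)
Your proof is correct and follows essentially the same approach as the paper: both construct the forward map by completing the source span $(x\epi a',\,x\mono y)$ to a square via stability and reading off the right vertical edge, and both use the observation that the bottom edge of this square is automatically the canonical augmentation morphism into the bottom-right corner, so that the dual construction from the target cospan recovers the original horizontal morphism. The paper is more terse (``conversely, \ldots\ we can use a dual argument''), whereas you spell out the $\Psi\Phi=\id$ and $\Phi\Psi=\id$ verifications explicitly, but the underlying argument is identical.
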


\begin{proof}
Given a horizontal morphism $x\mono y$, there is a unique vertical morphism $x\epi a$ with $a$ in the augmentation. Then, by stability, there is a unique square 
\[
  \begin{tikzpicture}[scale=0.6]
    \def\l{2cm}
    \begin{scope}
   \draw[fill] (0,0) node (b0){$z.$};
   \draw[fill] (-\l,\l) node (b2){$x$};
   \draw[fill] (-\l,0) node (b3){$a$};
   \draw[fill] (0,\l) node (b1){$y$};

   \draw[epi] (b1)--node[anchor=west](x01){}(b0);
   \draw[mono] (b2)--node[anchor=south](x12){}(b1);
   \draw[epi] (b2)--node[anchor=east](x23){}(b3);
   \draw[mono] (b3)--node[anchor=north](x03){}(b0);

   \draw[twoarrowlonger] (-0.8*\l, 0.8*\l)--(-0.2*\l,0.2*\l);
   \end{scope}
  \end{tikzpicture}
\]
The horizontal target of this square is a vertical morphism $y\epi z$, which is uniquely determined by the original horizontal morphism $x\mono y$. Note that the vertical target of the square is a horizontal morphism $a \mono z$ which is completely determined by $z$ since $a$ is in the augmentation. Conversely, if we start with a vertical morphism $y\epi z$, we can use a dual argument to recover the horizontal morphism $x\mono y$.
\end{proof}

Recall \cref{thm:PathSpaceCriterion}, which tells us that if $\cC$ is a category, then $\Pplus(N\cC)$ and $\Pminus(N\cC)$ are 1-Segal sets. The following proposition identifies the categories associated to the 1-Segal sets $\Pplus(N \Horz{\cD})$ and $\Pminus(N\Verz{\cD})$.

\begin{prop}\label{augstable are easy}
Let $\cD$ be an augmented stable double category. Then there are isomorphisms of categories
\[\tau_1\Pplus(N\Horz{\cD}) \cong \Horo{\cD} \quad\mbox{and}\quad \tau_1\Pminus(N\Verz{\cD}) \cong \Vero{\cD}\]
such that the following diagrams with the adjoint isomorphisms commute:
\begin{center}
\begin{tikzcd}[column sep=-2pt]
\Pplus(N\Horz{\cD}) \arrow{rr}{\cong} \arrow[swap]{dr}{d_0} & &N\Horo{\cD} \arrow{dl}{s_v} &[7pt] \Pminus(N\Verz{\cD}) \arrow{rr}{\cong} \arrow[swap]{dr}{d_{top}} & &N\Vero{\cD} \arrow{dl}{t_h}\\
& N\Horz{\cD} & & & N\Verz{\cD}.
\end{tikzcd}
\end{center}
Moreover, these isomorphisms are natural in $\cD$.
\end{prop}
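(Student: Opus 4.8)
The plan is to prove the two isomorphisms symmetrically, establishing the horizontal case in detail and deducing the vertical case by a dual argument. Since $N\Horz{\cD}$ is the nerve of a category it is $1$-Segal, hence $2$-Segal, so the Path Space Criterion (\cref{thm:PathSpaceCriterion}) shows that $\Pplus(N\Horz{\cD})$ is a $1$-Segal set, and thus the nerve of the category $\tau_1\Pplus(N\Horz{\cD})$ by \cref{prop:1segalnerve} and \cref{fundcat}. Unwinding \cref{defn:pathspace} and the description of $\tau_1$ in \cref{fundcat}, this category has object set $\Pplus(N\Horz{\cD})_0 = \Hor{\cD}$ and morphism set $\Pplus(N\Horz{\cD})_1 = (N\Horz{\cD})_2$, that is, composable pairs $x_0\mono x_1\mono x_2$; after reindexing the face maps, the source and target of such a morphism are $x_0\mono x_1$ and the composite $x_0\mono x_2$, and composition comes from the $1$-Segal structure. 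The goal is therefore to identify this category with $\Horo{\cD}$, whose objects are $\Ver{\cD}$ and whose morphisms are the squares $\Sq{\cD}$ under $\circ_h$.

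I would then define a comparison functor $\Phi\colon \tau_1\Pplus(N\Horz{\cD})\to\Horo{\cD}$ as follows. On objects $\Phi$ is the bijection $\Hor{\cD}\cong\Ver{\cD}$ of \cref{bijectionhorizontalvertical}: a horizontal morphism $x\mono y$ is sent to the horizontal target $y\epi z$ of the unique square whose source span is $(x\epi a,\,x\mono y)$, where $x\epi a$ is the unique vertical morphism into the augmentation. On morphisms, the class of $x_0\mono x_1\mono x_2$ is sent to the unique square $\tau$ with source span $(x_1\epi z_1,\,x_1\mono x_2)$, where $x_1\epi z_1=\Phi(x_0\mono x_1)$; existence and uniqueness of $\tau$ are exactly stability. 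The crucial point is that $\tau$ really is a morphism $\Phi(x_0\mono x_1)\to\Phi(x_0\mono x_2)$ and that $\Phi$ respects composition. To see the former, I would horizontally compose the structure square $\sigma_1$ produced by \cref{bijectionhorizontalvertical} from $x_0\mono x_1$ with $\tau$: the result has source span $(x_0\epi a,\,x_0\mono x_2)$, so by stability it equals the structure square $\sigma_2$ of $x_0\mono x_2$, and comparing horizontal targets yields $t_h(\tau)=\Phi(x_0\mono x_2)$. Functoriality and preservation of identities follow from the same ``equal source span implies equal square'' principle together with \cref{stableassociative}.

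Next I would check that $\Phi$ is an isomorphism of categories and that the triangle commutes. It is bijective on objects by \cref{bijectionhorizontalvertical}, and bijective on morphisms because a square $\tau$ is reconstructed from $s_v(\tau)=(x_1\mono x_2)$ together with $s_h(\tau)=(x_1\epi z_1)$, the latter recovering $x_0\mono x_1$ via the inverse bijection of \cref{bijectionhorizontalvertical}; stability guarantees this assignment is a two-sided inverse. Since $\Pplus(N\Horz{\cD})$, $N\Horo{\cD}$ and $N\Horz{\cD}$ are all nerves, the identity $s_v\circ N\Phi=d_0$ may be verified on the underlying functors: on an object $x\mono y$ one has $s_v(\Phi(x\mono y))=s_v(y\epi z)=y=d_0(x\mono y)$, and the morphism level is analogous. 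For naturality in $\cD$, I would use that an augmented double functor preserves the augmentation, the unique augmented vertical morphisms, and the squares determined by stability (\cref{rmk stable double functor}), so it commutes with both the object bijection and the morphism assignment defining $\Phi$. The vertical isomorphism $\tau_1\Pminus(N\Verz{\cD})\cong\Vero{\cD}$, together with its triangle involving $t_h$ and $d_{top}$, is obtained by the dual argument, using the inverse bijection $\Ver{\cD}\cong\Hor{\cD}$ and the target-cospan half of the stability condition.

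The step I expect to be the main obstacle is the functoriality verification in the second paragraph, namely showing that composition in $\tau_1\Pplus(N\Horz{\cD})$ --- defined abstractly through the $1$-Segal structure and the composition of $2$-simplices described in \cref{fundcat} --- corresponds precisely to horizontal composition $\circ_h$ of squares. This is where stability must do the real work, via the principle that two squares sharing a source span coincide, and it has to be reconciled carefully with the reindexing of face maps in the path space so that sources, targets and composites are matched on the nose rather than merely up to the ambiguity that stability is designed to remove.
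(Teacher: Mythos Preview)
Your proposal is correct and follows essentially the same approach as the paper: both use the bijection of \cref{bijectionhorizontalvertical} on objects and then combine it with the stability bijection $(s_h,s_v)\colon \Sq\cD \xrightarrow{\cong} \Ver\cD\times_{\Ob\cD}\Hor\cD$ to identify morphisms, exactly as in your map $\Phi$. The paper is considerably terser at precisely the point you flag as the main obstacle --- it simply asserts that ``these bijections are compatible with source, target, and composition'' --- whereas you spell out the verification via the ``equal source span implies equal square'' principle; your explicit argument is a faithful expansion of what the paper leaves implicit.
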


\begin{proof}
We prove the statement for the diagram on the left, the other one being similar.  One can check that the set of objects of $\tau_1\Pplus(N\Horz{\cD})$ is precisely the set $\Hor\cD$.  Since the set of objects of $\Horo\cD$ is the set $\Ver\cD$, we have constructed in \cref{bijectionhorizontalvertical} a bijection between the object sets of these two categories.  By inspection, this bijection commutes with the source and target maps to $\Ob{\cD}$.  

Observe that the set of morphisms of $\tau_1\Pplus(N\Horz{\cD})$ is
$$(N\Horz{\cD})_2=\Hor{\cD} \times_{\Ob{\cD}} \Hor{\cD},$$
the set of pairs of composable horizontal morphisms. 
We use the same bijection from \cref{bijectionhorizontalvertical}, together with stability, to obtain a bijection to the sets of morphisms of $\Horo(\cD)$, which is the set $\Sq\cD$:
$$\Hor{\cD} \times_{\Ob{\cD}} \Hor{\cD} \cong \Ver{\cD} \times_{\Ob{\cD}} \Hor{\cD} \xleftarrow{\cong} \Sq\cD.$$
Note that in the first fibered product, the maps to $\Ob{\cD}$ are $(t_h,s_h)$ and in the second one, the maps are $(s_v, s_h)$, as required.
Pictorially, the bijection from \cref{bijectionhorizontalvertical} yields the left-hand vertical arrow in the following picture, and stability yields the displayed square:
\begin{center}
  \begin{tikzpicture}[scale=0.66]
    \def\l{2cm}
    \begin{scope}
   \draw[fill] (0,0) circle (1.5pt) node (b0){};
   \draw[fill] (-\l,\l) circle (1.5pt) node (b2){};
   \draw[fill] (0,\l) circle (1.5pt) node (b1){};
   
   \draw[fill] (\l,\l) circle (1.5pt) node (b4){};
   \draw[fill] (\l,0) circle (1.5pt) node (b5){} node[xshift=0.25cm] (labelb5){.};

   \draw[epi, dashed] (b1)--node[anchor=west](x01){}(b0);
   \draw[mono] (b2)--node[anchor=south](x12){}(b1);
   
   \draw[epi, densely dotted] (b4) -- (b5);
   \draw[mono, densely dotted] (b0) -- (b5);
   \draw[mono] (b1) -- (b4); 

   \draw[twoarrowlonger, densely dotted] (0.2*\l, 0.8*\l)--(0.8*\l,0.2*\l);
   
   \end{scope}
\end{tikzpicture}
\end{center}
These bijections are compatible with source, target, and composition and thus we obtain an isomorphism of categories. Commutativity of the desired diagram can be read off from the pictorial representation.
By inspection, the constructions of the isomorphisms are natural in $\cD$.
\end{proof}

\begin{notn}
We denote by $\pdc$ the category of pointed double categories and pointed double functors. We similarly denote by $\adc$ the category of augmented double categories and augmented double functors. We denote by $\psdc$ and $\asdc$ the full subcategories of $\pdc$ and $\adc$, respectively, consisting of stable objects.
\end{notn}

The aim of this paper is to prove that the categories $\psdc$ and $\asdc$ are equivalent to $\untwoseg_*$ and $\untwoseg$, respectively. 

We conclude this section with an example which is part of a family of augmented stable double categories which will be essential in our definition of the generalized $\sdot$-construction.

\begin{ex}\label{ex: W_2} Consider a double category, denoted by $\cW_2$, with objects $ij$ for $0\leq i \leq j\leq 2$, and generated by the following non-identity horizontal and vertical morphisms and squares:
$$
\begin{tikzpicture}   
   \draw (1,0.3) node(a00){$00$};
\draw (2,0.3) node(a01){$01$};
\draw (2,-0.7) node(a11) {$11$};
\draw (3, -0.7) node(a12){$12$};
\draw (3, 0.3) node(a02){$02$};
\draw (3,-1.7) node (a22){$22.$};

\draw[mono] (a00)--(a01);
\draw[epi] (a01)--(a11);
\draw[mono] (a11)--(a12);
\draw[mono] (a01)--(a02);
\draw[epi] (a12)--(a22);
\draw[epi] (a02)--(a12);

   \draw[twoarrowlonger] (2.2,0.1)--(2.8,-0.5);
\end{tikzpicture}
$$
Since the only non-identity square is uniquely determined by its pair of sources or by its pair of targets, $\cW_2$ is a stable double category.

Furthermore, the set $\aug=\{00, 11, 22\}$ defines an augmentation of $\cW_2$.  For example, consider the object $02$. There is a unique horizontal morphism with target $02$ and source in $\aug$, namely
the composite of the horizontal morphisms from $00$ to $01$ and from $01$ to $02$,
$$\begin{tikzpicture}
\draw (1,0.3) node(a00){$00$};
\draw (3, 0.3) node(a02){$02.$};
\draw[mono] (a00)--(a02);
\end{tikzpicture}$$
Similarly, the vertical morphism from $02$ to $22$ is the unique vertical morphism with source $02$ and target in $\aug$.
\end{ex}

\section{The generalized Waldhausen construction}\label{sec:wald}

We are now ready to define our generalized $\sdot$-construction, which is a functor
\[
\sdot\colon \adc \to \sset,
\]
inspired by Waldhausen's $\sdot$-construction \cite{waldhausen}.  

We begin by describing a cosimplicial object $\cW_{\bullet}$ in augmented stable double categories generalizing \cref{ex: W_2}.  The $\sdot$-construction will be defined by mapping this cosimplicial object into a given augmented stable double category.

\begin{defn}
Given any integer $n\geq 0$, define a double category $\cW_n$ as follows. Its objects are pairs
\[
\left\{ (i,j) \mid 0\leq i\leq j\leq n\right\};
\]
for simplicity of notation, we simply write $ij$ for the pair $(i,j)$.
A horizontal morphism is a triple $(i,j,k)$ with $i\leq j\leq k$, viewed as a map $ij\mono ik$. Similarly, a vertical morphism is also a triple $(i,j,k)$ with $i\leq k\leq j$, viewed as a map $ij\epi kj$. The squares are exactly those of the form
 \begin{center}
  \begin{tikzpicture}[scale=0.6]
    \def\l{2cm}
    \begin{scope}
   \draw[fill] (0,0) node (b0){$kl$};
   \draw[fill] (-\l,\l) node (b2){$ij$};
   \draw[fill] (-\l,0) node (b3){$kj$};
   \draw[fill] (0,\l) node (b1){$il$};

   \draw[epi] (b1)--node[anchor=west](x01){}(b0);
   \draw[mono] (b2)--node[anchor=south](x12){}(b1);
   \draw[epi] (b2)--node[anchor=east](x23){}(b3);
   \draw[mono] (b3)--node[anchor=north](x03){}(b0);

   \draw[twoarrowlonger] (-0.8*\l, 0.8*\l)--(-0.2*\l,0.2*\l);
   \end{scope}
  \end{tikzpicture}
 \end{center}
with $i\leq k\leq j\leq l$.
\end{defn}

The double category $\cW_n$ is augmented by the set $\aug=\{ii \mid 0\leq i\leq n\}$.   

\begin{ex}
 The double categories $\cW_n$ can be pictured for $0 \leq n\leq 4$ as follows:
\begin{center}
 \begin{tikzpicture}[scale=0.9]
  \draw (0,0) node(d0){$\cW_0\colon$};
\draw (1,0) node{$00$};

 \draw (0,-2) node(d1){$\cW_1\colon$};
 \begin{scope}[yshift=-2cm]
\begin{scope}[yshift=0.2cm]
\draw (1,0.3) node(a00){$00$};
\draw (2,0.3) node(a01){$01$};
\draw (2,-0.7) node(a11) {$11$};
\draw[mono] (a00)--(a01);
\draw[epi] (a01)--(a11);
\end{scope}
\end{scope}

 \draw (0,-5) node(d1){$\cW_2\colon $};
 \begin{scope}[yshift=-5cm]
\begin{scope}[yshift=0.7cm]
   \draw (1,0.3) node(a00){$00$};
\draw (2,0.3) node(a01){$01$};
\draw (2,-0.7) node(a11) {$11$};
\draw (3, -0.7) node(a12){$12$};
\draw (3, 0.3) node(a02){$02$};
\draw (3,-1.7) node (a22){$22$};
\draw[mono] (a00)--(a01);
\draw[epi] (a01)--(a11);
\draw[mono] (a11)--(a12);
\draw[mono] (a01)--(a02);
\draw[epi] (a12)--(a22);
\draw[epi] (a02)--(a12);

   \draw[twoarrowlonger] (2.2,0.1)--(2.8,-0.5);
\end{scope}
\end{scope}

 \draw (7,0) node(d1){$\cW_3\colon $};
 \begin{scope}[yshift=0cm, xshift=7cm]

\begin{scope}[yshift=1.1cm]
   \draw (1,0.3) node(a00){$00$};
\draw (2,0.3) node(a01){$01$};
\draw (2,-0.7) node(a11) {$11$};
\draw (3, -0.7) node(a12){$12$};
\draw (3, 0.3) node(a02){$02$};
\draw (3,-1.7) node (a22){$22$};
\draw (4, 0.3) node (a03){$03$};
\draw (4, -0.7) node (a13){$13$};
\draw (4, -1.7) node (a23){$23$};
\draw (4, -2.7) node (a33){$33$};

\draw[mono] (a00)--(a01);
\draw[mono] (a11)--(a12);
\draw[mono] (a01)--(a02);
\draw[mono] (a02)--(a03);
\draw[mono] (a12)--(a13);
\draw[mono] (a22)--(a23);

\draw[epi] (a01)--(a11);
\draw[epi] (a12)--(a22);
\draw[epi] (a02)--(a12);
\draw[epi] (a03)--(a13);
\draw[epi] (a13)--(a23);
\draw[epi] (a23)--(a33);

   \draw[twoarrowlonger] (2.2,0.1)--(2.8,-0.5);
   \draw[twoarrowlonger] (3.2,0.1)--(3.8,-0.5);
   \draw[twoarrowlonger] (3.2,-0.9)--(3.8,-1.5);
\end{scope}

\end{scope}

 \draw (7,-5) node(d1){$\cW_4\colon $};
 \begin{scope}[yshift=-5cm, xshift=7cm]

\begin{scope}[yshift=1.6cm]
   \draw (1,0.3) node(a00){$00$};
\draw (2,0.3) node(a01){$01$};
\draw (2,-0.7) node(a11) {$11$};
\draw (3, -0.7) node(a12){$12$};
\draw (3, 0.3) node(a02){$02$};
\draw (3,-1.7) node (a22){$22$};
\draw (4, 0.3) node (a03){$03$};
\draw (4, -0.7) node (a13){$13$};
\draw (4, -1.7) node (a23){$23$};
\draw (4, -2.7) node (a33){$33$};
\draw (5, 0.3) node (a04){$04$};
\draw (5, -0.7) node (a14){$14$};
\draw (5, -1.7) node (a24){$24$};
\draw (5, -2.7) node (a34){$34$};
\draw (5, -3.7) node (a44){$44.$};

\draw[mono] (a00)--(a01);
\draw[mono] (a11)--(a12);
\draw[mono] (a01)--(a02);
\draw[mono] (a02)--(a03);
\draw[mono] (a12)--(a13);
\draw[mono] (a22)--(a23);
\draw[mono] (a03)--(a04);
\draw[mono] (a13)--(a14);
\draw[mono] (a23)--(a24);
\draw[mono] (a33)--(a34);

\draw[epi] (a01)--(a11);
\draw[epi] (a12)--(a22);
\draw[epi] (a02)--(a12);
\draw[epi] (a03)--(a13);
\draw[epi] (a13)--(a23);
\draw[epi] (a23)--(a33);
\draw[epi] (a04)--(a14);
\draw[epi] (a14)--(a24);
\draw[epi] (a24)--(a34);
\draw[epi] (a34)--(a44);

   \draw[twoarrowlonger] (2.2,0.1)--(2.8,-0.5);
   \draw[twoarrowlonger] (3.2,0.1)--(3.8,-0.5);
   \draw[twoarrowlonger] (3.2,-0.9)--(3.8,-1.5);
   \draw[twoarrowlonger] (4.2,0.1)--(4.8,-0.5);
   \draw[twoarrowlonger] (4.2,-1.9)--(4.8,-2.5);
   \draw[twoarrowlonger] (4.2,-0.9)--(4.8,-1.5);
\end{scope}
\end{scope}
 \end{tikzpicture}
\end{center}

In each case, the pictured arrows and squares generate the double category $\cW_n$, and the augmentation consists of all elements on the diagonal.
\end{ex}

\begin{rmk}
We can alternatively construct $\cW_n$ as a certain sub-double category of a double category of functors as follows.  First, recall the double category of commutative squares in an arbitrary category $\cC$.  The vertical and horizontal categories are both given by $\cC$, and the set of squares is exactly the set of commutative squares in $\cC$.  Composition of squares and units come from composition of morphisms in $\cC$ and from identities in $\cC$, respectively. 

For each $n \geq 0$, consider the double category arising in this way from the category $\Fun([1],[n])$.  Since the categories $\Fun([1],[n])$ form a cosimplicial category as $n$ varies, their respective double categories also assemble to form a cosimplicial double category.

For each $n \geq 0$, let $\cW_n$ be the sub-double category of the double category of commutative squares in $\Fun([1], [n])$ consisting of:
\begin{itemize}
\item all the objects of $\Fun([1],[n])$; 
\item natural transformations which are the identity on the object $1$ of $[1]$ as vertical morphisms;  
\item natural transformations which are the identity on the object $0$ of $[1]$ as horizontal morphisms; and
\item all commutative squares between these morphisms.
\end{itemize}  

One can check that these properties do indeed define a double category.  Moreover, since the
cosimplicial structure comes from the post-composition of morphisms $[1]\to [n]$ with arbitrary morphisms in $\Delta$, it preserves the vertical and horizontal subcategories described above, thus making $\cW_{\bullet}$ into a cosimplicial double category.  Each double category $\cW_n$ has an augmentation given by the constant functors $[1] \to [n]$; note that the cosimplicial structure maps preserve the augmentation.
\end{rmk}

\begin{lem}
The collection $\cW_\bullet$ is a cosimplicial object in augmented stable double categories.
\end{lem}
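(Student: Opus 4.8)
The plan is to realize $\cW_\bullet$ as a functor $\Delta \to \asdc$. Each $\cW_n$ is already an augmented stable double category: the augmentation is $\aug = \{ii \mid 0 \le i \le n\}$, and stability holds by exactly the argument given for $\cW_2$ in \cref{ex: W_2}, since a square with corners $ij, il, kj, kl$ (with $i \le k \le j \le l$) is recovered from its span of sources $(ij \epi kj,\, ij \mono il)$ by reading off $k$ and $l$, and dually from its cospan of targets $(il \epi kl,\, kj \mono kl)$. So what remains is to produce the cosimplicial structure maps and to check that they are augmented double functors and are functorial in $\theta$.

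For the structure maps I would use the alternative description of $\cW_n$ recalled above, as a sub-double category of the double category of commutative squares in $\Fun([1],[n])$. A morphism $\theta \colon [m] \to [n]$ in $\Delta$ induces, by post-composition, a functor $\theta_* \colon \Fun([1],[m]) \to \Fun([1],[n])$ sending $f \mapsto \theta \circ f$; since functors preserve commutative squares and respect the composition and identities used to define composition of squares, $\theta_*$ induces a double functor between the associated double categories of commutative squares. The main point to verify is that this double functor restricts to a functor $\cW_m \to \cW_n$. This reduces to the observation that post-composition preserves the defining conditions on morphisms: if a natural transformation $\alpha$ is the identity on the object $1 \in [1]$ (a horizontal morphism), then the component of $\theta_*\alpha$ at $1$ is $\theta(\id) = \id$, so horizontal morphisms are sent to horizontal morphisms, and symmetrically morphisms which are the identity on $0 \in [1]$ are sent to vertical morphisms. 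Since $\cW_n$ contains all commutative squares between such morphisms, and $\theta_*$ sends a square of $\cW_m$ to a commutative square whose edges are again horizontal and vertical, $\theta_*$ indeed lands in $\cW_n$.

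It then remains to check that each $\theta_*$ is augmented and that the assignment $\theta \mapsto \theta_*$ is functorial. Augmentation is preserved because post-composing the constant functor at $i \in [m]$ with $\theta$ yields the constant functor at $\theta(i) \in [n]$. Functoriality, namely $(\theta \circ \psi)_* = \theta_* \circ \psi_*$ and $(\id_{[n]})_* = \id_{\cW_n}$, is immediate from associativity and unitality of composition of functors $[1] \to [m] \to [n]$. I do not expect a genuine obstacle in this lemma; the one step deserving a moment's care is confirming that the induced double functor restricts to the sub-double category, which is precisely the elementary computation on components of natural transformations indicated above. (Note that by \cref{rmk stable double functor} each $\theta_*$, as a functor into the stable double category $\cW_n$, is in any case determined by its action on horizontal and vertical morphisms, so the explicit post-composition description suffices.)
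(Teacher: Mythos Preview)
Your proof is correct and follows essentially the same strategy as the paper: the cosimplicial structure via post-composition and the preservation of the augmentation are handled in the paper's preceding remark, while the formal proof of the lemma spells out only the stability verification (that a span or cospan uniquely determines a square in $\cW_n$), which you dispatch by the same reading-off-the-indices argument. One small slip: in the paper's conventions, natural transformations that are the identity on $0\in[1]$ are the \emph{horizontal} morphisms and those fixing $1$ are the \emph{vertical} ones, so your labels are reversed---though this is purely cosmetic and does not affect the argument.
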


\begin{proof}
We have shown that each $\cW_n$ is an augmented double category; it remains to show that it is stable. 
If we have a span 
\begin{center}
  \begin{tikzpicture}[scale=0.6]
    \def\l{2cm}
    \begin{scope}
   \draw[fill] (-\l,\l) node (b2){$ij$};
   \draw[fill] (-\l,0) node (b3){$kj$};
   \draw[fill] (0,\l) node (b1){$il$};

   \draw[mono] (b2)--node[anchor=south](x12){}(b1);
   \draw[epi] (b2)--node[anchor=east](x23){}(b3);

   \end{scope}
  \end{tikzpicture}
 \end{center}
and want to produce a square
   \begin{center}
  \begin{tikzpicture}[scale=0.6]
    \def\l{2cm}
    \begin{scope}
  \draw[fill] (0,0) node (b0){$xy,$};
   \draw[fill] (-\l,\l) node (b2){$ij$};
   \draw[fill] (-\l,0) node (b3){$kj$};
   \draw[fill] (0,\l) node (b1){$il$};

   \draw[epi, dashed] (b1)--(b0);
   \draw[mono] (b2)--(b1);
   \draw[epi] (b2)--(b3);
   \draw[mono, dashed] (b3)--(b0);

   \draw[twoarrowlonger, dashed] (-0.8*\l, 0.8*\l)--(-0.2*\l,0.2*\l);
   
   \end{scope}
  \end{tikzpicture}
 \end{center} 
it is necessary that $y=l$ for the right-hand side map to be a vertical morphism, and that $x=k$ for the lower map to be a horizontal morphism. Now there exists a unique square of the necessary form, namely
 \begin{center}
  \begin{tikzpicture}[scale=0.6]
    \def\l{2cm}
    \begin{scope}
   \draw[fill] (0,0) node (b0){$kl,$};
   \draw[fill] (-\l,\l) node (b2){$ij$};
   \draw[fill] (-\l,0) node (b3){$kj$};
   \draw[fill] (0,\l) node (b1){$il$};

   \draw[epi] (b1)--node[anchor=west](x01){}(b0);
   \draw[mono] (b2)--node[anchor=south](x12){}(b1);
   \draw[epi] (b2)--node[anchor=east](x23){}(b3);
   \draw[mono] (b3)--node[anchor=north](x03){}(b0);

   \draw[twoarrowlonger] (-0.8*\l, 0.8*\l)--(-0.2*\l,0.2*\l);
   \end{scope}
  \end{tikzpicture}
 \end{center}
since by properties of the original span we know that $i\leq k\leq j\leq l$. The argument for the cospan case is analogous.
\end{proof}

There are two other cosimplicial objects in double categories which will use later on; by construction one governs the horizontal and the other the vertical category of a double category. We will explain the horizontal case, the vertical one is similar.
\begin{defn}
For every $n \geq 0$, let $\cH_n$ be the (stable) double category whose object set is $\{0,1,\ldots, n\}$, whose horizontal category is $[n]$, whose vertical category is discrete, i.e., has no non-identity morphisms, and whose squares are only the identity squares. Similarly, let $\cV_n$ be the (stable) double category given by switching the horizontal and vertical directions in $\cH_n$.
\end{defn}

In what follows, we use the following property of $\cH_n$ and $\cV_n$, whose proof is left to the reader.

\begin{lem}
\label{lem:lemforPplusminusSdot} 
Ranging over all objects $[n]$ of $\Delta$, we obtain a cosimplicial stable double category $\cH_{\bullet}$.   Given a double category $\cD$, any double functor $\cH_n\to \cD$ is determined uniquely by a functor $[n]\to \Horz{\cD}$, and this identification is compatible with the cosimplicial structure. We obtain bijections
 \[
\Hom_{\dc}(\cH_n, \cD) \cong \Hom_{\cat}([n],\Horz{\cD})
 \]
which are natural in $n$, and thus assemble to an isomorphism of simplicial sets
\[
\Hom_{\dc}(\cH_{\bullet}, \cD) \cong N\Horz{\cD}.
\]
The analogous statements hold for $\cV_\bullet$ and $\Verz{\cD}$.
\end{lem}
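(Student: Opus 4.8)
The plan is to unwind the definition of a double functor and exploit the fact that both the vertical category and the collection of squares of $\cH_n$ are as degenerate as possible, so that every piece of the data of a double functor $F\colon\cH_n\to\cD$ beyond its horizontal part is forced. First I would record the cosimplicial structure. A morphism $\theta\colon[m]\to[n]$ of $\Delta$ has underlying object map $\{0,\dots,m\}\to\{0,\dots,n\}$ and underlying functor $[m]\to[n]$, and I would define the induced double functor $\cH_m\to\cH_n$ by applying $\theta$ on objects, applying the functor $[m]\to[n]$ on horizontal morphisms, sending each (identity) vertical morphism to the identity on its image, and sending each (identity) square to the identity square on its image. Functoriality in $\theta$ is immediate, so $\cH_\bullet$ is a cosimplicial double category; stability of each $\cH_n$ is already asserted in its definition, and I would justify it by noting that under the identifications $\Sq{\cH_n}\cong\Hor{\cH_n}$ and $\Ver{\cH_n}\times_{\Ob{\cH_n}}\Hor{\cH_n}\cong\Hor{\cH_n}$ both stability maps become the identity on $\Mor[n]$.

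Next I would establish the bijection for fixed $n$. Restricting $F\colon\cH_n\to\cD$ to its horizontal data yields a functor $\Horz{\cH_n}=[n]\to\Horz{\cD}$; this is the forward map. For the inverse, given a functor $\varphi\colon[n]\to\Horz{\cD}$, I would reconstruct $F$ by taking its object map and horizontal part to be those of $\varphi$, its vertical part to be $\id_i\mapsto\id_{\varphi(i)}$, and its square part to be $\id^v_f\mapsto\id^v_{\varphi(f)}$. The point making this the \emph{unique} possible extension is that the vertical morphisms and the squares of $\cH_n$ are all identities — the only squares are the vertical identity squares $\id^v_f$ indexed by horizontal morphisms $f$, which on identity morphisms coincide with the horizontal identity squares on the identity vertical morphisms — so preservation of identity vertical morphisms and of identity squares, both part of the double functor axioms, forces the vertical and square assignments once the object and horizontal assignments are fixed. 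I would then confirm the reconstruction is a genuine double functor: compatibility with horizontal source, target, and composition is exactly functoriality of $\varphi$; compatibility with the vertical structure is automatic since everything involved is an identity; and compatibility of the square assignment with the source, target, unit, and composition maps reduces to the corresponding identities for $\varphi$. The two constructions are visibly mutually inverse, giving $\Hom_{\dc}(\cH_n,\cD)\cong\Hom_{\cat}([n],\Horz{\cD})$.

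Finally I would check naturality and assemble the result. For $\theta\colon[m]\to[n]$, precomposition with the double functor $\cH_m\to\cH_n$ on the left corresponds under the bijection to precomposition with $[m]\to[n]$ on the right, because that double functor restricts to $\theta$ on horizontal categories; hence the bijections are natural in $n$ and assemble into an isomorphism of simplicial sets. Since $\Hom_{\cat}([\bullet],\Horz{\cD})$ is by definition the nerve $N\Horz{\cD}$, this gives $\Hom_{\dc}(\cH_\bullet,\cD)\cong N\Horz{\cD}$, and the statement for $\cV_\bullet$ and $\Verz{\cD}$ follows by the evident horizontal–vertical dual of the entire argument. I expect the only genuinely delicate point to be the bookkeeping in the second paragraph: pinning down exactly which squares $\cH_n$ contains and verifying that the forced vertical and square assignments are consistent with every source, target, unit, and composition map in the definition of a double functor. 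Everything else is formal.
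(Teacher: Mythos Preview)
Your proposal is correct and is precisely the routine unwinding of definitions that the paper has in mind; indeed, the paper explicitly leaves this proof to the reader, and your argument—restricting to the horizontal data for the forward map and extending by identities for the inverse, then checking naturality—is the canonical way to fill the gap.
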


Recall from \cref{defn:pathspace} that the path object functor $\Pplus$ is defined using the join construction $[n] \mapsto [0]*[n]$.  Although $[0]*[n] \cong [n+1]$, it is preferable here to think of adjoining an extra object $0'$ and using the ordering 
\[
 [0]*[n]=\{0'\leq 0 \leq 1 \leq \ldots \leq n\}.
\] 
To clarify notation, we write $W([0]*[n])$ rather than $W_{n+1}$ for the corresponding augmented stable double category. 

Using this notation, we define a double functor 
\[ j_n\colon \cH_n\to \cW([0]*[n]) \]
by $j_n(k \leq l)=(0'k\mono 0'l)$. For example, for $n=2$, we indicate by boldface the image of $j_2$ in the diagram
\begin{center}
 \begin{tikzpicture}[scale=1.2]
\begin{scope}
   \draw (1,0.3) node(a00){$0'0'$};
\draw (2,0.3) node(a01){$\mathbf{0'0}$};
\draw (2,-0.7) node(a11) {$00$};
\draw (3, -0.7) node(a12){$01$};
\draw (3, 0.3) node(a02){$\mathbf{0'1}$};
\draw (3,-1.7) node (a22){$11$};
\draw (4, 0.3) node (a03){$\mathbf{0'2}$};
\draw (4, -0.7) node (a13){$02$};
\draw (4, -1.7) node (a23){$12$};
\draw (4, -2.7) node (a33){$22.$};

\draw[mono] (a00)--(a01);
\draw[mono] (a11)--(a12);
\draw[mono, thick] (a01)--(a02);
\draw[mono, thick] (a02)--(a03);
\draw[mono] (a12)--(a13);
\draw[mono] (a22)--(a23);

\draw[epi] (a01)--(a11);
\draw[epi] (a12)--(a22);
\draw[epi] (a02)--(a12);
\draw[epi] (a03)--(a13);
\draw[epi] (a13)--(a23);
\draw[epi] (a23)--(a33);

   \draw[twoarrowlonger] (2.2,0.1)--(2.8,-0.5);
   \draw[twoarrowlonger] (3.2,0.1)--(3.8,-0.5);
   \draw[twoarrowlonger] (3.2,-0.9)--(3.8,-1.5);
\end{scope}
 \end{tikzpicture}
\end{center} 
Observe that the double functors $j_n$ are natural in $n$, and  since $0'$ remains unchanged under the cosimplicial structure maps of $\cW([0]* [n])$, the functors $j_n$ indeed assemble to give a cosimplicial double functor $j_\bullet$.

We are now ready to use $\cW_\bullet$ to define the generalized $\sdot$-construction.

\begin{defn}\label{defn:sdot}
The \emph{Waldhausen $\sdot$-construction} is the functor
\[ \sdot \colon \adc \rightarrow \sset \] 
which 
takes an augmented double category $\cD$ to the simplicial set $\sdot(\cD)$ given by 
 \[
  \sdotn{n}(\cD):=\Hom_{\adc}(\cW_n, \cD).
 \]
\end{defn}

The first half of our main theorem is given by the following result.

\begin{thm}\label{prop: first half}
The Waldhausen $\sdot$-construction restricts to functors
$$\sdot \colon \asdc \rightarrow \untwoseg $$
and
$$\sdot \colon \psdc \rightarrow \untwoseg_* .$$
\end{thm}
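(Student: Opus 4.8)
The plan is to verify the three properties required of $\sdot(\cD)$ for $\cD$ augmented stable — that it is a $2$-Segal set, that it is unital, and (in the pointed case) that it is reduced — since $\sdot$ is already known to be a functor $\adc \to \sset$. The organizing tool will be the Path Space Criterion (\cref{thm:PathSpaceCriterion}): $\sdot(\cD)$ is $2$-Segal as soon as its two path spaces $\Pplus\sdot(\cD)$ and $\Pminus\sdot(\cD)$ are $1$-Segal sets.

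The heart of the argument is an identification of these path spaces with nerves. Unwinding \cref{defn:pathspace} gives
\[(\Pplus\sdot(\cD))_n = \sdotn{n+1}(\cD) = \Hom_{\adc}(\cW([0]*[n]), \cD),\]
so that $\Pplus\sdot(\cD) = \Hom_{\adc}(\cW([0]*[\bullet]), \cD)$. I would show that restriction along the cosimplicial double functor $j_\bullet\colon \cH_\bullet \to \cW([0]*[\bullet])$ induces an isomorphism of simplicial sets
\[j_\bullet^*\colon \Pplus\sdot(\cD) \xrightarrow{\ \cong\ } \Hom_{\dc}(\cH_\bullet, \cD) \cong N\Horz{\cD},\]
the last isomorphism being \cref{lem:lemforPplusminusSdot}; the dual construction gives $\Pminus\sdot(\cD) \cong N\Verz{\cD}$. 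Since nerves of categories are $1$-Segal (\cref{prop:1segalnerve}), the Path Space Criterion then forces $\sdot(\cD)$ to be $2$-Segal. To see that each $j_n^*$ is a bijection, observe that $j_n$ singles out the chain of horizontal morphisms $0'0 \mono 0'1 \mono \cdots \mono 0'n$ along the top row of $\cW([0]*[n])$, so $j_n^*$ sends a functor to this chain. I would reconstruct the entire functor from the chain by induction on the rows of $\cW([0]*[n])$: the diagonal objects lie in $\aug$ and, with the morphisms joining them to the chain, are forced by the augmentation condition, while every remaining object and square is the unique filler of a source span already determined in the previous row, via the stability bijection $(s_h,s_v)$. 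Uniqueness is immediate from stability; the delicate point is \emph{existence}, namely that the reconstructed assignment genuinely respects horizontal and vertical composition of squares and the interchange law, which is exactly the compatibility supplied by the stability arguments of \cref{stableassociative}.

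For unitality I would apply the reduction of \cref{lem:easyunital} and check that its two squares are pullbacks. Under the identifications $\sdotn{0}(\cD) \cong \aug$, $\sdotn{1}(\cD) \cong \Ob{\cD}$, and $\sdotn{2}(\cD) \cong \Hor{\cD}$ extracted above, the relevant face and degeneracy maps become the maps $\delta^h,\delta^v,f,g$ and the horizontal source map of \cref{augpb}, together with $d \mapsto \id^h_d$. The second square then literally records the bijection $\Hor{\cD}\times_{\Ob{\cD}}\aug \cong \Ob{\cD}$ of \cref{augpb}. For the first square the fibre product is the set of horizontal morphisms whose "cofiber" — the lower-right corner of the stability-filled square — lies in $\aug$; transporting across the bijection $\Hor{\cD}\cong\Ver{\cD}$ of \cref{bijectionhorizontalvertical}, these are exactly the vertical morphisms with target in $\aug$, hence a copy of $\Ob{\cD}$, and one checks that $d \mapsto \id^h_d$ realizes this bijection. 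This yields unitality, so $\sdot(\cD) \in \untwoseg$.

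In the pointed case $\aug = \{*\}$, the computation $\sdotn{0}(\cD) = \Hom_{\adc}(\cW_0, \cD) \cong \aug = \{*\}$ shows that $\sdot(\cD)$ is reduced, so $\sdot$ restricts to $\psdc \to \untwoseg_*$. The main obstacle throughout is the existence half of the path-space isomorphism: stability hands us uniqueness of all fillers for free, but verifying that the row-by-row reconstruction assembles into an honest double functor — compatible with every composition and with the interchange law — is the one genuinely laborious step, and is precisely where stability, in the form of \cref{stableassociative}, does the real work.
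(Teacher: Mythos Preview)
Your strategy matches the paper's: establish $\Pplus\sdot(\cD)\cong N\Horz\cD$ by restricting along $j_\bullet$ and reconstructing any augmented double functor $\cW([0]*[n])\to\cD$ row by row from augmentation and stability, invoke the Path Space Criterion, verify unitality via \cref{lem:easyunital}, and observe reducedness when $\aug=\{*\}$.

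One correction: you flag the ``existence'' half of the reconstruction as the main obstacle and appeal to \cref{stableassociative}, but that proposition assembles a double \emph{category} from partial data; here $\cD$ is already a double category and there is nothing to check about interchange or composition of the reconstructed squares inside $\cD$. What makes the reconstruction a double functor is that $\cW([0]*[n])$ is freely generated by its elementary squares, so any assignment on those generators with matching boundaries---which your row-by-row induction already guarantees---extends uniquely. The paper dispatches this in one sentence. Your unitality argument, translating the two squares of \cref{lem:easyunital} through the identifications $\sdotn{0}(\cD)\cong\aug$, $\sdotn{1}(\cD)\cong\Ob\cD$, $\sdotn{2}(\cD)\cong\Hor\cD$ and reducing to \cref{augpb} and \cref{bijectionhorizontalvertical}, is a valid and somewhat more structural alternative to the paper's direct computation, which instead takes an arbitrary element of the pullback, observes that the leftmost square of the corresponding $\cW_2$-diagram has horizontal source $\id^v_a$, and uses stability to force it to be a vertical identity.
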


We prove this theorem via two propositions. We first need to show that, for any augmented stable double category $\cD$,  the simplicial set $\sdot(\cD)$ is $2$-Segal.  We do so by using the Path Space Criterion (\cref{thm:PathSpaceCriterion}), and the fact that a $1$-Segal set is just the nerve of a category (\cref{prop:1segalnerve}). 

\begin{prop}\label{prop:PplusminusSdot}
 Let $\cD$ be an augmented stable double category. Then there are isomorphisms of simplicial sets
 \[
  \Pplus\sdot(\cD)\to N\Horz{\cD} \mbox{ and } \Pminus\sdot(\cD)\to N\Verz{\cD}.
 \]
 In particular, $\Pplus \sdot(\cD)$ and $\Pminus \sdot(\cD)$ are 1-Segal sets, and hence $\sdot(\cD)$ is a 2-Segal set.
\end{prop}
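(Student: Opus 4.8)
The plan is to realize the desired isomorphism as restriction along the cosimplicial double functor $j_\bullet$, and then to verify that this restriction is a bijection in each simplicial degree. By \cref{defn:sdot} and \cref{defn:pathspace}, the $n$-simplices of $\Pplus\sdot(\cD)$ are $\Hom_{\adc}(\cW([0]*[n]),\cD)$. Precomposing with $j_n\colon \cH_n \to \cW([0]*[n])$ and invoking the identification $\Hom_{\dc}(\cH_n,\cD)\cong N\Horz{\cD}_n$ of \cref{lem:lemforPplusminusSdot} produces a map $\Pplus\sdot(\cD)_n \to N\Horz{\cD}_n$; since $j_\bullet$ is a cosimplicial double functor and the identification is natural in $n$, these assemble into a simplicial map $\Phi\colon \Pplus\sdot(\cD)\to N\Horz{\cD}$. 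Explicitly, $\Phi$ sends an augmented double functor $F$ to its ``top row'' $F(0'0)\mono F(0'1)\mono\cdots\mono F(0'n)$, and the whole content is to show $\Phi$ is a levelwise bijection.

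For injectivity I would show that an augmented double functor $F\colon\cW([0]*[n])\to\cD$ is completely determined by its top row. The augmentation morphisms force $F(0'0')$ and each $F(kk)$: these are the unique objects of $\aug$ equipped, respectively, with a horizontal morphism into $F(0'0)$ and a vertical morphism out of $F(0'k)$, which exist and are unique precisely because $\cD$ is augmented. Every remaining object $F(kl)$, and in fact the image of every square of $\cW([0]*[n])$, is then pinned down by stability of $\cD$: the source span of the staircase square with corners $0'k,0'l,kk,kl$ is assembled entirely from data already determined, so the unique $\cD$-square with that span must be its image, which in turn fixes $F(kl)$ together with the surrounding morphisms. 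Iterating this across the staircase recovers $F$ on all objects and on all horizontal morphisms, vertical morphisms, and squares.

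For surjectivity I would run the same recipe in reverse: starting from a string $d_0\mono\cdots\mono d_n$, define $F$ on objects using the augmentation of $\cD$ and on squares using the stability bijections, and then check that $F$ is a bona fide augmented double functor. This well-definedness is the main obstacle, since one must verify that the assignment respects horizontal and vertical composition, identities, and the interchange law. Here \cref{stableassociative} carries the load: because all of the reconstructed structure is built from spans and cospans in a stable setting, each compatibility reduces to the fact that two composites with equal source spans must coincide. Compatibility of $\Phi$ with the simplicial operators and naturality in $\cD$ hold by the construction of $j_\bullet$. The statement for $\Pminus$ and $N\Verz{\cD}$ is entirely dual, using $\cV_\bullet$ and the analogous cosimplicial functor into $\cW([\bullet]*[0])$.

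Finally, the concluding assertion is immediate: $N\Horz{\cD}$ and $N\Verz{\cD}$ are nerves of categories, hence $1$-Segal by \cref{prop:1segalnerve}, so the isomorphisms just produced exhibit $\Pplus\sdot(\cD)$ and $\Pminus\sdot(\cD)$ as $1$-Segal sets. The Path Space Criterion \cref{thm:PathSpaceCriterion} then yields that $\sdot(\cD)$ is $2$-Segal.
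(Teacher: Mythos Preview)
Your proposal is correct and follows essentially the same approach as the paper: construct the map by precomposition with $j_\bullet$, show it is a levelwise bijection by using the augmentation to pin down the diagonal and then stability to fill in the rest, and conclude via the Path Space Criterion. The only organizational difference is that the paper runs a row-by-row double induction on the small generating squares of $\cW([0]*[n])$ rather than your large staircase squares; since those small squares freely generate the double category, defining $F$ on them immediately yields a double functor, so the paper never needs to invoke \cref{stableassociative} for the well-definedness step you flag.
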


\begin{proof}
We establish the first isomorphism; the second one is analogous.

If $\cD$ is a augmented stable double category, then precomposition with the cosimplicial double functor $j_\bullet$ gives a map of simplicial sets
\[
 \Pplus\sdot(\cD)=\Hom_{\adc}(\cW([0]* \bullet), \cD)\to \Hom_{\dc}(\cH_{\bullet},\cD).
\]
By \cref{{lem:lemforPplusminusSdot}}, it is enough to prove that this map is an isomorphism of simplicial sets, which can be accomplished by proving that an augmented double functor $F \colon \cW([0]*[n]) \rightarrow \cD$ is uniquely determined by its restriction to the image of $j_n$.  To prove this statement for a fixed $n$, we use induction on $k$ to show that the row labeled by $k \in [0]*[n]$ is determined by the image of $j_n$.  For notational purposes, if $k=0$, then $k-1$ is taken to be $0'$. 

For the base case $k=0'$, observe that if we start with the image of $j_n$ only, then to obtain the rest of the $0'$ row we need only adjoin a horizontal map to be the image of the map $0'0'\mono 0'0$.   Since $\cD$ is an augmented double category and we want $F$ to be an augmented functor, the image of $0'0'\mono 0'0$ must be the unique horizontal morphism $a\mono F(0'0)$ with $a$ in the augmentation set.

Now, let $0\leq k < n$, and assume that $F$ is uniquely defined up to the row labeled by $k-1$. To complete the row labeled by $k$, we first send the map $(k-1)k\epi kk$ to the unique vertical morphism $F((k-1)k)\epi a'$ in $\cD$ with $a' \in \aug$ given by the augmentation.

Given $k+1\leq \ell \leq n$, to define the images of the objects $k\ell$, the morphisms $(k-1)\ell\epi k\ell$ and $k(\ell -1) \mono k\ell$, and the square with these morphisms as targets, we proceed by induction on $\ell$.  By stability in $\cD$, there is a unique square
\begin{center}
  \begin{tikzpicture}[scale=1.2]
    \def\l{2cm}
    \def\w{3cm}
    \begin{scope}
   \draw[fill] (0,0) node[minimum width=1cm, minimum height=0.5cm] (b0){$x,$};
   \draw[fill] (-\w,\l) node[minimum width=1cm, minimum height=0.5cm]  (b2){$F((k-1)(\ell -1))$};
   \draw[fill] (-\w,0) node[minimum width=1cm, minimum height=0.5cm]  (b3){$F(k(\ell -1))$};
   \draw[fill] (0,\l) node[minimum width=1cm, minimum height=0.5cm]  (b1){$F((k-1)\ell)$};

   \draw[epi] (b1)--node[anchor=west](x01){}(b0);
   \draw[mono] (b2)--node[anchor=south](x12){}(b1);
   \draw[epi] (b2)--node[anchor=east](x23){}(b3);
   \draw[mono] (b3)--node[anchor=north](x03){}(b0);

   \draw[twoarrowlonger] (-0.7*\w, 0.7*\l)--(-0.3*\w,0.3*\l);
   \end{scope}
  \end{tikzpicture}
 \end{center}
 which we use to extend $F$. 
 
Lastly, if $k=n$, the image of $(n-1)n \epi nn$ is uniquely specified by the augmentation.
 
We have constructed $F$ on the generating horizontal and vertical morphisms, and on the generating squares of $W([0]* [n])$, which is enough to define a double functor.  Thus, we have established the desired isomorphism. 

The final statement of the proposition follows from the fact that nerves of categories are 1-Segal sets; the fact that $\sdot(\cD)$ is 2-Segal then follows from the Path Space Criterion.
\end{proof}

Now that we have proved that the image of any augmented stable double category under the $\sdot$-construction is a $2$-Segal set, it remains to prove that it is also unital.

\begin{prop}\label{lem:UnitalitySdot} 
Let $\cD$ be an augmented stable double category. Then $\sdot(\cD)$ is unital.
\end{prop}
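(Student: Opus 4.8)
The plan is to invoke \cref{lem:easyunital}: since we have just shown in \cref{prop:PplusminusSdot} that $\sdot(\cD)$ is $2$-Segal, it suffices to check that the two diagrams
\[
\begin{tikzcd}
X_{1} \arrow[r, "d_0"] \arrow[d, "s_1"'] & X_0 \arrow[d, "s_0"]\\
X_2 \arrow[r, "d_0"']  & X_1
\end{tikzcd}
\quad\text{and}\quad
\begin{tikzcd}
X_{1} \arrow[r, "d_1"] \arrow[d, "s_0"'] & X_0 \arrow[d, "s_0"]\\
X_2 \arrow[r, "d_2"']  & X_1
\end{tikzcd}
\]
are pullbacks, where $X=\sdot(\cD)$. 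The strategy is to identify each of these two diagrams, via explicit descriptions of the low-dimensional simplices and the simplicial operators between them, with one of the two pullback squares already established in \cref{augpb}.

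First I would describe the sets $X_0,X_1,X_2$ and the relevant operators. Evaluating an augmented double functor out of $\cW_0$, $\cW_1$, $\cW_2$ on generating data, and using the defining uniqueness property of the augmentation together with stability, yields natural bijections $X_0\cong \aug$ and $X_1\cong\Ob\cD$; indeed an augmented functor $\cW_1\to\cD$ is determined by the image of the object $01$, since the two augmentation objects are then forced. For $X_2$, a functor $G\colon\cW_2\to\cD$ is determined by its unique non-identity square $G(\alpha)$, which by stability is determined by its source span; because the left vertical leg $G(01\epi 11)$ is forced by the augmentation (as $11\in\aug$), the square is in fact determined by the single horizontal morphism $G(01\mono 02)$, giving $X_2\cong\Hor\cD$. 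Dually, using the target cospan and $f$ applied to $G(12)$, one obtains $X_2\cong\Ver\cD$. These are precisely the degree $0$ and degree $1$ parts of the isomorphism $\Pplus\sdot(\cD)\cong N\Horz{\cD}$ from \cref{prop:PplusminusSdot}.

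Next I would compute each operator in the two diagrams by tracing through the cosimplicial structure maps of $\cW_\bullet$. For the second diagram I use $X_2\cong\Hor\cD$: the bookkeeping identifies $d_1\colon X_1\to X_0$ with $\delta^h$, the inclusion $s_0\colon X_0\to X_1$ with $\aug\hookrightarrow\Ob\cD$, the degeneracy $s_0\colon X_1\to X_2$ with the section $f\colon\Ob\cD\to\Hor\cD$ of $t_h$, and the face $d_2\colon X_2\to X_1$ with $s_h\colon\Hor\cD\to\Ob\cD$. Hence the second diagram is exactly the first pullback square of \cref{augpb}. For the first diagram I instead use $X_2\cong\Ver\cD$; the analogous calculation identifies $d_0\colon X_1\to X_0$ with $\delta^v$, the same inclusion $s_0\colon X_0\hookrightarrow X_1$, the degeneracy $s_1\colon X_1\to X_2$ with the section $g\colon\Ob\cD\to\Ver\cD$ of $s_v$, and the face $d_0\colon X_2\to X_1$ with $t_v\colon\Ver\cD\to\Ob\cD$, so this diagram is precisely the second pullback square of \cref{augpb}.

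Since both diagrams are pullbacks by \cref{augpb}, \cref{lem:easyunital} gives that $\sdot(\cD)$ is unital. The main work, and the step most prone to error, is the bookkeeping in the third paragraph: correctly determining which coface or codegeneracy of $\cW_\bullet$ induces each operator, which corner of the generating square of $\cW_2$ it selects, and verifying that under stability and the augmentation property these operators agree on the nose with $f,g,\delta^h,\delta^v,s_h,t_v$. Choosing the horizontal description of $X_2$ for the second square and the vertical one for the first is what makes the relevant face maps ($d_2$ and $d_0$) reduce to the clean source and target maps, rather than to the opposite-corner map that stability determines only implicitly.
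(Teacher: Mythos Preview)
Your argument is correct and in fact a bit more conceptual than the paper's own proof. Both proofs begin identically, invoking \cref{lem:easyunital} to reduce to the two low-degree pullback squares. From there, however, the paper argues by a direct element chase: it takes an arbitrary element $(a,F)$ of the pullback $\sdotn{0}(\cD)\times_{\sdotn{1}(\cD)}\sdotn{2}(\cD)$, observes that the constraint $d_2F=s_0a$ forces the left column of the $\cW_2$-diagram to consist of identities, and then uses stability to conclude that the unique generating square of $F$ must be a vertical identity square, so that $F$ is uniquely recovered from $d_0F\in\sdotn{1}(\cD)$.

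You instead identify the two squares in question with the two pullback squares of \cref{augpb}, via the bijections $\sdotn{0}(\cD)\cong\aug$, $\sdotn{1}(\cD)\cong\Ob\cD$, and the two descriptions $\sdotn{2}(\cD)\cong\Hor\cD$ and $\sdotn{2}(\cD)\cong\Ver\cD$ already implicit in \cref{prop:PplusminusSdot}. This has the virtue of making explicit that unitality of $\sdot(\cD)$ is exactly the augmentation condition on $\cD$ in disguise, whereas the paper's argument uses stability more visibly and the augmentation only implicitly (through the shape of the pullback element). The cost of your route is the bookkeeping you flag in your last paragraph; the paper's element chase avoids naming $f,g,\delta^h,\delta^v$ at all but is perhaps less illuminating about \emph{why} the result holds.
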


\begin{proof}
By \cref{prop:PplusminusSdot} and \cref{lem:easyunital}, to prove that $\sdot(\cD)$ is unital it is enough to check that 
\begin{center}
\begin{tikzcd}
\sdotn{1}(\cD) \arrow[r, "d_0"] \arrow[d, "s_1"'] & \sdotn{0}(\cD) \arrow[d, "s_0"]\\
\sdotn{2}(\cD) \arrow[r, "d_0"']  & \sdotn{1}(\cD)
\end{tikzcd}
\quad\mbox{and}\quad
\begin{tikzcd}
\sdotn{1}(\cD) \arrow[r, "d_1"] \arrow[d, "s_0"'] & \sdotn{0}(\cD) \arrow[d, "s_0"]\\
\sdotn{2}(\cD) \arrow[r, "d_2"']  & \sdotn{1}(\cD)
\end{tikzcd}
\end{center}
are pullbacks. We prove that the square on the right is a pullback; the argument for the one on the left is similar.

An arbitrary element of the pullback is given by a pair $(a,F)$, where $a\in \aug=\sdotn{0}(\cD)$ and $F$ is an element of $\sdotn{2}(\cD)$, namely, an augmented double functor $\cW_{2}\to\cD$, which is of the form
\begin{center}
\begin{tikzpicture}[scale=1.4]
\draw (1,0.3) node(a00){$a$};
\draw (2,0.3) node(a01){$a$};
\draw (2,-0.7) node(a11) {$a$};
\draw (3, -0.7) node(a12){$F(12)$};
\draw (3, 0.3) node(a02){$F(02)$};
\draw (3,-1.7) node (a22){$F(22).$};

\draw[mono] (a00)-- node[anchor=south] {\scriptsize$\id_a^h$} (a01);
\draw[epi] (a01)-- node[anchor=east] {\scriptsize$\id_a^v$}(a11);
\draw[mono] (a11)--(a12);
\draw[mono] (a01)--(a02);
\draw[epi] (a12)--(a22);
\draw[epi] (a02)--(a12);

\draw[twoarrowlonger] (2.2,0.1)--(2.8,-0.5);
\end{tikzpicture}
\end{center}
By stability of $\cD$, the only square with horizontal source $\id_a^v$ is a vertical identity morphism, so $F(02)=F(12)$ and $F$ is of the form
\begin{center}
\begin{tikzpicture}[scale=1.4]
\draw (1,0.3) node(a00){$a$};
\draw (2,0.3) node(a01){$a$};
\draw (2,-0.7) node(a11) {$a$};
\draw (3, -0.7) node(a12){$F(12)$};
\draw (3, 0.3) node(a02){$F(12)$};
\draw (3,-1.7) node (a22){$F(22),$};

\draw[mono] (a00)-- node[anchor=south] {\scriptsize$\id_a^h$} (a01);
\draw[epi] (a01)-- node[anchor=east] {\scriptsize$\id_a^v$}(a11);
\draw[mono] (a11)--(a12);
\draw[mono] (a01)--(a02);
\draw[epi] (a12)--(a22);
\draw[epi] (a02)-- node[anchor=west] {\scriptsize$\id_{F(12)}^v$} (a12);

\draw[twoarrowlonger] (2.2,0.1)--(2.8,-0.5);
\end{tikzpicture}
\end{center}
and therefore $F$ is uniquely determined by $d_0F\in \sdotn{1}(\cD)$.
\end{proof}

Now we combine the previous results to show that $\sdot$ indeed defines the desired functor.
\begin{proof}[Proof of \cref{prop: first half}]
 \cref{prop:PplusminusSdot} implies by \cref{thm:PathSpaceCriterion} that $\sdot(\cD)$ is a $2$-Segal set for any augmented stable double category $\cD$. \cref{lem:UnitalitySdot} shows that  $\sdot(\cD)$ is also unital.  Functoriality is immediate by the definition of $\sdot$.

Finally, if $\D$ is a stable pointed double category, i.e., the augmentation set $\aug$ consists of exactly one point, then $\sdot \D$ is a reduced unital $2$-Segal set, since in this case $\sdotn{0}\D$ is a single point.
\end{proof}

\section{The path construction}\label{sec:path}

In this section we construct a functor in the other direction,
\[\cP \colon \untwoseg \to \asdc.\]
We first describe a construction on a unital 2-Segal set, then prove that its output is an augmented stable double category using \cref{stableassociative}. Finally, we establish that this assignment is functorial.

\begin{const}\label{const:path}
Let $X$ be a unital 2-Segal set. By \cref{thm:PathSpaceCriterion}, the path spaces $\Pplus X$ and $\Pminus X$ are 1-Segal, and hence, by \cref{fundcat}, we have categories $\cH = \tau_1 \Pplus X$ and $\cV = \tau_1 \Pminus X$, both with $X_1$ and $X_2$ as the sets of objects and morphisms, respectively. The source, target, and identity maps for $\cH$ are given by $d_2$, $d_1$, and $s_1$, respectively, and for $\cV$, they are given by $d_1$, $d_0$, and $s_0$. Composition  in $\cH$ and $\cV$ can be defined by
\[
X_2\times_{X_1} X_2 \underset{\cong}{\xrightarrow{(d_3,d_1)^{-1}}}  X_3 \xrightarrow{d_2}   X_2 \text{ and } X_2\times_{X_1} X_2 \underset{\cong}{\xrightarrow{(d_2,d_0)^{-1}}} X_3 \xrightarrow{d_1}   X_2,
\]
respectively. 

Let $\Sq = X_3$. We define the horizontal source and the horizontal target of a square by using the face maps $d_3$ and $d_2$, respectively, as shown in the diagram
$$\xymatrix{\Mor \cV\ar@{=}[d]&\Sq \ar@{-->}[l]_-{s_h}\ar@{-->}[r]^-{t_h}\ar@{=}[d]&\Mor \cV \ar@{=}[d]\\X_2&X_3\ar[l]^-{ d_3}\ar[r]_-{ d_2}&X_2,}$$
and the vertical source and vertical target of a square using the face maps $d_1$ and $d_0$, respectively, as shown in the diagram
$$\xymatrix{\Mor \cH \ar@{=}[d]&\Sq \ar@{-->}[l]_-{s_v}\ar@{-->}[r]^-{t_v}\ar@{=}[d]&\Mor \cH \ar@{=}[d]\\X_2&X_3\ar[l]^-{ d_1}\ar[r]_-{ d_0}&X_2.}$$
More explicitly, for a $3$-simplex $x\in X_3$ whose 2-dimensional faces are $m,m', e,$ and $e'$ as given in the diagram
\begin{center}
 \begin{tikzpicture}[scale=0.7]
 \def\l{3cm}
    \begin{scope} 
   \draw[fill] (-\l,0) node (b0){$0$};
   \draw[fill] (0,0)  node (b1){$1$};
   \draw[fill] (0,\l) node (b2){$2$};
   \draw[fill] (-\l,\l)  node (b3){$3$};

   \draw (b1)--(b2);
   \draw (b2)--(b3);
   \draw (b3)--(b0);
   \draw (b1)--(b0);
   \draw (b1)--(b3);
   
   \draw[epi] (-0.95*\l, 0.45*\l)-- node[below]{$e'$}(-0.55*\l, 0.45*\l);
   \draw[mono] (-0.05*\l, 0.55*\l)--node[above]{$m'$}(-0.45*\l, 0.55*\l);
   \end{scope}

    \begin{scope} [xshift=4cm]
   \draw[fill] (-\l,0) node (b0){$0$};
   \draw[fill] (0,0)  node (b1){$1,$};
   \draw[fill] (0,\l) node (b2){$2$};
   \draw[fill] (-\l,\l)  node (b3){$3$};

   \draw (b1)--(b2);
   \draw (b2)--(b3);
   \draw (b3)--(b0);
   \draw (b1)--(b0);
   \draw (b0)--(b2);
   
    \draw[epi] (-0.45*\l, 0.45*\l)-- node[below]{$e$}(-0.05*\l, 0.45*\l);
   \draw[mono] (-0.55*\l, 0.55*\l)--node[above]{$m$}(-0.95*\l, 0.55*\l);
   \end{scope}
   
 \end{tikzpicture}
\end{center}
the corresponding square in $\Sq$ is given by
\begin{center}
 \begin{tikzpicture}
 \def\l{3cm}
 \def\h{3cm}
   \draw (0,0) node (a1){$d_2d_1(x)$};
   \draw (\l,0) node (a2){$d_1d_1(x)$};
   \draw(-0.3*\l,-0.3*\h) node (b1){$d_1d_3(x)$};
   \draw(1.3*\l,-0.3*\h) node (b2){$d_1d_2(x)$};
      \draw(-0.3*\l,-1*\h) node (c1){$d_0d_3(x)$};
   \draw(1.3*\l,-1*\h) node (c2){$d_0d_2(x)$};
      \draw (0,-1.3*\h) node (d1){$d_2d_0(x)$};
   \draw (\l,-1.3*\h) node (d2){$d_1d_0(x).$};
   \draw (0.6*\l, -0.6*\h)node(middle){$x$};
   \draw[twoarrowlonger] (0.3*\l, -0.4*\h)--(0.7*\l, -0.9*\h);
  \draw[mono] (a1)--node[above](u){$m=d_1(x)$} (a2);
  \draw[epi] (b1)--node[left](l){$e=d_3(x)$} (c1);
  \draw[epi] (b2)--node[right](r){$e'=d_2(x)$} (c2);
  \draw[mono] (d1)--node[below](b){$m'=d_0(x)$} (d2);
  \draw[double, double distance=1pt] (a1)--(b1);
  \draw[double, double distance=1pt] (a2)--(b2);
  \draw[double, double distance=1pt] (c1)--(d1);
  \draw[double, double distance=1pt] (c2)--(d2);
 \end{tikzpicture}
\end{center}
Note that this particular description of $x$ also establishes that the horizontal source and target are compatible with the source and target in $\cH$, and analogously for $\cV$.
\end{const}

\begin{prop}
Let $X$ be a unital 2-Segal set. Then the data from \cref{const:path} above defines a stable double category $\pcat X$. Furthermore, the inclusion $s_0X_0 \subseteq X_1$ defines an augmentation for $\pcat X$.
\end{prop}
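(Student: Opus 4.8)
The plan is to apply \cref{stableassociative} to the data assembled in \cref{const:path}, and then to recognize the augmentation condition as a restatement of unitality via \cref{lem:easyunital}. First I would record that the hypotheses of \cref{stableassociative} concerning $\cH$ and $\cV$ are met: since $\Pplus X$ and $\Pminus X$ are $1$-Segal by \cref{thm:PathSpaceCriterion}, they are nerves of categories (\cref{prop:1segalnerve}), so $\cH=\tau_1\Pplus X$ and $\cV=\tau_1\Pminus X$ are genuine categories sharing the object set $X_1$ (\cref{fundcat}). The source and target maps $(s_h,t_h,s_v,t_v)=(d_3,d_2,d_1,d_0)$ are compatible with those of $\cH$ and $\cV$; this is exactly the remark closing \cref{const:path}, encoded by the simplicial identities $d_1d_3=d_2d_1$, $d_1d_1=d_1d_2$, $d_2d_0=d_0d_3$, $d_1d_0=d_0d_2$ read off the four corners of the displayed square.

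Next I would check the stability hypothesis \eqref{eqn_stability}, namely that
\[
(s_h,s_v)=(d_3,d_1)\colon X_3\to \Mor\cV\times_{\Ob{}}\Mor\cH
\quad\text{and}\quad
(t_v,t_h)=(d_0,d_2)\colon X_3\to \Mor\cH\times_{\Ob{}}\Mor\cV
\]
are bijections. Under $\Mor\cH=\Mor\cV=X_2$ and $\Ob{}=X_1$, the first map is precisely the $\cT$-Segal map $f_{\cT_2}$ for the diagonal $\{0,2\}$, and the second is $f_{\cT_1}$ for the diagonal $\{1,3\}$ composed with the swap of the two factors; both are isomorphisms because $X$ is $2$-Segal.

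The main work is the identity $\circ_h^1=\circ_h^2$, the identity $\circ_v^1=\circ_v^2$ being entirely analogous after exchanging $\Pplus$ with $\Pminus$ and the horizontal with the vertical direction. Given horizontally composable squares $\alpha,\beta\in X_3$, so $d_2\alpha=d_3\beta$, the key device is the unique $4$-simplex $\sigma\in X_4$ with $d_4\sigma=\alpha$ and $d_2\sigma=\beta$. Such a $\sigma$ exists and is unique because $\Pplus\Pplus X$ is $1$-Segal: applying \cref{thm:PathSpaceCriterion} to the $2$-Segal (indeed $1$-Segal) set $\Pplus X$, its degree-two Segal isomorphism is exactly $(d_4,d_2)\colon X_4\xrightarrow{\cong}X_3\times_{X_2}X_3$. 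I would then show that the single square $d_3\sigma$ equals both $\circ_h^1(\alpha,\beta)$ and $\circ_h^2(\alpha,\beta)$. On the source side, the simplicial identities give $s_h(d_3\sigma)=d_3\alpha=s_h\alpha$ and $s_v(d_3\sigma)=d_2d_1\sigma$, and the $3$-simplex $d_1\sigma$ (which satisfies $d_3d_1\sigma=d_1\alpha$, $d_1d_1\sigma=d_1\beta$) witnesses $d_2d_1\sigma$ as the $\cH$-composite of $s_v\alpha$ and $s_v\beta$; since $(s_h,s_v)$ is a bijection this forces $d_3\sigma=\circ_h^1(\alpha,\beta)$. Dually, $t_h(d_3\sigma)=d_2\beta=t_h\beta$ and $t_v(d_3\sigma)=d_2d_0\sigma$, with $d_0\sigma$ witnessing the $\cH$-composite of $t_v\alpha$ and $t_v\beta$, so bijectivity of $(t_v,t_h)$ forces $d_3\sigma=\circ_h^2(\alpha,\beta)$. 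The only delicate point is this bookkeeping: the mediating $4$-simplex must be chosen so that $d_1\sigma$ and $d_0\sigma$ simultaneously compute the two required $\cH$-composites, and I expect matching up the simplicial identities to be the one place demanding care. With stability and both composition identities verified, \cref{stableassociative} delivers that $\pcat X$ is a stable double category.

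Finally, for the augmentation I would take $A=s_0X_0\subseteq X_1=\Ob{\pcat X}$ and check the defining condition directly. A horizontal morphism into $d\in X_1$ with source in $A$ is an $m\in X_2$ with $d_1m=d$ and $d_2m\in s_0X_0$, and a vertical morphism out of $d$ with target in $A$ is an $e\in X_2$ with $d_1e=d$ and $d_0e\in s_0X_0$. Existence and uniqueness of each is precisely the assertion that the two squares of \cref{lem:easyunital} are pullbacks: the right-hand pullback identifies $\{m\mid d_2m\in s_0X_0\}$ with $s_0X_1$ through $m=s_0(d_1m)$, forcing $m=s_0d$, while the left-hand pullback identifies $\{e\mid d_0e\in s_0X_0\}$ with $s_1X_1$ through $e=s_1(d_1e)$, forcing $e=s_1d$. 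Since $X$ is unital these pullbacks hold, so $A$ is an augmentation and $\pcat X$ is an augmented stable double category.
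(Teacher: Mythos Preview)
Your proof is correct and follows essentially the same route as the paper: both verify the hypotheses of \cref{stableassociative} by showing the stability bijections are the two $2$-Segal maps for $n=3$, both establish $\circ_h^1=\circ_h^2$ by passing through the unique $\sigma\in X_4$ with $(d_4\sigma,d_2\sigma)=(\alpha,\beta)$ and identifying $d_3\sigma$ as the common value, and both deduce the augmentation from unitality. The only cosmetic difference is that you justify the bijectivity of $(d_4,d_2)$ via $1$-Segality of $\Pplus\Pplus X$, whereas the paper invokes the $2$-Segal map for a specific pentagon triangulation; these are the same isomorphism, and your octagon-free, element-wise presentation is a perfectly good substitute for the paper's commutative diagram.
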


\begin{proof}
To apply \cref{stableassociative} we need to show that:
\begin{enumerate}
\item condition \eqref{eqn_stability} holds, and

\item the two horizontal ``compositions'' of squares from (\ref{h1h2}) agree with each other, and similarly, the two analogous vertical ``compositions'' of squares  coincide.
\end{enumerate}

Condition \eqref{eqn_stability} follows from 2-Segality of $X$: every $3$-simplex of $X$ is completely determined by the pair $(d_2x, d_0x)$ or by the pair $(d_3x, d_1x)$, which in turn are the cospan of targets, or the span of sources, 
$$\xymatrix{
\ar@{=}[d]\Mor \cV \times_{\Ob} \Mor \cH &\ar@{=}[d]\Sq\ar[r]^-{(s_h,s_v)}\ar[l]_-{(t_h,t_v)}&\ar@{=}[d]\Mor\cV\times_{\Ob}\Mor\cH\\
X_2\times_{X_1}X_2&X_3\ar[r]^-{( d_3, d_1)}_-{\cong}\ar[l]^-{\cong}_-{( d_2, d_0)}&X_2\times_{X_1}X_2.}$$
Thus, once we show that $\pcat X$ is indeed a double category, we know that it is stable.

For horizontal composition, consider the following diagram. Starting at $X_3\times_{X_2} X_3$ at the left of the diagram and following along the top to $X_3$ is the first horizontal composition $\circ_h^1$ as defined in (\ref{h1h2}); following along the bottom to $X_3$ is the second horizontal composition $\circ_h^2$. 
\begin{equation}\label{octagon}
\begin{tikzpicture}[baseline=(x.base)]
\newdimen\R
\R=4.5cm
\draw (0,0) node (x) {$X_4$}
(0:\R) node (x_0) {$X_3$}
(45: \R) node (x_1) {$X_2\times_{X_1} X_2$}
(90: \R) node (x_2) {$X_2 \times_{X_1} X_3$}
(135: \R) node (x_3) {$(X_2\times_{X_1} X_2) \times_{X_1} X_2$}
(180: \R) node (x_4) {$X_3\times_{X_2} X_3$}
(225: \R) node (x_5) {$X_2\times_{X_1} (X_2\times_{X_1} X_2)$}
(270: \R) node (x_6) {$X_3\times_{X_1} X_2$}
(315: \R) node (x_7) {$X_2\times_{X_1} X_2$}
;
\draw[->] (x) -- node[above]{\scriptsize$d_3$}(x_0) ;
\draw[->] (x) -- node[left]{\scriptsize$(d_3d_4, d_1)$}(x_2) ;
\draw[->] (x) -- node[anchor=south] {\scriptsize$(d_4, d_2)$} (x_4) ;
\draw[->] (x) -- node[left]{\scriptsize$(d_0,d_2d_3)$}(x_6) ;

\draw[->] (x_4) -- node[anchor=east] {\scriptsize$(d_3,d_1)\times d_1$} (x_3);
\draw[->] (x_4) -- node[anchor=east] {\scriptsize$d_0\times (d_0, d_2)$} (x_5);
\draw[->] (x_2) -- node[anchor=north] {\scriptsize$\cong$} node[anchor=south, xshift=-0.5cm]{\scriptsize$\id\times(d_3,d_1)$}(x_3);
\draw[->] (x_2) -- node[above, xshift=0.2cm]{\scriptsize$\id\times d_2$}(x_1);
\draw[->] (x_0) -- node[anchor=west] {\scriptsize$\cong$} node[anchor=east]{\scriptsize$(d_3,d_1)$}(x_1);
\draw[->] (x_6) -- node[anchor=north, xshift=-0.7cm]{\scriptsize$(d_3,d_1) \times \id$}  node[anchor=south]{\scriptsize$\cong$}(x_5);
\draw[->] (x_6) --node[below, xshift=0.2cm]{\scriptsize$d_2\times \id$} (x_7);
\draw[->] (x_0) -- node[anchor=west] {$(d_0,d_2)$} node[anchor=east]{\scriptsize$\cong$}(x_7);

\end{tikzpicture}
\end{equation}

The 2-Segality of $X$ implies that the map $(d_4,d_2) \colon X_4\to X_3\times_{X_2}X_3$ is a bijection. Indeed, we have the commutative diagram 
\[
\xymatrixcolsep{5pc}\xymatrix{
X_4\ar[r]^-{(d_4,d_2)}\ar[d]_-{(d_0d_4,d_2d_4,d_1d_2)}^-{\cong} & X_3\times_{X_2} X_3\ar[d]_-{\cong}^-{ (d_0,d_2) \times (d_3,d_1)}\\
X_2\times_{X_1} X_2\times_{X_1} X_2 \ar[r]^-{\cong} & (X_2 \times_{X_1} X_2) \times_{X_2} (X_2\times_{X_1} X_2)}
\]
where the two vertical maps are bijections because $X$ is 2-Segal, and the bottom map is an isomorphism by a general limit argument.  The left vertical map is the $\cT$-Segal map corresponding to the following triangulation of the pentagon:
$$\begin{tikzpicture}[scale=1.5]
\draw (0:1) node[dot] (0) {} node[anchor=west] {0}
-- (72: 1) node[dot] (1) {} node[anchor=south west] {1}
-- (144:1) node[dot] (2) {} node[anchor=south east] {2}
-- (216:1) node[dot] (3) {} node[anchor=north east] {3}
-- (288:1) node[dot] (4) {} node[anchor= north west] {4.}
-- cycle;
\draw (0) -- (3) -- (1);
\end{tikzpicture}$$

We have shown that the diagram in \eqref{octagon} commutes and thus, that $$\circ_h=\circ_h^1=\circ_h^2 :X_3\times_{X_2} X_3 \underset{\cong}{\xrightarrow{(d_4,d_2)^{-1}}} X_4 \xrightarrow{d_3} X_3.$$

A similar argument shows that the two vertical compositions coincide and are given by
$$\circ_v=\circ_v^1=\circ_v^2 :X_3\times_{X_2} X_3 \underset{\cong}{\xrightarrow{(d_2,d_0)^{-1}}} X_4 \xrightarrow{d_1} X_3.$$

Lastly, to prove that $\cP X$ is augmented by $s_0X_0$, note that unitality of $X$ implies that the inner squares are pullbacks in the diagrams 
$$\xymatrix{
\Ob\ar@{-->}[rrr]\ar@{-->}[ddd]\ar@{=}[rd]&&&\aug\ar@{^{(}->}[ddd]\\
&X_1\ar[r]^-{d_1}\ar[d]_-{ s_0}&X_0\ar[d]^-{ s_0}\ar[ru]^-{s_0}_-{\cong}&\\
&X_2\ar[r]_-{d_2}&X_1&\\
\Mor\cH\ar@{=}[ru]\ar[rrr]_-{s_h}&&&\Ob\ar@{=}[lu]}\quad 
\xymatrix{
\Ob\ar@{-->}[rrr]\ar@{-->}[ddd]\ar@{=}[rd]&&&\aug\ar@{^{(}->}[ddd]\\
&X_1\ar[r]^-{d_0}\ar[d]_-{ s_1}&X_0\ar[d]^-{ s_0}\ar[ru]^-{s_0}_-{\cong}&\\
&X_2\ar[r]_-{d_0}&X_1&\\
\Mor\cV\ar@{=}[ru]\ar[rrr]_-{t_v}&&&\Ob\ar@{=}[lu]}
$$
from which it follows that the outer diagrams are pullbacks as well. The simplicial identities $d_1s_0=\id$ and $d_1s_1=\id$ show that the vertical maps interact appropriately with the target in $\Mor \cH$ and the source in $\Mor \cV$. The conclusion thus follows from \cref{augpb}.
\end{proof}

We conclude by proving that the path construction we have defined in this section is functorial.

\begin{prop}\label{prop: second half}
The construction $\pcat$ defines a functor
$$\pcat \colon \untwoseg\to\asdc,$$
which restricts to
$$\pcat \colon \untwoseg_*\to\psdc.$$
\end{prop}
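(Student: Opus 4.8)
The plan is as follows. The preceding proposition already produces, for each unital $2$-Segal set $X$, the augmented stable double category $\pcat X$; what remains is to define $\pcat$ on morphisms and to verify the functoriality axioms together with the restriction to the reduced case. First I would observe that a morphism of unital $2$-Segal sets is by definition simply a map of simplicial sets $g \colon X \to Y$, and that all of the structure of $\pcat X$ laid out in \cref{const:path} is assembled from the simplicial operators of $X$: the object set is $X_1$, the horizontal and vertical morphism sets are both $X_2$, the square set is $X_3$, and the source, target, identity, and composition maps are built from the face maps $d_i$, the degeneracies $s_i$, and the inverses of the $\cT$-Segal maps. I would therefore define $\pcat g$ to act as $g_1$ on objects, as $g_2$ on both horizontal and vertical morphisms, and as $g_3$ on squares.

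The core of the proof is then to check that $(g_1, g_2, g_2, g_3)$ is an augmented double functor. Compatibility with all source, target, and identity maps is immediate, since each of these is one of the operators $d_0, d_1, d_2, d_3, s_0, s_1$ and $g$, being simplicial, commutes with every face and degeneracy map. Compatibility with the compositions in $\cH$ and $\cV$ and with the horizontal and vertical compositions of squares requires the additional observation that $g$ commutes not only with the Segal maps $(d_3, d_1)$, $(d_2, d_0)$, and $(d_4, d_2)$, but also with their inverses; this follows from the general fact that a map intertwining two bijections also intertwines their inverses, applied to the $\cT$-Segal maps, which are bijections on both $X$ and $Y$ by $2$-Segality. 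That $\pcat g$ is augmented is the assertion that $g_1$ carries $s_0 X_0$ into $s_0 Y_0$, which holds because the simplicial identity gives $g_1 s_0 = s_0 g_0$.

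Functoriality of $\pcat$ is then formal. The double functor $\pcat(\id_X)$ acts as the identity in each relevant simplicial degree and hence is the identity on $\pcat X$, while for composable simplicial maps $g$ and $g'$ the equalities $(g' g)_n = g'_n g_n$ in degrees $n \in \{1,2,3\}$ give $\pcat(g' g) = \pcat(g') \pcat(g)$. Finally, to see that $\pcat$ restricts to a functor $\untwoseg_* \to \psdc$, I would note that if $X_0$ is a single point then the augmentation $s_0 X_0 \subseteq X_1$ consists of a single object, so $\pcat X$ is pointed; and any map of reduced unital $2$-Segal sets necessarily sends the unique $0$-simplex to the unique $0$-simplex, so $\pcat g$ is a pointed double functor.

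I expect no serious obstacle here: the conceptual content is already contained in the preceding proposition, and the only point demanding genuine (if routine) care is the verification that $g$ intertwines the inverse Segal maps entering the composition operations, which is precisely where $2$-Segality of both the source and the target is used.
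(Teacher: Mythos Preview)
Your proposal is correct and follows essentially the same approach as the paper. The only stylistic difference is that the paper packages the definition of $\pcat g$ on the horizontal and vertical categories via the functoriality of $\Pplus$ and $\Pminus$ (and then adjoins $g_3$ on squares), whereas you specify $g_1,g_2,g_2,g_3$ directly; the verifications of compatibility with source, target, identities, composition, and augmentation proceed the same way in both.
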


\begin{proof}
Let $f \colon X \rightarrow X'$ be a map between unital $2$-Segal sets. For ease of notation, we denote by $\Horz$, $\Verz$, and $\Sq$ the horizontal category of objects, the vertical category of objects, and the set of squares of $\pcat X$, and by $\Horz '$, $\Verz '$, and $\Sq '$ the corresponding ones for $\pcat X'$.

Since $\Pplus$ and $\Pminus$ are functors, we obtain maps of simplicial sets $\Pplus X\to  \Pplus X'$ and $\Pminus X\to  \Pminus X'$, which in turn uniquely determine functors
$$\Horz \xrightarrow{f_{h}} \Horz' \quad\text{and}\quad \Verz \xrightarrow{f_{v}} \Verz'$$
on the horizontal and vertical categories of objects, respectively.  Moreover, both $f_{h}$ and $f_{v}$ agree with $f_1$ on the set of objects $\Ob(\pcat X)=X_1$. Note that the assignment is compatible with the augmentation, since $f_1s_0=s'_0f_0$.  The component $f_3 \colon X_3\to X_3'$ determines a function $\Sq\to\Sq'$ between the sets of squares.

It remains to check compatibility of the assignment with vertical and horizontal source and target, identities, and composition.  All the arguments are similar, depending only on the compatibility of $f_3$ with the simplicial maps; to illustrate, we establish compatibility with composition using the commutativity of the following diagram: 
\[\xymatrix@C=1.9pc{\ar[ddd]_-{\pcat f|_{\Sq}\times\pcat f|_{\Sq}}\Sq\times_{\Ver} \Sq\ar@{=}[rd]\ar[rrrr]^-{\circ_h}&&&&\ar[ddd]^-{\pcat f|_{\Sq}}\Sq\ar@{=}[ld]\\
&X_3\times_{X_2} X_3\ar[d]_-{f_3\times f_3}&X_4\ar[d]|-{f_4}\ar[l]^-{\cong}_-{( d_4, d_2)}\ar[r]^-{ d_3}&X_3\ar[d]^-{f_3}&\\
&X'_3\times_{X'_2} X'_3&X'_4\ar[l]_-{\cong}^-{( d'_4, d'_2)}\ar[r]_-{ d'_3}&X'_3&\\
\ar@{=}[ru]\Sq'\times_{\Ver'} \Sq'\ar[rrrr]_-{\circ'_h}&&&&\ar@{=}[lu]\Sq'.}\]

Note that the restriction to reduced unital 2-Segal sets has image in pointed stable double categories and pointed double functors.
\end{proof}

\section{The equivalence}

In this section we prove our main theorem, which states that the functors $\sdot$, defined in \cref{defn:sdot}, and $\pcat$, defined in \cref{const:path}, are inverse to one another.

\begin{thm}\label{thm:main_thm}
The functors $\sdot$ and $\pcat$ define an equivalence of categories 
\[ \sdot \colon \asdc \overset{\simeq}\longleftrightarrow \untwoseg \colon \pcat \]
which restricts to an equivalence of categories
\[ \sdot \colon \sdc_* \overset{\simeq}\longleftrightarrow \untwoseg_* \colon \pcat. \]
\end{thm}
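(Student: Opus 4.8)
The plan is to exhibit natural isomorphisms realizing $\pcat\circ\sdot\cong\id_{\asdc}$ and $\id_{\untwoseg}\cong\sdot\circ\pcat$. Since $\sdot$ (by \cref{prop: first half}) and $\pcat$ (by \cref{prop: second half}) are already known to be functors between the stated categories, producing these two natural isomorphisms yields the equivalence, and the reduced/pointed statement follows by restriction. I would treat the two composites by parallel methods, in each case leaning on the explicit low-degree identifications that the path-space machinery provides.

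For $\pcat\circ\sdot\cong\id$, fix an augmented stable double category $\cD$ and assemble the comparison double functor $\Phi_\cD\colon\pcat\sdot(\cD)\to\cD$ out of four natural bijections. On objects, \cref{prop:PplusminusSdot} gives $\Ob(\pcat\sdot\cD)=\sdotn{1}(\cD)\cong\Ob\cD$. On the horizontal and vertical categories, \cref{prop:PplusminusSdot} yields $\Pplus\sdot(\cD)\cong N\Horz\cD$ and $\Pminus\sdot(\cD)\cong N\Verz\cD$, so applying $\tau_1$ and using that $\tau_1 N\cong\id$ on categories (\cref{fundcat}) gives $\Horz(\pcat\sdot\cD)=\tau_1\Pplus\sdot(\cD)\cong\Horz\cD$ and $\Verz(\pcat\sdot\cD)\cong\Verz\cD$; these agree with the object bijection. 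On squares, $\Sq(\pcat\sdot\cD)=\sdotn{3}(\cD)$, which the path-space identification together with stability identifies with $\Hor\cD\times_{\Ob\cD}\Hor\cD\cong\Ver\cD\times_{\Ob\cD}\Hor\cD\cong\Sq\cD$, exactly as in \cref{augstable are easy} and \cref{bijectionhorizontalvertical}. I would then check that these four bijections respect the augmentation and all source, target, composition, and identity maps; \cref{rmk stable double functor} reduces the square-level verification, since a double functor into the stable category $\cD$ is pinned down by its effect on objects and on horizontal and vertical morphisms. Being bijective on each of the four underlying sets, $\Phi_\cD$ is then an isomorphism of augmented stable double categories, and naturality is inherited from that of each constituent identification.

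For $\id\cong\sdot\circ\pcat$, fix a unital $2$-Segal set $X$. Both $X$ and $\sdot\pcat(X)$ are unital $2$-Segal sets (\cref{prop: first half}), so by \cref{prop:3iso} it suffices to produce a natural simplicial map that is a bijection in degrees $0,1,2$. I would define it from the identifications $\sdotn{0}(\pcat X)=\aug(\pcat X)=s_0X_0\cong X_0$, $\sdotn{1}(\pcat X)\cong\Ob(\pcat X)=X_1$, and $\sdotn{2}(\pcat X)\cong\Hor(\pcat X)=X_2$, checking that under these the face and degeneracy operators of $\sdot\pcat(X)$ in low degrees match those of $X$ (here one uses the defining face maps of $\pcat X$ via $d_0,\dots,d_3$ on $X_3$ together with the cosimplicial structure of $\cW_\bullet$). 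Once these three bijections are seen to commute with the relevant simplicial operators, \cref{prop:3iso} extends them to a simplicial isomorphism and certifies that it is an isomorphism; naturality again follows from that of the pieces. The restriction to the reduced/pointed case is then immediate: $X$ is reduced exactly when $\aug(\pcat X)=s_0X_0$ is a singleton, i.e.\ $\pcat X$ is pointed, and $\sdot$ of a pointed stable double category is reduced (as noted at the end of the proof of \cref{prop: first half}), so both natural isomorphisms restrict to the equivalence $\sdot\colon\psdc\leftrightarrow\untwoseg_*\colon\pcat$.

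I expect the main obstacle to be the compatibility bookkeeping rather than any single hard idea: verifying that the separately constructed bijections on objects, horizontal morphisms, vertical morphisms, and squares genuinely fit together into a double functor (respectively a simplicial map), with all sources, targets, compositions, and identities matching on the nose. The delicate point is that the horizontal data is controlled by $\Pplus$ and the vertical data by $\Pminus$, so one must confirm that the two path-space identifications agree on the shared object set and are interchanged correctly by stability on squares. \cref{rmk stable double functor}, \cref{bijectionhorizontalvertical}, and the stability bijections are precisely the tools that keep this check finite and force the square-level data, but the index-matching (ensuring no face map is secretly off by a reflection) requires care.
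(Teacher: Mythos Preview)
Your proposal is correct and follows essentially the same route as the paper. Both directions use the same ingredients: for $\pcat\sdot\cD\cong\cD$ you invoke \cref{prop:PplusminusSdot} to identify the horizontal and vertical categories and then stability to handle squares, and for $X\cong\sdot\pcat X$ you reduce via \cref{prop:3iso} to low degrees. Your abstract description of the square-level map (via the chain $\sdotn{3}(\cD)\cong\Hor\cD\times_{\Ob\cD}\Hor\cD\cong\Sq\cD$) unwinds to exactly the paper's concrete map, which sends $F\colon\cW_3\to\cD$ to the image of the central square on $02,03,12,13$.

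One small point to tighten: in the $X\to\sdot\pcat X$ direction you write that ``by \cref{prop:3iso} it suffices to produce a natural simplicial map that is a bijection in degrees $0,1,2$,'' but part~(\ref{existence}) of \cref{prop:3iso} requires compatible data through degree~$3$ for the simplicial map to \emph{exist} in the first place; only once it exists do parts~(\ref{unique}) and~(\ref{isomorphism}) let you check the isomorphism in degrees $\leq 2$. The paper handles this by writing down $\eta_X$ explicitly on $3$-simplices (the staircase diagram built from iterated faces and degeneracies of $x\in X_3$). This is routine bookkeeping rather than a new idea, but it is a step you will need to carry out.
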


We need to show that there are natural isomorphisms
$$\id_{\untwoseg} \overset{\cong}{\longrightarrow} \sdot \pcat \quad\mbox{and}\quad \pcat \sdot \overset{\cong}{\longrightarrow} \id_{\asdc}.$$
The first isomorphism is proven in \cref{prop:eta} and the second in \cref{prop:equiv2}.

\begin{prop}\label{prop:eta}
Let $X$ be a unital 2-Segal set. There is a natural isomorphism of simplicial sets
$$\eta_X \colon X\xrightarrow{\,\,\cong\,\,}\sdot(\pcat X).$$
\end{prop}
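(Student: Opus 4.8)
The plan is to exploit \cref{prop:3iso}, using that both $X$ and $Y:=\sdot(\pcat X)$ are unital $2$-Segal sets — the former by hypothesis, the latter because $\pcat X$ is an augmented stable double category (as just shown) and $\sdot$ sends such categories to unital $2$-Segal sets by \cref{prop: first half}. By part~\eqref{existence} of \cref{prop:3iso} it then suffices to produce maps $\eta_{X,i}\colon X_i \to Y_i$ for $i=0,1,2,3$ that are compatible with the faces and degeneracies relating these degrees; by part~\eqref{unique} the resulting simplicial map is unique, and by part~\eqref{isomorphism} it is an isomorphism as soon as $\eta_{X,0},\eta_{X,1},\eta_{X,2}$ are bijections.

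\textbf{Constructing the low-degree maps.} For $x\in X_n$ with $n\le 3$, I would define an augmented double functor $\Phi_x\colon \cW_n\to \pcat X$ by sending the object $ij$ to the $1$-simplex $x_{\{i,j\}}$ spanned by the vertices $i\le j$, the horizontal morphism $ij\mono ik$ to $x_{\{i,j,k\}}\in X_2=\Mor\cH$, the vertical morphism $ij\epi kj$ to $x_{\{i,k,j\}}\in X_2=\Mor\cV$, and the square with corners $i\le k\le j\le l$ to $x_{\{i,k,j,l\}}\in X_3=\Sq$; set $\eta_{X,n}(x)=\Phi_x$. Using the description of the structure maps of $\pcat X$ from \cref{const:path} (sources, targets and identities given by specified face and degeneracy maps, compositions by $(d_3,d_1)^{-1}d_2$ and $(d_2,d_0)^{-1}d_1$), the verifications that $\Phi_x$ respects all source, target, identity and composition maps, and that the diagonal objects $x_{\{i,i\}}=s_0(\,\cdot\,)$ land in the augmentation $s_0X_0$, all reduce to simplicial identities in $X$; moreover, by \cref{rmk stable double functor} the assignment on squares is forced by that on morphisms once stability of $\pcat X$ is invoked. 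The same bookkeeping shows that the $\eta_{X,i}$ commute with the faces and degeneracies among degrees $\le 3$.

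\textbf{Bijectivity in degrees $0,1,2$.} In degree $0$, $\sdotn{0}(\pcat X)=\Hom_{\adc}(\cW_0,\pcat X)$ is exactly the augmentation $s_0X_0$, and $\eta_{X,0}\colon v\mapsto s_0v$ is a bijection onto it. In degree $1$, an augmented double functor $\cW_1\to\pcat X$ is pinned down by the image of the object $01$: the morphisms $00\mono 01$ and $01\epi 11$ are the unique ones provided by the augmentation (\cref{augpb}), so $\eta_{X,1}$ is a bijection $X_1\cong\sdotn{1}(\pcat X)$. Degree $2$ is the crux: given $F\colon\cW_2\to\pcat X$, the augmentation forces $F$ on the three diagonal objects and on the boundary morphisms $00\mono01$, $11\mono12$, $01\epi11$, $12\epi22$, while stability forces the value on the unique nonidentity square from its span of sources. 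Hence $F$ is determined by the single horizontal morphism $F(01\mono02)\in X_2$, and conversely every element of $X_2$ assembles consistently into such an $F$ by the same augmentation-and-stability analysis; this is precisely $\eta_{X,2}$, which is therefore a bijection.

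\textbf{Conclusion and naturality.} By part~\eqref{existence} of \cref{prop:3iso} the maps $\eta_{X,i}$, $i\le 3$, extend to a unique simplicial map $\eta_X\colon X\to\sdot(\pcat X)$, and by part~\eqref{isomorphism} it is an isomorphism since $\eta_{X,0},\eta_{X,1},\eta_{X,2}$ are bijections. For naturality, given $f\colon X\to X'$ one must check $\sdot(\pcat f)\circ\eta_X=\eta_{X'}\circ f$; as both sides are simplicial maps into the $2$-Segal set $\sdot(\pcat X')$, part~\eqref{unique} of \cref{prop:3iso} reduces this to an identity in degrees $\le 2$, where it is immediate because $\Phi_x$ is built from simplicial operators, which commute with $f$. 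I expect the main obstacle to be the degree-$2$ bijection, together with the routine but lengthy verification that the low-degree assignments genuinely define augmented double functors and are simplicially compatible — this is exactly where stability is needed to reconstruct the squares and the augmentation is needed to reconstruct the boundary morphisms.
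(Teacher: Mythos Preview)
Your proposal is correct and follows essentially the same approach as the paper: both invoke \cref{prop:3iso} to reduce to degrees $\le 3$, define the map explicitly there, and verify bijectivity in degrees $0,1,2$ using the augmentation and stability of $\pcat X$. Your uniform ``$x_{\{i,j,\ldots\}}$'' notation is in fact a cleaner packaging of exactly the face/degeneracy formulas the paper writes out (e.g.\ your $x_{\{0,0,1\}}=s_0x$, $x_{\{0,1,1\}}=s_1x$ for $x\in X_1$), and your degree-$2$ argument — that an augmented double functor $\cW_2\to\pcat X$ is determined by the image of $01\mono 02$ — unpacks what the paper summarizes as ``surjectivity follows from the fact that $\pcat X$ is stable and augmented.''
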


\begin{proof}
By \cref{prop:3iso}, since $X$ and $\sdot(\pcat X)$ are 2-Segal sets, it is enough to define $\eta_X$ at the level of 0-, 1-, 2-, and 3-simplices, and prove that it is an isomorphism at levels 0, 1, and 2.

Recall that $\sdotn{0}(\pcat X)$ is the augmentation of $\pcat X$, which by construction is the subset $s_0 X_0 $ of $X_1$. Thus, we can define $\eta_X$ at the level of 0-simplices as 
\[s_0\colon X_0 \to \sdotn{0}(\pcat X)=s_0X_0.\]
It is an isomorphism since $s_0$ is injective.

We define $\eta_X$ on 1-simplices as
\begin{center}
\begin{tikzpicture}
\draw (0,-2) node(x1){$x\in X_1$};
\draw (2,-2) node(s1){};
 \begin{scope}[yshift=-1.2cm, xshift=0.5cm]
\draw (2,0) node(a00){$s_0d_1x$};
\draw (4,0) node(a01){$x$};
\draw (4,-1.5) node(a11) {$s_0d_0x.$};
\draw[mono] (a00)--node[above]{$s_0x$}(a01);
\draw[epi] (a01)--node[right]{$s_1x$}(a11);
\end{scope}
\draw[mapstikz] (x1)--node[below]{}(s1);
\end{tikzpicture}
\end{center}
\noindent One can check that this output is an allowed 1-simplex in $\sdot(\pcat X)$, and that the sources and targets are as stated. Moreover, it is not hard to see that this map is injective. The fact that $\pcat X$ is an augmented double category, which follows from the unitality of $X$, implies surjectivity.

We define $\eta_X$ on 2-simplices as
\begin{center}
\begin{tikzpicture}
\draw (0, -5.7) node(x2){$x\in X_2$};
\draw (2,-5.7)node(s2){};
\draw[mapstikz](x2)--node[below]{}(s2);
 \begin{scope}[yshift=-5cm, xshift=2cm, scale=1.5]
   \draw (0.5,0.3) node(a00){$s_0d_1d_2x$};
\draw (2,0.3) node(a01){$d_2x$};
\draw (2,-0.7) node(a11) {$s_0d_1d_0x$};
\draw (3.5, -0.7) node(a12){$d_0x$};
\draw (3.5, 0.3) node(a02){$d_1x$};
\draw (3.5,-1.7) node (a22){$s_0d_0d_0x.$};
\draw[mono] (a00)--node[above]{$s_0d_2x$}(a01);
\draw[epi] (a01)--node[left]{$s_1d_2x$}(a11);
\draw[mono] (a11)--node[below]{$s_0d_0x$}(a12);
\draw[mono] (a01)--node[above]{$x$}(a02);
\draw[epi] (a12)--node[right]{$s_1d_0x$}(a22);
\draw[epi] (a02)--node[right]{$x$}(a12);
   
   \draw[twoarrowlonger] (2.2,0.1)--node[above]{$s_1x$}(3.3,-0.5); 
\end{scope}
\end{tikzpicture}
\end{center}
\noindent Again, it is routine to check that this diagram defines an allowed 2-simplex in $\sdot(\pcat X)$. Injectivity is immediate, and surjectivity follows from the fact that $\pcat X$ is stable and augmented.

Finally, we define $\eta_X$ on 3-simplices as 

{\small{
\begin{center}
 \begin{tikzpicture}[scale=0.8]

\draw (0, -11) node(x3){$x\in X_3$};
\draw (2,-11)node(s3){};
\draw[mapstikz](x3)--node[below]{}(s3);
 \begin{scope}[yshift=-10cm, xshift=0cm, scale=2.6]

   \draw (1,0.3) node(a00){$s_0d_1d_2d_3x$};
\draw (2,0.3) node(a01){$d_2d_3x$};
\draw (2,-0.7) node(a11) {$s_0d_1d_0d_3x$};
\draw (3, -0.7) node(a12){$d_2d_0x$};
\draw (3, 0.3) node(a02){$d_2d_1x$};
\draw (3,-1.7) node (a22){$s_0d_1d_0d_0x$};
\draw (4, 0.3) node (a03){$d_1d_1x$};
\draw (4, -0.7) node (a13){$d_1d_0x$};
\draw (4, -1.7) node (a23){$d_0d_0x$};
\draw (4, -2.7) node (a33){$s_0d_0d_0d_0x.$};
\draw[mono] (a00)--node[above]{$s_0d_2d_3x$}(a01);
\draw[mono] (a11)--node[below]{$s_0d_0d_3x$}(a12);
\draw[mono] (a01)--node[above]{$d_3x$}(a02);
\draw[mono] (a02)--node[above]{$d_1x$}(a03);
\draw[mono] (a12)--node[below]{$d_0x$}(a13);
\draw[mono] (a22)--node[below]{$s_0d_0d_0x$}(a23);
\draw[epi] (a01)--node[left]{$s_1d_2d_3x$}(a11);
\draw[epi] (a12)--node[left]{$s_1d_2d_0x$}(a22);
\draw[epi] (a02)--node[right]{$d_3x$}(a12);
\draw[epi] (a03)--node[right]{$d_2x$}(a13);
\draw[epi] (a13)--node[right]{$d_0x$}(a23);
\draw[epi] (a23)--node[right]{$s_1d_0d_0x$}(a33);
   \draw[twoarrowlonger] (2.2,0.1)--node[above,xshift=0.3cm]{$s_1d_3x$}(2.8,-0.5); 
   \draw[twoarrowlonger] (3.2,0.1)--node[above]{$x$}(3.8,-0.5); 
   \draw[twoarrowlonger] (3.2,-0.9)--node[above,xshift=0.3cm]{$s_1d_0x$}(3.8,-1.5); 
\end{scope}

 \end{tikzpicture}
\end{center}
}}

A standard argument using the simplicial identities shows that these maps are compatible with the simplicial maps, thus showing that we have constructed an isomorphism of simplicial sets.
\end{proof}

Next, we want to show that the composite $ \pcat\sdot \colon \asdc \to \asdc$ is naturally isomorphic to the identity functor.

\begin{prop}\label{prop:equiv2}
Let $\cD$ be an augmented stable double category. There is a natural isomorphism of double categories
$$\varepsilon_{\cD}: \pcat\sdot (\cD)\xrightarrow{\,\,\cong\,\,}\cD.$$
\end{prop}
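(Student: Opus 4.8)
The plan is to build $\varepsilon_{\cD}$ by identifying, one at a time, the four categories that constitute the double category $\pcat\sdot(\cD)$ — its horizontal and vertical categories of objects, $\Horz(\pcat\sdot(\cD))$ and $\Verz(\pcat\sdot(\cD))$, and of morphisms, $\Horo(\pcat\sdot(\cD))$ and $\Vero(\pcat\sdot(\cD))$ — with the corresponding four categories of $\cD$, and then checking that these identifications are compatible with the source, target, identity, and composition maps relating them, so that they assemble into an isomorphism of double categories. The two key inputs are \cref{prop:PplusminusSdot}, which supplies natural isomorphisms $\Pplus\sdot(\cD)\cong N\Horz{\cD}$ and $\Pminus\sdot(\cD)\cong N\Verz{\cD}$, and \cref{augstable are easy}, which identifies $\tau_1\Pplus(N\Horz{\cD})\cong\Horo{\cD}$ and $\tau_1\Pminus(N\Verz{\cD})\cong\Vero{\cD}$ compatibly with the relevant source functors.

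First I would treat the categories of objects. By \cref{const:path}, $\Horz(\pcat\sdot(\cD))=\tau_1\Pplus\sdot(\cD)$. Applying $\tau_1$ to the isomorphism $\Pplus\sdot(\cD)\cong N\Horz{\cD}$ of \cref{prop:PplusminusSdot} and using that $\tau_1 N\cong\id$ on categories (\cref{fundcat}), I obtain an isomorphism of categories $\Horz(\pcat\sdot(\cD))\cong\Horz{\cD}$; dually, $\Verz(\pcat\sdot(\cD))=\tau_1\Pminus\sdot(\cD)\cong\Verz{\cD}$. On objects both restrict to the level-$0$ bijection $\sdotn{1}(\cD)=(\Pplus\sdot(\cD))_0\cong(N\Horz{\cD})_0=\Ob{\cD}$, so they agree there; on morphisms they give bijections $\sdotn{2}(\cD)\cong\Hor{\cD}$ and $\sdotn{2}(\cD)\cong\Ver{\cD}$, whose composite recovers the bijection $\Hor{\cD}\cong\Ver{\cD}$ of \cref{bijectionhorizontalvertical}.

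Next I would identify the squares. The set of squares of $\pcat\sdot(\cD)$ is $\sdotn{3}(\cD)=(\Pplus\sdot(\cD))_2$, which \cref{prop:PplusminusSdot} identifies with $(N\Horz{\cD})_2=\Hor{\cD}\times_{\Ob{\cD}}\Hor{\cD}$, the composable pairs of horizontal morphisms; the proof of \cref{augstable are easy} then supplies a natural bijection of the latter with $\Sq{\cD}$ via $\Hor{\cD}\times_{\Ob{\cD}}\Hor{\cD}\cong\Ver{\cD}\times_{\Ob{\cD}}\Hor{\cD}\xleftarrow{\cong}\Sq{\cD}$, using \cref{bijectionhorizontalvertical} and stability. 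Together with the commuting triangles recorded in \cref{prop:PplusminusSdot} and \cref{augstable are easy} (and their $\Pminus$/$\Vero$ duals), this shows the four isomorphisms respect the source--target functors relating $\Horo$ to $\Horz$ and $\Vero$ to $\Verz$, so they assemble into a double functor $\varepsilon_{\cD}\colon\pcat\sdot(\cD)\to\cD$. One can then conclude $\varepsilon_{\cD}$ is an isomorphism either directly from the four categorical isomorphisms, or, more economically, by \cref{rmk stable double functor}: the data on objects and on horizontal and vertical morphisms already pins down a unique double functor into the stable double category $\cD$, and since both $\pcat\sdot(\cD)$ and $\cD$ are stable, every square is determined by its source span, so the bijections on objects and morphisms force $\varepsilon_{\cD}$ to be bijective on squares as well. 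Naturality is inherited from that of all the cited isomorphisms, and for the pointed statement one observes that when $\aug$ is a single point $\sdot(\cD)$ is reduced, so $\pcat\sdot(\cD)$ is pointed and $\varepsilon_{\cD}$ is a pointed double functor.

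The step I expect to be the main obstacle is precisely the compatibility flagged in \cref{rmk stable double functor}: the horizontal and vertical morphism identifications agree on objects, but that is \emph{not} by itself enough to guarantee a well-defined double functor, since the assignment on squares must be consistent with both directions simultaneously. Making this precise is what forces me to invoke the square-level identification from \cref{augstable are easy} together with its commuting triangles, and to check that the square bijection obtained through the $\Pplus$/$\Horo$ route agrees with the one obtained through the $\Pminus$/$\Vero$ route; verifying that these genuinely coincide on $\Sq{\cD}$, rather than merely gluing two one-dimensional identifications, is the technical heart of the argument.
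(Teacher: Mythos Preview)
Your overall strategy is sound and opens the same way the paper does: both begin with the isomorphisms $\tau_1\Pplus\sdot(\cD)\cong\Horz{\cD}$ and $\tau_1\Pminus\sdot(\cD)\cong\Verz{\cD}$ coming from \cref{prop:PplusminusSdot}, and both invoke \cref{rmk stable double functor} to note that these pin down at most one double functor while leaving existence open. The divergence is in how the square map is produced.

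You propose to obtain the bijection $\Sq(\pcat\sdot(\cD))=\sdotn{3}(\cD)\cong\Sq{\cD}$ by composing the level-$2$ part of $\Pplus\sdot(\cD)\cong N\Horz{\cD}$ with the morphism-level bijection $\Hor{\cD}\times_{\Ob{\cD}}\Hor{\cD}\cong\Sq{\cD}$ from the proof of \cref{augstable are easy}. This is correct, but as you rightly flag, it manufactures the square map through the $\Pplus$ side, and that route controls $s_h$, $t_h$, $s_v$ but \emph{not} $t_v=d_0$, which is precisely the face map lying outside $\Pplus$. Agreement with the $\Pminus$ route is therefore the crux, and you leave it as a promissory note. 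In practice the verification amounts to unwinding both routes on a given $F\colon\cW_3\to\cD$ and observing that each extracts the image of the ``middle square'' with corners $02,03,12,13$ --- once via its source span, once via its target cospan --- whence stability of $\cD$ forces agreement. (A small inaccuracy: \cref{prop:PplusminusSdot} does not record commuting triangles as such; the compatibilities you need come from the fact that its isomorphisms are simplicial.)

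The paper sidesteps this coincidence-of-two-bijections issue altogether. Rather than deriving the square map from path-space machinery, it \emph{defines} $\varepsilon_{\cD}$ on squares directly by $F\mapsto F(\text{middle square})$, checks by hand that this respects all four source and target maps, and then proves bijectivity by explicitly reconstructing $F\colon\cW_3\to\cD$ from a given square in $\cD$, using augmentation and stability to fill in the outer rim of the $\cW_3$-diagram. This is less conceptual than your route but more self-contained. The paper also includes a verification that $\varepsilon_{\cD}$ is \emph{augmented} (objects in $s_0\sdotn{0}(\cD)\subset\sdotn{1}(\cD)$ land in $\aug\subset\Ob{\cD}$), which your proposal omits and which is needed for $\varepsilon_{\cD}$ to be a morphism in $\asdc$.
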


\begin{proof} 
Recall from \cref{fundcat} that the fundamental category functor $\tau_1$ is the left adjoint of the nerve functor, with the counit being an isomorphism.  Given an augmented stable double category $\cD$, the maps constructed in \cref{prop:PplusminusSdot} have adjoints $\tau_1\Pplus\sdot(\cD)\to \Horz{\cD}$ and $\tau_1\Pminus\sdot(\cD)\to \Verz{\cD}$ which are isomorphisms of categories and are natural in $\cD$. 
By \cref{rmk stable double functor}, if there is a double functor $\varepsilon_{\cD}$, it is already determined by these two isomorphisms. However, it is not immediate that they assemble to a double functor. Rather than checking their compatibility, we will construct a map on the squares of $\cP \sdot(\cD)$ which is compatible with these functors.

Recall that the set of squares of $\pcat\sdot(\cD)$ is given by $\sdotn{3}(\cD)$, which in turn is defined to be
\[
\Hom_{\adc}(\cW_3, \cD).
\]
To such a double functor $F\colon \cW_3\to \cD$, let $\varepsilon_{\cD}$ assign the square in $\cD$ which is the image of the square
 \begin{center}
  \begin{tikzpicture}[scale=0.6]
    \def\l{2cm}
    \begin{scope}
   \draw[fill] (0,0) node (b0){$13$};
   \draw[fill] (-\l,\l) node (b2){$02$};
   \draw[fill] (-\l,0) node (b3){$12$};
   \draw[fill] (0,\l) node (b1){$03$};

   \draw[epi] (b1)--node[anchor=west](x01){}(b0);
   \draw[mono] (b2)--node[anchor=south](x12){}(b1);
   \draw[epi] (b2)--node[anchor=east](x23){}(b3);
   \draw[mono] (b3)--node[anchor=north](x03){}(b0);

   \draw[twoarrowlonger] (-0.8*\l, 0.8*\l)--(-0.2*\l,0.2*\l);
   \end{scope}
  \end{tikzpicture}
 \end{center}
under $F$, a construction which is again natural in $\cD$. Since both $\pcat\sdot(\cD)$ and $\cD$ are stable, to check that we have defined a double functor, it is enough to check that this assignment is compatible with vertical and horizontal sources and targets.  As explained in \cref{const:path}, the horizontal source and target vertical morphisms are given by $d_3$ and $d_2$, respectively. Recall that these maps restrict $F$ to the subdiagrams
\begin{center}
 \begin{tikzpicture}[scale=1.2]
 \begin{scope}
   \draw (1,0.3) node(a00){$00$};
\draw (2,0.3) node(a01){$01$};
\draw (2,-0.7) node(a11) {$11$};
\draw (3, -0.7) node(a12){$12$};
\draw (3, 0.3) node(a02){$02$};
\draw (3,-1.7) node (a22){$22$};
\draw[mono] (a00)--(a01);
\draw[epi] (a01)--(a11);
\draw[mono] (a11)--(a12);
\draw[mono] (a01)--(a02);
\draw[epi] (a12)--(a22);
\draw[epi] (a02)--(a12);

   \draw[twoarrowlonger] (2.2,0.1)--(2.8,-0.5);
\end{scope}

\draw (4,0.3)node(x){and};
 \begin{scope}[xshift=4cm]
   \draw (1,0.3) node(a00){$00$};
\draw (2,0.3) node(a01){$01$};
\draw (2,-0.7) node(a11) {$11$};
\draw (3, -0.7) node(a12){$13$};
\draw (3, 0.3) node(a02){$03$};
\draw (3,-1.7) node (a22){$33$};
\draw[mono] (a00)--(a01);
\draw[epi] (a01)--(a11);
\draw[mono] (a11)--(a12);
\draw[mono] (a01)--(a02);
\draw[epi] (a12)--(a22);
\draw[epi] (a02)--(a12);

   \draw[twoarrowlonger] (2.2,0.1)--(2.8,-0.5);
\end{scope}
 \end{tikzpicture}
\end{center}
and the identification with $\Verz$ restricts these two diagrams to the maps $02\epi 12$ and $03\epi 13$, respectively.  But these maps  are exactly the horizontal source and target vertical morphisms of the square indicated above in $\cD$. Similarly, the described map is also compatible with vertical source and target.

Next, we have to check that the double functor $\varepsilon_{\cD} \colon \pcat\sdot(\cD)\to \cD$ is augmented. Recall that the augmentation of $\pcat\sdot(\cD)$ is given by the image of the degeneracy map $s_0\colon \sdotn{0}(\cD)\to \sdotn{1}(\cD)$. An augmented double functor $F\colon\cW_1\to \cD$ is in the image of $s_0$ if and only if it is of the form
\begin{center}
 \begin{tikzpicture}
\begin{scope}
\draw (-0.5,0) node(a00){$F(00)$};
\draw (1.5,0) node(a01){$F(01)$};
\draw (1.5,-1.5) node(a11) {$F(11).$};
\draw[double, double distance=1pt] (a00)--(a01);
\draw[double, double distance=1pt] (a01)--(a11);
\end{scope}
 \end{tikzpicture}
\end{center}
In particular, since $F$ is augmented, the object $F(01)$ of $\cD$ to which this object of $\pcat\sdot(\cD)$ is sent is an object in the augmentation set of $\cD$. Thus the double functor $\pcat\sdot(\cD)\to \cD$ is augmented.

We already know that $\varepsilon_{\cD}$ induces isomorphisms on $\Horz$ and $\Verz$, so it remains to show that it induces a bijection on the set of squares.  Given any square
 \begin{center}
  \begin{tikzpicture}[scale=0.6]
    \def\l{2cm}
    \begin{scope}
   \draw[fill] (0,0) node (b0){$w$};
   \draw[fill] (-\l,\l) node (b2){$x$};
   \draw[fill] (-\l,0) node (b3){$z$};
   \draw[fill] (0,\l) node (b1){$y$};

   \draw[epi] (b1)--node[anchor=west](x01){}(b0);
   \draw[mono] (b2)--node[anchor=south](x12){}(b1);
   \draw[epi] (b2)--node[anchor=east](x23){}(b3);
   \draw[mono] (b3)--node[anchor=north](x03){}(b0);

   \draw[twoarrowlonger] (-0.8*\l, 0.8*\l)--(-0.2*\l,0.2*\l);
   \end{scope}
  \end{tikzpicture}
 \end{center}
in $\cD$, we have to construct an augmented double functor $G\colon \cW_3\to \cD$ which maps
 \begin{center}
  \begin{tikzpicture}[scale=0.6]
    \def\l{2cm}
    \begin{scope}
   \draw[fill] (0,0) node (b0){$13$};
   \draw[fill] (-\l,\l) node (b2){$02$};
   \draw[fill] (-\l,0) node (b3){$12$};
   \draw[fill] (0,\l) node (b1){$03$};

   \draw[epi] (b1)--node[anchor=west](x01){}(b0);
   \draw[mono] (b2)--node[anchor=south](x12){}(b1);
   \draw[epi] (b2)--node[anchor=east](x23){}(b3);
   \draw[mono] (b3)--node[anchor=north](x03){}(b0);

   \draw[twoarrowlonger] (-0.8*\l, 0.8*\l)--(-0.2*\l,0.2*\l);
   \end{scope}
  \end{tikzpicture}
 \end{center}
 to it. Define $G(11\mono 12)$ and $G(12\epi 22)$ to be the unique elements $a_{11}\mono z$ and $z\epi a_{22}$ in $\Hor{\cD}$ and $\Ver{\cD}$, respectively, with $a_{11}$, $a_{22}$ in the augmentation set $\aug$.   By stability of $G$, the following span and cospan can be completed uniquely into squares, as indicated in the diagrams
 \begin{center} 
  \begin{tikzpicture}[scale=0.6]
    \def\l{2cm}
    \begin{scope}
   \draw[fill] (0,0) node (b0){$z$};
   \draw[fill] (-\l,\l) node (b2){$x_{01}$};
   \draw[fill] (-\l,0) node (b3){$a_{11}$};
   \draw[fill] (0,\l) node (b1){$x$};

   \draw[epi] (b1)--node[anchor=west](x01){}(b0);
   \draw[mono, dashed] (b2)--node[anchor=south](x12){}(b1);
   \draw[epi, dashed] (b2)--node[anchor=east](x23){}(b3);
   \draw[mono] (b3)--node[anchor=north](x03){}(b0);

   \draw[twoarrowlonger, dashed] (-0.8*\l, 0.8*\l)--(-0.2*\l,0.2*\l);
   \end{scope}

   \draw (\l, \l) node(x){and};

       \begin{scope}[xshift=3*\l]
   \draw[fill] (0,0) node (b0){$x_{23}.$};
   \draw[fill] (-\l,\l) node (b2){$z$};
   \draw[fill] (-\l,0) node (b3){$a_{22}$};
   \draw[fill] (0,\l) node (b1){$w$};

   \draw[epi, dashed] (b1)--node[anchor=west](x01){}(b0);
   \draw[mono] (b2)--node[anchor=south](x12){}(b1);
   \draw[epi] (b2)--node[anchor=east](x23){}(b3);
   \draw[mono, dashed] (b3)--node[anchor=north](x03){}(b0);

   \draw[twoarrowlonger, dashed] (-0.8*\l, 0.8*\l)--(-0.2*\l,0.2*\l);
   \end{scope}
  \end{tikzpicture}
 \end{center}
 We set these squares to be the images, respectively, of the squares
 \begin{center}
  \begin{tikzpicture}[scale=0.6]
    \def\l{2cm}
    \begin{scope}
   \draw[fill] (0,0) node (b0){$12$};
   \draw[fill] (-\l,\l) node (b2){$01$};
   \draw[fill] (-\l,0) node (b3){$11$};
   \draw[fill] (0,\l) node (b1){$02$};

   \draw[epi] (b1)--node[anchor=west](x01){}(b0);
   \draw[mono] (b2)--node[anchor=south](x12){}(b1);
   \draw[epi] (b2)--node[anchor=east](x23){}(b3);
   \draw[mono] (b3)--node[anchor=north](x03){}(b0);

   \draw[twoarrowlonger] (-0.8*\l, 0.8*\l)--(-0.2*\l,0.2*\l);
   \end{scope}

   \draw (\l, \l) node(x){and};

       \begin{scope}[xshift=3*\l]
   \draw[fill] (0,0) node (b0){$23$};
   \draw[fill] (-\l,\l) node (b2){$12$};
   \draw[fill] (-\l,0) node (b3){$22$};
   \draw[fill] (0,\l) node (b1){$13$};

   \draw[epi] (b1)--node[anchor=west](x01){}(b0);
   \draw[mono] (b2)--node[anchor=south](x12){}(b1);
   \draw[epi] (b2)--node[anchor=east](x23){}(b3);
   \draw[mono] (b3)--node[anchor=north](x03){}(b0);

   \draw[twoarrowlonger] (-0.8*\l, 0.8*\l)--(-0.2*\l,0.2*\l);
   \end{scope}
  \end{tikzpicture}
 \end{center}
under $G$. Finally, we use the augmentation again to produce unique maps $a_{00}\mono x_{01}$ and $x_{23}\epi a_{33}$ which are declared to be the images of $00\mono 01$ and $23\epi 33$ under $G$, respectively. Thus we have defined the desired double functor $G\colon \cW_3\to \cD$. Note that there were no choices involved in the construction of $G$, so $G$ is indeed the unique preimage of the given square under the map of squares of the double functor $\varepsilon_{\cD}:\pcat\sdot\cD\to \cD$. 
\end{proof}

\section{Three examples, revisited} \label{sec:examples}

In this section, we return to the examples of 2-Segal sets described in \cref{sec:twosegex} and construct their corresponding augmented stable double categories.  

\begin{ex}
Recall the 2-Segal set $M_\bullet$ which is the nerve of a partial monoid from \cref{ex: partial monoids}.   Let us consider the image of this 2-Segal set under the path construction.

The objects of the associated double category $\mathcal P M$ are the elements of the monoid $a\in M$. The set of horizontal morphisms, which is equal to the set of vertical morphisms, is the set of composable pairs $M_2$. However, their interpretation is different: for $(a,b) \in M_2$, the corresponding horizontal arrow has source $a$ and target $a\cdot b$, i.e., it can be interpreted as multiplication on the right by $b$,
$$\begin{tikzcd}
a \arrow[tail]{r}{\cdot b} & a\cdot b.
\end{tikzcd}$$
The vertical arrow corresponding to $(a,b)\in M_2$ has target $b$ and source $a\cdot b$, so it can be thought of as multiplication on the left by $a$, but with the arrow pointing in the other direction.
$$\begin{tikzcd}
a\cdot b \arrow[two heads, swap]{d}{a \cdot} \\  b.
\end{tikzcd}$$
The set of squares is $M_3$ and reflects associativity of the multiplication: for $(a,b,c)\in M_3$, its associated square is
$$\begin{tikzcd}
a\cdot b \arrow[two heads, swap]{d}{a \cdot} \arrow[tail]{r}{\cdot c} & (a\cdot b) \cdot c = a\cdot (b\cdot c) \arrow[two heads]{d}{a \cdot}  \\
b \arrow[tail, swap]{r}{\cdot c} & b\cdot c.
\end{tikzcd}$$
The double category $\pcat M$ is pointed by the unit $1\in M$, since for every $a\in M$, the elements $(1,a)\in M_2$ and $(a,1)\in M_2$, which can be visualized as
$$\begin{tikzcd}
1 \arrow[tail]{r}{\cdot a} & 1\cdot a = a
\end{tikzcd}
\hspace{1cm}\mbox{and}\hspace{1cm}
\begin{tikzcd}
a=a\cdot 1 \arrow[two heads, swap]{d}{a \cdot} \\  1,
\end{tikzcd}$$
exhibit 1 as initial with respect to the horizontal category and terminal with respect to the vertical category.  Finally, stability is again given by associativity, since both
$$\begin{tikzcd}
a\cdot b \arrow[two heads, swap]{d}{a \cdot} \arrow[tail]{r}{\cdot c} & (a\cdot b) \cdot c   \\
b 
\end{tikzcd}
\quad\mbox{and}\quad
\begin{tikzcd}
\mbox{} & a\cdot (b\cdot c) \arrow[two heads]{d}{a \cdot}  \\
b \arrow[tail, swap]{r}{\cdot c} & b\cdot c
\end{tikzcd}$$
can be completed uniquely to a square as above.
\end{ex}

\begin{ex}
Let us now revisit the 2-Segal set $2\mathrm{Cob}^{\leq g}$ from \cref{ex:cob}.

The objects of the associated double category are elements in $(2\mathrm{Cob}^{\leq g})_1$, which are diffeomorphism classes of 2-dimensional cobordisms $\Sigma$ with genus at most $g$. Horizontal and vertical morphisms are elements in $(2\mathrm{Cob}^{\leq g})_2$, which are given by diffeomorphism classes of cobordisms $\Sigma$ with genus at most $g$, together with a choice of decomposition $\Sigma \cong  \Sigma_1\amalg_{N} \Sigma_2$, where  $N=\partial_{\textrm{out}} \Sigma_1 = \partial_{\textrm{in}} \Sigma_2$. We view $\Sigma$ as a horizontal morphism and as a vertical morphism via
$$\begin{tikzcd}[column sep=large]
\Sigma_1 \arrow[tail]{r}{(-)\amalg_{N} \Sigma_2} & \Sigma_1\amalg_{N} \Sigma_2\cong \Sigma 
\end{tikzcd}
\quad\text{and}\quad
\begin{tikzcd}[column sep=large]
\Sigma \cong \Sigma_1 \amalg_{N} \Sigma_2 \arrow[two heads]{d}{\Sigma_1\amalg_{N} (-)}  \\ \Sigma_2,
\end{tikzcd}$$
respectively.

The augmentation is given by cylinders on 1-dimensional closed manifolds viewed as trivial cobordisms, since the set of such cylinders is the image of $(2\mathrm{Cob}^{\leq g})_0$ under the degeneracy map $s_0$. 

Given an object in the double category $\Sigma\in (2\mathrm{Cob}^{\leq g})_1$, there is a unique object in the augmentation, namely the cylinder on its incoming boundary, together with a unique horizontal morphism to $\Sigma$:
$$\begin{tikzcd}[column sep=large]
\partial_{\textrm{in}}\Sigma\times[0,1] \arrow[tail]{r}{(-)\amalg_{\partial_{\textrm{in}}\Sigma} \Sigma} & (\partial_{\textrm{in}}\Sigma\times[0,1]) \amalg_{\partial_{\textrm{in}}\Sigma} \Sigma \cong \Sigma.
\end{tikzcd}$$
For example, for the pair of pants, we have the horizontal morphism
$$
\raisebox{-0.5\height}{\includegraphics{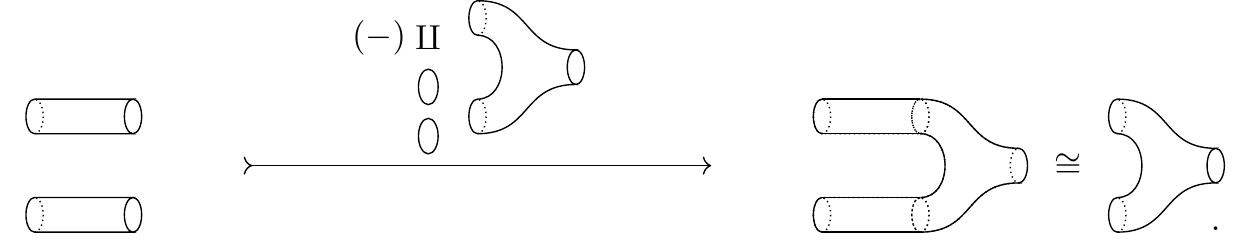}}
$$
Similarly, there is a unique object in the augmentation, namely the cylinder on its outgoing boundary, together with a unique vertical morphism from $\Sigma$:
$$\begin{tikzcd}[column sep=large]
\Sigma \cong \Sigma \amalg_{\partial_{\textrm{out}}\Sigma} (\partial_{\textrm{out}}\Sigma \times[0,1])\arrow[two heads]{d}{\Sigma \,\amalg_{\partial_{\textrm{out}}\Sigma} (-)}  \\ \partial_{\textrm{out}} \Sigma \times[0,1].
\end{tikzcd}$$
In the example of the pair of pants, the vertical morphism is (drawn 
horizontally)
$$
\raisebox{-0.5\height}{\includegraphics{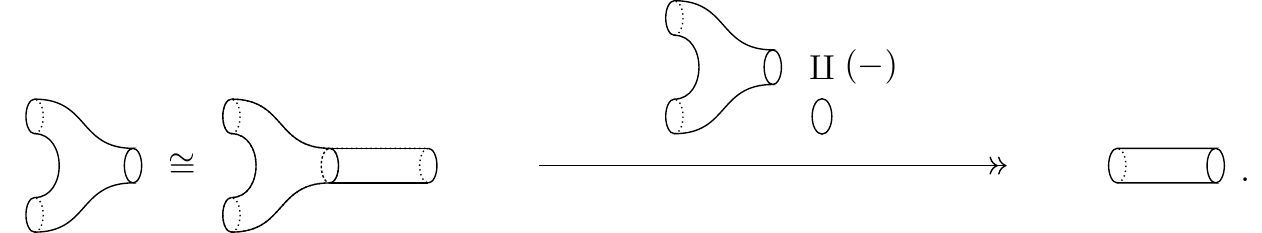}}
$$
\end{ex}

\begin{ex} 
Finally, we apply our construction to the 2-Segal set $X$ associated to a graph $G$ as described in \cref{ex:graph}.

An element in $X_2$, such as 
\[
\begin{tikzpicture}
\def\l{1cm}
\draw (\l,0) node [vnode=$a$](a){};
\draw (2*\l,0) node [vnode=$b$](b){};
\draw (\l,-0.3*\l) node[vellipsefirstone=0.8*\l]{};
\draw (2*\l, -0.3*\l) node[vellipsesecondone=0.8*\l]{};
\draw (a)--(b);
\end{tikzpicture}
\]
represents a horizontal morphism
\begin{center}
\begin{tikzpicture}
\def\l{1cm}
\draw (\l,0) node [vnode=$a$](a){};
\draw (\l,-0.3*\l) node[vellipsefirstone=0.8*\l]{};

\draw[mono] (1.5*\l,-0.3*\l) -- (5.5*\l,-0.3*\l);

\begin{scope}[yshift=0.8*\l, xshift=3*\l]
\draw (0,0) node [vnode=$a$](a){};
\draw (\l,0) node [vnode=$b$](b){};
\draw (0,-0.3*\l) node[vellipsefirstone=0.8*\l]{};
\draw (\l, -0.3*\l) node[vellipsesecondone=0.8*\l]{};
\draw (a)--(b);
\end{scope}

\begin{scope}[xshift=5*\l]
\draw (\l,0) node [vnode=$a$](a){};
\draw (2*\l,0) node [vnode=$b.$](b){};
\draw (a)--(b);
\draw (1.5*\l, -0.3*\l) node[vellipsefirsttwo=0.8*\l]{};
\end{scope}

\end{tikzpicture}
\end{center}

The same element gives the vertical morphism
\begin{center}
\begin{tikzpicture}[scale=0.75]
\def\l{1cm}
\def\w{6cm}
\def\h{-5cm}

\begin{scope}
\draw (\l,0) node [vnode=$a$](a){};
\draw (2*\l,0) node [vnode=$b$](b){} ++(0.5*\l,-0.5*\l) node (t1) {};
\draw (1.5*\l,-0.3*\l) node[vellipsefirsttwo=0.8*\l]{};
\draw (a)-- node (l11) {} (b);
\path (l11) ++(0,-\l) node (l1) {};
\end{scope}

\begin{scope}[shift={(0.5, \h)}]
\draw (\l,0) node [vnode=$b.$](b){} node[anchor=south, yshift=0.2cm] (l2) {} ++(0.5*\l,-0.5*\l) node (b1) {};
\draw (\l,-0.3*\l) node[vellipsefirstone=0.8*\l]{};
\end{scope}

\draw[epi] (l1) -- (l2);
\begin{scope}[shift={(-0.2*\l, 0.5*\h)}]
\draw (0,0) node [vnode=$a$](a){};
\draw (\l,0) node [vnode=$b$](b){};
\draw (0,-0.3*\l) node[vellipsefirstone=0.8*\l]{};
\draw (\l, -0.3*\l) node[vellipsesecondone=0.8*\l]{};
\draw (a)--(b);
\end{scope}
\end{tikzpicture}
\end{center}

To give an idea of the different way squares can be formed from such morphisms, we give a few examples.  A 3-simplex such as
\[
\begin{tikzpicture}
\def\l{1cm}
\draw (\l,0) node [vnode=$a$](a){};
\draw (2*\l,0) node [vnode=$b$](b){};
\draw (3*\l,0) node [vnode=$c$](c){};
\draw (\l,-0.3*\l) node[vellipsefirstone=0.8*\l]{};
\draw(2*\l, -0.3*\l) node[vellipsesecondone=0.8*\l]{};
\draw (3*\l, -0.3*\l) node[vellipsethirdone=0.8*\l]{};
\draw (a)--(b);
\draw (b)--(c);
\end{tikzpicture}
\]
gives rise to a square

\begin{center}
\begin{tikzpicture}[scale=0.75]
\def\l{1cm}
\def\w{6cm}
\def\h{-5cm}

\begin{scope}
\draw (\l,0) node [vnode=$a$](a){};
\draw (2*\l,0) node [vnode=$b$](b){} ++(0.5*\l,-0.5*\l) node (t1) {};
\draw (1.5*\l,-0.3*\l) node[vellipsefirsttwo=0.8*\l]{};
\draw (a)-- node (l11) {} (b);
\path (l11) ++(0,-\l) node (l1) {};
\end{scope}

\begin{scope}[xshift = \w]
\draw (\l,0) node [vnode=$a$](a){} ++(-0.8*\l,-0.5*\l) node (t2) {};
\draw (2*\l,0) node [vnode=$b$](b){} ++(0,-\l) node (r1) {};
\draw (3*\l,0) node [vnode=$c$](c){};
\draw (2*\l, -0.3*\l) node[vellipsefirstthree=0.8*\l]{};
\draw (a)--(b);
\draw (b)--(c);
\end{scope}

\draw[mono] (t1) -- node (T) {} (t2);
\begin{scope}[shift={(0.5*\w++0.5*\l, 1.2*\l)}]
\draw (0,0) node [vnode=$a$](a){};
\draw (\l,0) node [vnode=$b$](b){};
\draw (2*\l,0) node [vnode=$c$](c){};
\draw (b)--(c);
\draw (a)--(b);
\draw (0.5*\l,-0.3*\l) node[vellipsefirsttwo=0.8*\l]{};
\draw (2*\l, -0.3*\l) node[vellipsesecondone=0.8*\l]{};
\end{scope}

\begin{scope}[shift={(0.5, \h)}]
\draw (\l,0) node [vnode=$b$](b){} node[anchor=south, yshift=0.2cm] (l2) {} ++(0.5*\l,-0.5*\l) node (b1) {};
\draw (\l,-0.3*\l) node[vellipsefirstone=0.8*\l]{};
\end{scope}

\draw[epi] (l1) -- (l2);
\begin{scope}[shift={(-0.2*\l, 0.5*\h)}]
\draw (0,0) node [vnode=$a$](a){};
\draw (\l,0) node [vnode=$b$](b){};
\draw (0,-0.3*\l) node[vellipsefirstone=0.8*\l]{};
\draw (\l, -0.3*\l) node[vellipsesecondone=0.8*\l]{};
\draw (a)--(b);
\end{scope}

\begin{scope}[shift={(\w ++ 0.5cm, \h)}]
\def\l{1cm}
\draw (\l,0) node [vnode=$b$](b){} ++(-0.5*\l,-0.5*\l) node (b2) {};
\draw (2*\l,0) node [vnode=$c.$](c){};
\draw (b)-- node[anchor=south, yshift=0.2cm] (r2) {} (c);
\draw (1.5*\l,-0.3*\l) node[vellipsefirsttwo=0.8*\l]{};
\end{scope}

\draw[epi] (r1) -- node (R) {} (r2);
\path (R) ++ (0.5cm, 0) node (R1) {};
\begin{scope}[shift={(R1)}]
\draw (0,0) node [vnode=$a$](a){};
\draw (\l,0) node [vnode=$b$](b){};
\draw (2*\l,0) node [vnode=$c$](c){};
\draw (b)--(c);
\draw (a)--(b);
\draw (0,-0.3*\l) node[vellipsefirstone=0.8*\l]{};
\draw (1.5*\l, -0.3*\l) node[vellipsesecondtwo=0.8*\l]{};
\end{scope}

\draw[mono] (b1) -- node (B) {} (b2);
\path (B) ++ (-0.5, -1) node (B1) {};
\begin{scope}[shift={(B1)}]
\draw (0,0) node [vnode=$b$](a){};
\draw (\l,0) node [vnode=$c$](b){};
\draw (0,-0.3*\l) node[vellipsefirstone=0.8*\l]{};
\draw (\l, -0.3*\l) node[vellipsesecondone=0.8*\l]{};
\draw (a)--(b);
\end{scope}

\end{tikzpicture}
\end{center}

However, squares can look fairly different even if we permute the sets in the partition. For example, the 3-simplex
\[ 
\begin{tikzpicture}
\def\l{0.8cm}
\draw (\l,0) node [vnode=$a$](a){};
\draw (2*\l,0) node [vnode=$b$](b){};
\draw (3*\l,0) node [vnode=$c$](c){};
\draw (\l,-0.3*\l) node[vellipsethirdone=0.8*\l, ,label={below:{$3$}}]{};
\draw (2*\l, -0.3*\l) node[vellipsefirstone=0.8*\l,label={below:{$1$}}]{};
\draw (3*\l, -0.3*\l) node[vellipsesecondone=0.8*\l,label={below:{$2$}}]{};
\draw (a)--(b);
\draw (b)--(c);
\end{tikzpicture}
\]
corresponds instead to the square
\begin{center}
\begin{tikzpicture}[scale=0.8]
\def\l{1cm}
\def\w{5*\l}
\def\h{3.5*\l}


\begin{scope}
\draw (\l,0) node [vnode=$b$](b){};
\draw (2*\l,0) node [vnode=$c$](c){};
\draw (b)--(c);
\draw (1.5*\l,-0.3*\l) node[vellipsefirsttwo=0.8*\l](ulc){};
\end{scope}

\begin{scope}[xshift=\w]
\draw (\l,0) node [vnode=$a$](a){};
\draw (2*\l,0) node [vnode=$b$](b){};
\draw (3*\l,0) node [vnode=$c$](c){};
\draw (a)--(b);
\draw (b)--(c);
\draw (2*\l,-0.3*\l) node[vellipsefirstthree=0.8*\l](urc){};
\end{scope}

\begin{scope}[yshift=-\h, xshift=0.5*\l]
\draw (\l,0) node [vnode=$c$](c){};
\draw (\l,-0.3*\l) node[vellipsefirstone=0.8*\l](llc){};
\end{scope}

\begin{scope}[xshift={\w+0.5*\l}, yshift=-\h]
\draw (\l,0) node [vnode=$a$](a){};
\draw (2*\l,0) node [vnode=$c$](c){};
\draw (1.5*\l,-0.3*\l) node[vellipsefirsttwo=0.8*\l](lrc){};
\end{scope}

\draw[mono, shorten >=0.1cm, shorten <=0.1cm] (ulc)--node[above](d1){}(urc);
\draw[mono, shorten >=0.1cm, shorten <=0.1cm] (llc)--node[below](d0){}(lrc);
\draw[epi, shorten >=0.1cm, shorten <=0.1cm] (urc)--node[right](d2){}(lrc);
\draw[epi, shorten >=0.1cm, shorten <=0.1cm] (ulc)--node[left](d3){}(llc);

\begin{scope}[shift={(-0.3*\w,-0.5*\h)}]
\draw (\l,0) node [vnode=$b$](b){};
\draw (2*\l,0) node [vnode=$c$](c){};
\draw (b)--(c);
\draw (1*\l,-0.3*\l) node[vellipsefirstone=0.8*\l]{};
\draw (2*\l, -0.3*\l) node[vellipsesecondone=0.8*\l]{};
\end{scope}

\begin{scope}[shift={(1.3*\w,-0.5*\h)}]
\draw (\l,0) node [vnode=$a$](a){};
\draw (2*\l,0) node [vnode=$b$](b){};
\draw (3*\l,0) node [vnode=$c$](c){};
\draw (a)--(b);
\draw (b)--(c);
\draw (2*\l,-0.3*\l) node[vellipsefirstone=0.8*\l]{};
\draw (1*\l,-0.3*\l) node[vellipsesecondone=0.8*\l]{};
\draw (3*\l,-0.3*\l) node[vellipsesecondone=0.8*\l]{};
\end{scope}

\begin{scope}[shift={(0.35*\w, 0.3*\h)}]
\draw (\l,0) node [vnode=$a$](a){};
\draw (2*\l,0) node [vnode=$b$](b){};
\draw (3*\l,0) node [vnode=$c$](c){};
\draw (a)--(b);
\draw (b)--(c);
\draw (2.5*\l,-0.3*\l) node[vellipsefirsttwo=0.8*\l]{};
\draw (\l, -0.3*\l) node[vellipsesecondone=0.8*\l]{};
\end{scope}

\begin{scope}[xshift={0.5*\w}, yshift=-1.3*\h]
\draw (\l,0) node [vnode=$a$](a){};
\draw (2*\l,0) node [vnode=$c$](c){};
\draw (2*\l,-0.3*\l) node[vellipsefirstone=0.8*\l]{};
\draw (1*\l,-0.3*\l) node[vellipsesecondone=0.8*\l]{};
\end{scope}

\end{tikzpicture}
\end{center}

Lastly, we give an example of a degenerate 3-simplex 
\[
\begin{tikzpicture}
\def\l{0.8cm}
\draw (\l,0) node [vnode=$a$](a){};
\draw (2*\l,0) node [vnode=$b$](b){};
\draw (3*\l,0) node [vnode=$c$](c){};
\draw (4*\l,-0.2*\l) node{$\varnothing$};
\draw (\l,-0.3*\l) node[vellipsefirstone=0.8*\l]{};
\draw (2.5*\l, -0.3*\l) node[vellipsesecondtwo=0.8*\l]{};
\draw (4*\l, -0.3*\l) node[vellipsethirdone=0.8*\l]{};
\draw (a)--(b);
\draw (b)--(c);
\end{tikzpicture}
\]
which gives rise to a square which represents an identity in the horizontal category.

\begin{center}
\begin{tikzpicture}[scale=0.8]
\def\l{1cm}
\def\w{6*\l}
\def\h{3.5*\l}


\begin{scope}
\draw (\l,0) node [vnode=$a$](a){};
\draw (2*\l,0) node [vnode=$b$](b){};
\draw (3*\l,0) node [vnode=$c$](c){};
\draw (a)--(b);
\draw (b)--(c);
\draw (2*\l,-0.3*\l) node[vellipsefirstthree=0.8*\l](ulc){};
\end{scope}

\begin{scope}[xshift=\w]
\draw (\l,0) node [vnode=$a$](a){};
\draw (2*\l,0) node [vnode=$b$](b){};
\draw (3*\l,0) node [vnode=$c$](c){};
\draw (a)--(b);
\draw (b)--(c);
\draw (2*\l,-0.3*\l) node[vellipsefirstthree=0.8*\l](urc){};
\end{scope}

\begin{scope}[yshift=-\h, xshift=0.5*\l]
\draw (\l,0) node [vnode=$b$](b){};
\draw (2*\l,0) node [vnode=$c$](c){};
\draw (b)--(c);
\draw (1.5*\l,-0.3*\l) node[vellipsefirsttwo=0.8*\l](llc){};
\end{scope}

\begin{scope}[xshift={\w+0.5*\l}, yshift=-\h]
\draw (\l,0) node [vnode=$b$](b){};
\draw (2*\l,0) node [vnode=$c.$](c){};
\draw (b)--(c);
\draw (1.5*\l,-0.3*\l) node[vellipsefirsttwo=0.8*\l](lrc){};
\end{scope}

\draw[mono, shorten >=0.1cm, shorten <=0.1cm] (ulc)--node[above](d1){}(urc);
\draw[mono, shorten >=0.1cm, shorten <=0.1cm] (llc)--node[below](d0){}(lrc);
\draw[epi, shorten >=0.1cm, shorten <=0.1cm] (urc)--node[right](d2){}(lrc);
\draw[epi, shorten >=0.1cm, shorten <=0.1cm] (ulc)--node[left](d3){}(llc);

\begin{scope}[shift={(-0.3*\w,-0.5*\h)}]
\draw (\l,0) node [vnode=$a$](a){};
\draw (2*\l,0) node [vnode=$b$](b){};
\draw (3*\l,0) node [vnode=$c$](c){};
\draw (\l,-0.3*\l) node[vellipsefirstone=0.8*\l]{};
\draw (2.5*\l, -0.3*\l) node[vellipsesecondtwo=0.8*\l]{};
\draw (a)--(b);
\draw (b)--(c);
\end{scope}

\begin{scope}[shift={(1.3*\w,-0.5*\h)}]
\draw (\l,0) node [vnode=$a$](a){};
\draw (2*\l,0) node [vnode=$b$](b){};
\draw (3*\l,0) node [vnode=$c$](c){};
\draw (a)--(b);
\draw (b)--(c);
\draw (1*\l,-0.3*\l) node[vellipsefirstone=0.8*\l]{};
\draw (2.5*\l,-0.3*\l) node[vellipsesecondtwo=0.8*\l]{};
\end{scope}

\begin{scope}[shift={(0.35*\w, 0.4*\h)}]
\draw (\l,0) node [vnode=$a$](a){};
\draw (2*\l,0) node [vnode=$b$](b){};
\draw (3*\l,0) node [vnode=$c$](c){};
\draw (4*\l,-0.2*\l) node{$\varnothing$};
\draw (2*\l,-0.3*\l) node[vellipsefirstthree=0.7*\l]{};
\draw (4*\l, -0.3*\l) node[vellipsesecondone=0.7*\l]{};
\draw (a)--(b);
\draw (b)--(c);
\end{scope}

\begin{scope}[xshift={0.5*\w}, yshift=-1.3*\h]
\draw (\l,0) node [vnode=$b$](b){};
\draw (2*\l,0) node [vnode=$c$](c){};
\draw (3*\l,-0.2*\l) node{$\varnothing$};
\draw (1.5*\l,-0.3*\l) node[vellipsefirsttwo=0.8*\l]{};
\draw (3*\l, -0.3*\l) node[vellipsesecondone=0.8*\l]{};
\draw (b)--(c);
\end{scope}

\end{tikzpicture}
\end{center}
\end{ex}

\bibliographystyle{abbrv}
\bibliography{ref}

\end{document}